\providecommand{\U}[1]{\protect\rule{.1in}{.1in}}
\newtheorem{theorem}{Theorem}
\theoremstyle{plain}
\newtheorem{proposition}{Proposition}[section]
\newtheorem{remark}{Remark}[section]
\numberwithin{equation}{section}
\numberwithin{theorem}{section}
\begin{document}
\title[Spectral Study of the $X_{1}$-Jacobi and Jacobi Differential Expressions]{A Spectral Study of the Second-Order Exceptional $X_{1}$-Jacobi Differential
Expression and a Related Non-classical Jacobi Differential Expression}
\author{Constanze Liaw}
\address{Department of Mathematics, Baylor University, One Bear Place \#97328, Waco, TX 76798-7328}
\email{Constanze\_Liaw@baylor.edu, Lance\_Littlejohn@baylor.edu,
Jessica\_Stewart@baylor.edu, Quinn\_Wicks@baylor.edu}
\author{Lance L.~Littlejohn}
\author{Jessica Stewart}
\author{Quinn Wicks}
\thanks{Constanze Liaw is partially supported by the NSF grant DMS-1261687.}
\date{}
\subjclass{Primary 33C65, 34B20, 47B25; Secondary 34B20, 47B25}
\keywords{Orthogonal polynomials, spectral theory, Glazman-Krein-Naimark theory, $X_{1}%
$-Jacobi polynomials, left-definite theory}

\begin{abstract}
The exceptional $X_{1}$-Jacobi differential expression is a second-order
ordinary differential expression with rational coefficients; it was discovered
by G\'{o}mez-Ullate, Kamran and Milson in 2009. In their work, they showed
that there is a sequence of polynomial eigenfunctions $\left\{\widehat{P}%
_{n}^{(\alpha,\beta)}\right\}_{n=1}^{\infty}$ called the exceptional $X_{1}$-Jacobi
polynomials. There is no exceptional $X_{1}$-Jacobi polynomial of degree zero.
These polynomials form a complete orthogonal set in the weighted Hilbert space
$L^{2}((-1,1);\widehat{w}_{\alpha,\beta}),$ where $\widehat{w}_{\alpha,\beta}$
is a positive rational weight function related to the classical Jacobi weight.
Among other conditions placed on the parameters $\alpha$ and $\beta,$ it is
required that $\alpha,\beta>0.$ In this paper, we develop the spectral theory
of this expression in $L^{2}((-1,1);\widehat{w}_{\alpha,\beta})$. We also
consider the spectral analysis of the `extreme' non-exceptional case, namely
when $\alpha=0$. In this case, the polynomial solutions are the non-classical
Jacobi polynomials $\left\{  P_{n}^{(-2,\beta)}\right\}  _{n=2}^{\infty}.$ We
study the corresponding Jacobi differential expression in several Hilbert
spaces, including their natural $L^{2}$ setting and a certain Sobolev space
$S$ where the full sequence $\left\{  P_{n}^{(-2,\beta)}\right\}
_{n=0}^{\infty}$ is studied and a careful spectral analysis of the Jacobi
expression is carried out.

\end{abstract}
\maketitle

\section{Introduction}

In 2009, G\'{o}mez-Ullate, Kamran, and Milson \cite{KMUG} (see also
\cite{KMUG1, KMUG2, KMUG3, KMUG4, KMUG5, KMUG6}) characterized all polynomial
sequences $\left\{  p_{n}\right\}  _{n=1}^{\infty},$ with $\deg p_{n}=n \ge1$,
which satisfy the following conditions:

\begin{enumerate}
\item[(i)] there exists a second-order differential expression%
\[
\ell\lbrack y](x)=a_{2}(x)y^{\prime\prime}(x)+a_{1}(x)y^{\prime}%
(x)+a_{0}(x)y(x),
\]
and a sequence of complex numbers $\{\lambda_{n}\}_{n=1}^{\infty}$ such that
$y=p_{n}(x)$ is a solution of
\[
\ell\lbrack y](x)=\lambda_{n}y(x)\quad(n\in\mathbb{N});
\]
each coefficient $a_{i}(x),$ $i=0,1,2,$ is a function of the independent
variable $x$ and does not depend on the degree of the polynomial eigenfunctions;

\item[(ii)] if $C$ is any non-zero constant, $y(x)\equiv C$ is \textit{not} a
solution of $\ell\lbrack y](x)=\lambda y(x)$ for any $\lambda\in\mathbb{C};$

\item[(iii)] there exists an open interval $I$ and a positive Lebesgue
measurable function $w(x)$ $(x\in I)$ such that
\[
\int_{I}p_{n}(x)p_{m}w(x)dx=K_{n}\delta_{n,m},
\]
where $K_{n}>0$ for each $n\in\mathbb{N}$ and $\delta_{n,m}$ is the standard
Kronecker delta symbol; that is to say, $\{p_{n}\}_{n=1}^{\infty}$ is
orthogonal with respect to $w$ on the interval $I;$

\item[(iv)] all moments $\{\mu_{n}\}_{n=0}^{\infty}$ of $w,$ defined by
\[
\mu_{n}=\int_{I}x^{n}w(x)dx\quad(n=0,1,2,\ldots),
\]
exist and are finite.
\end{enumerate}

Up to a complex linear change of variable, the authors in \cite{KMUG} show
that the only solutions to this classification problem are the
\textit{exceptional} $X_{1}$-Laguerre and $X_{1}$-Jacobi polynomials. Their
results are spectacular and remarkable; indeed, it was believed, due to the
`Bochner' classification (see \cite{Bochner}, \cite{Lesky} and \cite{Routh}),
that among the class of all orthogonal polynomials, only the Hermite,
Laguerre, and Jacobi polynomials, satisfy second-order differential equations
and are orthogonal with respect to a positive-definite inner product of the
form%
\[
(p,q)=\int_{\mathbb{R}}p(x)\overline{q}(x)W(x)dx.
\]
We remark that two excellent texts dealing with the subject of orthogonal
polynomials are the classical texts of \cite{Chihara} and \cite{Szego}.

Even though the authors in \cite{KMUG} introduce the notion of exceptional
polynomials via Sturm-Liouville theory, the path that they followed to their
discovery was motivated by their interest in quantum mechanics, specifically
with their intent to extend exactly solvable and quasi-exactly solvable
potentials beyond the Lie algebraic setting. It is important to note as well
that the work in \cite{KMUG} was not originally motivated by orthogonal
polynomials although they set out to construct potentials that would be
solvable by polynomials which fall outside the realm of the classical theory
of orthogonal polynomials. To further note, their work was inspired by the
paper of Post and Turbiner \cite{Post-Turbiner} who formulated a
\textit{generalized Bochner problem} of classifying the linear differential
operators in one variable leaving invariant a given vector space of polynomials.

The $X_{1}$-Laguerre and $X_{1}$-Jacobi polynomials, as well as subsequent
generalizations, are exceptional in the sense that they start at degree $\ell$
$(\ell\geq1)$ instead of degree $0$, thus avoiding the restrictions of the
Bochner classification, but still satisfy second-order differential equations
of spectral type. Reformulation within the framework of one-dimensional
quantum mechanics and shape invariant potentials is considered by various
other authors; for example, see \cite{Odake-Sasaki} and \cite{Quesne}.
Furthermore, the two second-order differential equations that they discover in
their $X_{1}$ classification are important examples illustrating the
Stone-von\ Neumann theory \cite[Chapter 12]{DS} and the Glazman-Krein-Naimark
theory (see \cite{Akhiezer-Glazman} and \cite[Section 18]{Naimark}) of
differential operators.

In this paper, we study the exceptional $X_{1}$-Jacobi expression for all
possible parameter choices in various Hilbert spaces. We also consider this
expression, the corresponding orthogonal polynomials and the self-adjoint
theory for the \textit{extreme} choice of parameters $\alpha=0$ or $\beta=0$.
The corresponding operators and their spectral analysis are not captured by
the generalized Bochner classification and we apply a multitude of techniques
to accomplish our goals.

The contents of this paper are as follows. In Section \ref{Section Two}, we
introduce the exceptional $X_{1}$-Jacobi polynomials and differential
expression and briefly review properties of these polynomials. Section
\ref{Section Three} deals with standard properties of the exceptional $X_{1}%
$-Jacobi differential expression $\widehat{\ell}_{\alpha,\beta}[\cdot]$ in its
natural setting $L^{2}((-1,1);\widehat{w}_{\alpha,\beta}),$ where
$\widehat{w}_{\alpha,\beta}$ is the orthogonalizing weight function for the
exceptional $X_{1}$-Jacobi polynomials. This leads to the construction, in
Section \ref{Section Four}, of a certain self-adjoint operator $\widehat{T}%
_{\alpha,\beta},$ generated by $\widehat{\ell}_{\alpha,\beta}[\cdot],$ in
$L^{2}((-1,1);\widehat{w}_{\alpha,\beta})$ (see Theorem \ref{t-abge1}). In
Section \ref{Section Five}, we begin our analysis of the `extreme' case
$\alpha=0.$ This choice gets us closer to the realm of classical orthogonal
polynomials; indeed the weight function in this case simplifies to
$w_{-2,\beta}(x)=(1-x)^{-2}(1+x)^{\beta},$ which is the weight function for
the non-classical Jacobi polynomials $\left\{  P_{n}^{(-2,\beta)}\right\}  .$
Various important facts about the associated Jacobi differential expression,
which we denote by $m_{-2,\beta}[\cdot],$ are discussed in Section
\ref{Section Six}. These properties are used in Section \ref{Section Seven} to
construct the self-adjoint operator $T_{-2,\beta},$ generated by $m_{-2,\beta
}[\cdot],$ having the Jacobi polynomials $\left\{  P_{n}^{(-2,\beta)}\right\}
_{n=2}^{\infty}$ as eigenfunctions; see Theorem
\ref{Self-Adjointness of Special Jacobi Operator}. We remark that it is not
possible for the Jacobi polynomials $P_{n}^{(-2,\beta)}$ of degrees $0$ and
$1$ to belong to $L^{2}((-1,1);w_{-2,\beta}).$ Also, in Section
\ref{Section Seven}, we show (Theorem \ref{positivity theorem}) that
$T_{-2,\beta}$ is bounded below by the identity operator $I$ in $L^{2}%
((-1,1);w_{-2,\beta}).$ This result will be critical for our analysis in the
last two sections of the paper. Section \ref{Section Eight} gives a short
description of abstract left-definite theory, a subject that is instrumental
in the last two sections. Kwon and Littlejohn \cite{Kwon-Littlejohn}
discovered a Sobolev inner product in which the \textit{entire }Jacobi
sequence $\left\{  P_{n}^{(-2,\beta)}\right\}  _{n=0}^{\infty}$ is orthogonal
but, for reasons that will be made clearer later, we must require $\beta
\neq0.$ This inner product and properties of the corresponding Sobolev space
$S$ are discussed in Section \ref{Section Nine}. Lastly, in Section
\ref{Section Ten}, we construct (Theorem \ref{Self-Adjoint Operator in S}) a
self-adjoint operator $T,$ generated by the differential expression
$m_{-2,\beta}[\cdot]$, having the Jacobi polynomials $\left\{  P_{n}%
^{(-2,\beta)}\right\}  _{n=0}^{\infty}$ as eigenfunctions. This construction,
essentially, uses all of the results proven in the previous sections.

\section{The Exceptional $X_{1}$-Jacobi Polynomials\label{Section Two}}

The exceptional $X_{1}$-Jacobi differential expression is defined to be%
\begin{equation}
\widehat{\ell}_{\alpha,\beta}[y](x):=(x^{2}-1)y^{\prime\prime}(x)+2a\left(
\frac{1-bx}{b-x}\right)  \left(  (x-c)y^{\prime}(x)-y(x)\right)  \quad
(x\in(-1,1)), \label{X_1JacobiExpression}%
\end{equation}
where%
\begin{equation}
\alpha,\beta\in(-1,\infty),\quad\alpha\neq\beta,\quad\text{and }%
\quad\text{\textrm{sgn}}(\alpha)=\text{\textrm{sgn}}(\beta),
\label{Parameter Conditions1}%
\end{equation}
and%
\begin{equation}
a:=\frac{1}{2}(\beta-\alpha)\,,\quad b:=\frac{\beta+\alpha}{\beta-\alpha
}\,,\quad c:=b+\frac{1}{a}=\frac{\beta+\alpha+2}{\beta-\alpha}\,.
\label{Parameter Conditions2}%
\end{equation}
Notice that, from (\ref{Parameter Conditions1}), that it is not possible for
$\alpha=0$ or $\beta=0.$ Later, in Section \ref{Section Five} and onwards, we
do allow for $\alpha=0$ or $\beta=0.$

Observe that the conditions in (\ref{Parameter Conditions1}) imply that
$\left\vert b\right\vert >1.$ Indeed suppose, to the contrary, that
$\left\vert b\right\vert \leq1;$ that is to say,
\[
-1\leq\frac{\beta+\alpha}{\beta-\alpha}\leq1.
\]
If $\alpha>\beta,$ we see that the above inequality yields $-\beta+\alpha
\geq\beta+\alpha\geq\beta-\alpha,$ which in turn implies $\beta\leq0$ and
$\alpha\geq0.$ Since the case $\alpha=0$ or $\beta=0$ is not possible, we see
that \textrm{sgn}$(\beta)=-\mathrm{sgn}(\alpha),$ contradicting
(\ref{Parameter Conditions1}). The case $\alpha<\beta$ can be dealt with similarly.

The exceptional $X_{1}$-Jacobi polynomials $\left\{  \widehat{P}_{n}%
^{(\alpha,\beta)}\right\}  _{n=1}^{\infty}$ are eigenfunctions of
$\widehat{\ell}_{\alpha,\beta}[\cdot];$ specifically%
\[
\widehat{\ell}_{\alpha,\beta}[\widehat{P}_{n}^{(\alpha,\beta)}%
](x)=(n-1)(\alpha+\beta+n)\widehat{P}_{n}^{(\alpha,\beta)}(x)\quad
(n\in\mathbb{N}).
\]
Moreover, they show that $\left\{  \widehat{P}_{n}^{(\alpha,\beta)}\right\}
_{n=1}^{\infty}$ forms a complete orthogonal set in the Hilbert space
\[
L^{2}((-1,1);\widehat{w}_{\alpha,\beta}):=\left\{  f:(-1,1)\rightarrow
\mathbb{C}\mid f\text{ is Lebesgue measurable and }\left\Vert f\right\Vert
_{\widehat{w}_{\alpha,\beta}}<\infty\right\}  ,
\]
with norm and inner product defined, respectively, by
\begin{equation}
\left\Vert f\right\Vert _{\widehat{w}_{\alpha,\beta}}:=\left(  \int_{-1}%
^{1}\left\vert f(x)\right\vert ^{2}\widehat{w}_{\alpha,\beta}(x)\,dx\right)
^{1/2}\quad(f\in L^{2}((-1,1);\widehat{w}_{\alpha,\beta})) \label{X_1Norm}%
\end{equation}
and%
\begin{equation}
(f,g)_{\widehat{w}_{\alpha,\beta}}:=\int_{-1}^{1}f(x)\overline{g}%
(x)\widehat{w}_{\alpha,\beta}(x)\,dx\quad(f,g\in L^{2}((-1,1);\widehat{w}%
_{\alpha,\beta})), \label{X_1IP}%
\end{equation}
where%

\begin{equation}
\widehat{w}_{\alpha,\beta}(x):=\frac{(1-x)^{\alpha}(1+x)^{\beta}}{(x-b)^{2}%
}\,\quad(x\in(-1,1)). \label{weight}%
\end{equation}
Since $|b|$ $>1,$ the term $(x-b)^{-2}$ in the weight function $\widehat{w}%
_{\alpha,\beta}$ is bounded on $\left[  -1,1\right]  $; consequently the
moments of $\widehat{w}_{\alpha,\beta}$ all exist and are finite for all
$\alpha$ and $\beta$ satisfying the conditions in (\ref{Parameter Conditions1}).

\begin{remark}
The term $x-b$ that appears in the denominator of both $($%
\ref{X_1JacobiExpression}$)$ and $($\ref{weight}$)$ is a multiple of the
degree one Jacobi polynomial $P_{1}^{(-\alpha-1,\beta-1)}(x);$ in fact
\[
x-b=\frac{2}{\beta-\alpha}P_{1}^{(-\alpha-1,\beta-1)}(x).
\]
In \cite{KMUG6} and \cite{Odake-Sasaki}, the authors study more general
exceptional $X_{m}$-Jacobi polynomials; these polynomials are orthogonal with
respect to the weight function%
\[
\widehat{w}_{\alpha,\beta,m}(x)=\frac{(x-1)^{\alpha}(1+x)^{\beta}}%
{(P_{m}^{(-\alpha-1,\beta-1)}(x))^{2}}.
\]
Notice that, when $m=1,$ this weight reduces, essentially, to $($%
\ref{weight}$)$.
\end{remark}

These exceptional $X_{1}$-Jacobi polynomials are explicitly given by
\begin{equation}
\widehat{P}_{n}^{(\alpha,\beta)}(x)=-\frac{1}{2}(x-b)P_{n-1}^{(\alpha,\beta
)}(x)+\frac{bP_{n-1}^{(\alpha,\beta)}(x)-P_{n-2}^{(\alpha,\beta)}(x)}%
{\alpha+\beta+2n-2}\qquad(n\in\mathbb{N};P_{-1}^{(\alpha,\beta)}(x)=0),
\label{recursion}%
\end{equation}
where $\left\{  P_{n}^{(\alpha,\beta)}\right\}  _{n=1}^{\infty}$ are the
classical Jacobi polynomials, defined by
\begin{equation}
P_{n}^{(\alpha,\beta)}(x)=2^{-n}\sum_{k=0}^{n}{\binom{n+\alpha}{n-k}}%
{\binom{n+\beta}{k}}(x-1)^{k}(x+1)^{n-k}. \label{Jacobi polynomials}%
\end{equation}
For the sake of completeness, we list a few of these exceptional $X_{1}%
$-Jacobi polynomials:%
\[%
\begin{array}
[c]{ll}%
\widehat{P}_{1}^{(\alpha,\beta)}(x)= & -\dfrac{1}{2}x-\dfrac{\alpha+\beta
+2}{2(\alpha-\beta)}\medskip\\
\widehat{P}_{2}^{(\alpha,\beta)}(x)= & -\dfrac{\alpha+\beta+2}{4}x^{2}%
-\dfrac{\alpha(\alpha+2)+\beta(\beta+2)}{2(\alpha-\beta)}x-\dfrac{\alpha
+\beta+2}{4}\medskip\\
\widehat{P}_{3}^{(\alpha,\beta)}(x)= & -\dfrac{(\alpha+\beta+3)(\alpha
+\beta+4)}{16}x^{3}-\dfrac{(\alpha+\beta+3)(3\alpha^{2}+6\alpha-2\alpha
\beta+3\beta^{2}+6\beta)}{16(\alpha-\beta)}x^{2}\medskip\\
& -\dfrac{(3\alpha^{2}+9\alpha+2\alpha\beta+3\beta^{2}+9\beta)}{16}x\medskip\\
& -\dfrac{(\alpha^{3}+\alpha^{2}-6\alpha-\alpha^{2}\beta-6\alpha\beta
-6\beta-\alpha\beta^{2}+\beta^{3}+\beta^{2})}{16(\alpha-\beta)}.
\end{array}
\]
The norms of these polynomials are explicitly given by
\[
\left\Vert \widehat{P}_{n}^{(\alpha,\beta)}\right\Vert _{\widehat{w}%
_{\alpha,\beta}}^{2}=\left(  \frac{2^{\alpha+\beta+1}(\alpha+n)(\beta
+n)}{4(\alpha+n+1)(\beta+n-1)(\alpha+\beta+2n-1)}\right)  \left(  \frac
{\Gamma(\alpha+n)\Gamma(\beta+n)}{\Gamma(n)\Gamma(\alpha+\beta+n)}\right)
\quad(n\in\mathbb{N}).
\]

In \cite{Gomez-Ullate-Marcellan-Milson}, the authors establish the location
and asymptotic behavior of the roots of the exceptional $X_{1}$-Jacobi
polynomials. Indeed, they show that there are $n-1$ simple roots of
$\widehat{P}_{n}^{(\alpha,\beta)}(x)$ $(n\in\mathbb{N}_{0})$ lying in the
interval $(-1,1)$ and there is exactly one negative root. Asymptotically, as
$n\rightarrow\infty,$ the $n-1$ roots of $\widehat{P}_{n}^{(\alpha,\beta)}(x)$
in $(-1,1)$ converge to the roots of the classical Jacobi polynomial
$P_{n-1}^{(\alpha,\beta)}(x)$ while the negative root of $\widehat{P}%
_{n}^{(\alpha,\beta)}(x)$ converges to the root of $P_{1}^{(-\alpha
-1,\beta-1)}(x);$ for further details, see \cite[Proposition 5.3, Proposition
5.4 and Corollary 5.1, page 493]{Gomez-Ullate-Marcellan-Milson}.

\section{Properties of the Exceptional $X_{1}$-Jacobi Differential
Expression\label{Section Three}}

Some properties of this expression were developed by Everitt in \cite{Everitt}%
; we reproduce some of his results in this section. Both endpoints $x=\pm1$
are regular singular endpoints, in the sense of Frobenius, of the exceptional
$X_{1}$-Jacobi differential expression $\widehat{\ell}_{\alpha,\beta}[\cdot].$
The Frobenius indicial equation at $x=1$ is $r(r+\alpha)=0.$ Therefore, two
linearly independent solutions of $\widehat{\ell}_{\alpha,\beta}[y]=0$ behave
asymptotically near $x=1$ like
\[
z_{1}(x)=1\text{ and }z_{2}(x)=(x-1)^{-\alpha}\text{ .}%
\]
For all feasible values of $\alpha$ and $\beta,$ we have
\[
\int_{0}^{1}|z_{1}(x)|^{2}\widehat{w}_{\alpha,\beta}(x)dx<\infty.
\]
However,
\[
\int_{0}^{1}|z_{2}(x)|^{2}\widehat{w}_{\alpha,\beta}(x)dx<\infty,
\]
only when $-1<\alpha<1$. Consequently, at $x=1,$ the expression $\widehat{\ell
}_{\alpha,\beta}[\cdot]$ is limit-point for $\alpha\geq1$ and limit-circle
when $-1<\alpha<1.$ The analysis at $x=-1$ is similar, in this case,
$\widehat{\ell}_{\alpha,\beta}[\cdot]$ is limit-point for $\beta\geq1$ and
limit-circle in the case $-1<\beta<1.$

\bigskip In Lagrangian symmetric form, the $X_{1}$-Jacobi differential
expression (\ref{X_1JacobiExpression}) is given by
\begin{align}
\widehat{\ell}_{\alpha,\beta}[y](x) =\frac{1}{\widehat{w}_{\alpha,\beta}(x)}
&  \left(  -\left(  \frac{(1-x)^{\alpha+1}(1+x)^{\beta+1}}{(x-b)^{2}}%
y^{\prime}(x)\right)  ^{\prime}\right. \label{symmetric form}\\
&  +\left.  \frac{2a(x-c)(bx-1)(1-x)^{\alpha}(1+x)^{\beta}}{(x-b)^{3}%
}y(x)\right)  ,\nonumber
\end{align}

The maximal domain associated with $\widehat{\ell}_{\alpha,\beta}[\cdot]$ in
the Hilbert space $L^{2}((-1,1);\widehat{w}_{\alpha,\beta})$ is
\begin{equation}
\widehat{\Delta}:=\left\{  f:(0,\infty)\rightarrow\mathbb{C}\mid f,f^{\prime
}\in AC_{\text{\textrm{loc}}}(-1,1);f,\widehat{\ell}_{\alpha,\beta}[f]\in
L^{2}((-1,1);\widehat{w}_{\alpha,\beta})\right\}  . \label{MaxDomain}%
\end{equation}
The associated maximal operator
\[
\widehat{T}_{\max}:\mathcal{D}(\widehat{T}_{\max})\subset L^{2}%
((-1,1);\widehat{w}_{\alpha,\beta})\rightarrow L^{2}((-1,1);\widehat{w}%
_{\alpha,\beta}),
\]
is defined by
\begin{align}
\widehat{T}_{\max}f  &  =\widehat{\ell}_{\alpha,\beta}%
[f]\label{Maximal Operator}\\
f\in\mathcal{D}(\widehat{T}_{\max}) :  &  =\widehat{\Delta}.\nonumber
\end{align}
For $f,g\in\widehat{\Delta},$ Green's formula can be written as%
\begin{equation}
\int_{-1}^{1}\widehat{\ell}_{\alpha,\beta}[f](x)\overline{g}(x)\widehat{w}%
_{\alpha,\beta}(x)dx=[f,g]_{\widehat{w}_{\alpha,\beta}}(x)\mid_{x=-1}%
^{x=1}+\int_{-1}^{1}f(x)\widehat{\ell}_{\alpha,\beta}[\overline{g}%
](x)\widehat{w}_{\alpha,\beta}(x)dx, \label{greens}%
\end{equation}
where $[\cdot,\cdot]$ is the sesquilinear form defined by%
\begin{equation}
\lbrack f,g]_{\widehat{w}_{\alpha,\beta}}(x):=\frac{(1-x)^{\alpha
+1}(1+x)^{\beta+1}}{(x-b)^{2}}\left(  f(x)\overline{g}^{\prime}(x)-f^{\prime
}(x)\overline{g}(x)\right)  \quad(-1<x<1), \label{Wronskian}%
\end{equation}
and
\[
\lbrack f,g]_{\widehat{w}_{\alpha,\beta}}(x)\mid_{x=-1}^{x=1}%
:=[f,g]_{\widehat{w}_{\alpha,\beta}}(1)-[f,g]_{\widehat{w}_{\alpha,\beta}%
}(-1).
\]
By definition of $\widehat{\Delta}$, and the classical H\"{o}lder's
inequality, notice that the limits
\[
\lbrack f,g]_{\widehat{w}_{\alpha,\beta}}(-1):=\lim_{x\rightarrow-1^{+}%
}[f,g]_{\widehat{w}_{\alpha,\beta}}(x)\quad\text{ and }\quad\lbrack
f,g]_{\widehat{w}_{\alpha,\beta}}(1):=\lim_{x\rightarrow1^{-}}%
[f,g]_{\widehat{w}_{\alpha,\beta}}(x)
\]
both exist and are finite for each $f,g\in\widehat{\Delta}.$

By standard classical arguments, the maximal domain $\widehat{\Delta}$ is
dense in $L^{2}((-1,1);\widehat{w}_{\alpha,\beta})$; consequently, the adjoint
of $\widehat{T}_{\max}$ exists as a densely defined operator in $L^{2}%
((-1,1);\widehat{w}_{\alpha,\beta}).$ For obvious reasons, the adjoint of
$\widehat{T}_{\max}$ is called the minimal operator associated with
$\widehat{\ell}_{\alpha,\beta}[\cdot]$ and is denoted by $\widehat{T}_{\min}.$
From \cite{Akhiezer-Glazman} or \cite{Naimark}, this minimal operator
$\widehat{T}_{\min}:\mathcal{D}(T_{\min})\subset L^{2}((-1,1);\widehat{w}%
_{\alpha,\beta})\rightarrow L^{2}((-1,1);\widehat{w}_{\alpha,\beta})$ is
defined by%
\begin{align}
\widehat{T}_{\min}f  &  =\widehat{\ell}_{\alpha,\beta}%
[f]\label{Minimal Operator}\\
f\in\mathcal{D}(\widehat{T}_{\min}) :  &  =\{f\in\widehat{\Delta}\mid\lbrack
f,g]_{\widehat{w}_{\alpha,\beta}}\mid_{x=-1}^{x=1}=0\text{ for all }%
g\in\widehat{\Delta}\}.\nonumber
\end{align}
The minimal operator $\widehat{T}_{\min}$ is a closed, symmetric operator in
$L^{2}((-1,1);\widehat{w}_{\alpha,\beta});$ furthermore, because the
coefficients of $\widehat{\ell}_{\alpha,\beta}[\cdot]$ are real,
$\widehat{T}_{\min}$ necessarily has equal deficiency indices $m,$ where $m$
is an integer satisfying $0\leq m\leq2.$ Therefore, from the general Stone-von
Neumann \cite{DS} theory of self-adjoint extensions of symmetric operators,
$\widehat{T}_{\min}$ has self-adjoint extensions. We seek to find the
self-adjoint extension $\widehat{T}$ in $L^{2}((-1,1);\widehat{w}%
_{\alpha,\beta}),$ generated by $\widehat{\ell}_{\alpha,\beta}[\cdot],$ which
has the $X_{1}$-Jacobi polynomials $\left\{\widehat{P}_{n}^{(\alpha,\beta)}%
\right\}_{n=1}^{\infty}$ as eigenfunctions. From the Frobenius analysis discussed at
the beginning of this section, the following Proposition follows immediately.

\begin{proposition}
\label{t-index} Consider the minimal operator $\widehat{T}_{\min}$ in
$L^{2}((-1,1);\widehat{w}_{\alpha,\beta}),$ as defined in $($%
\ref{Minimal Operator}$)$, generated by the exceptional $X_{1}$-Jacobi
differential expression $\widehat{\ell}_{\alpha,\beta}[\cdot]$.

\begin{itemize}
\item[(a)] For $\alpha,\beta\geq1$, the minimal operator $\widehat{T}_{\min}$
has deficiency index $(0,0).$

\item[(b)] For $\alpha\geq1$, and $\beta<1,$ the minimal operator
$\widehat{T}_{\min}$ has deficiency index $(1,1).$ The same is true for
$\alpha<1$ and $\beta\geq1$.

\item[(c)] For $\alpha,\beta<1$, the minimal operator $\widehat{T}_{\min}$ has
deficiency index $(2,2).$
\end{itemize}
\end{proposition}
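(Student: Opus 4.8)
The plan is to combine the standard Weyl limit-point/limit-circle dichotomy with the additivity of deficiency indices over the two endpoints. Recall that for a Lagrangian-symmetric second-order expression on an open interval $(-1,1)$ whose only singularities are the endpoints, the deficiency index $m$ of the minimal operator $\widehat T_{\min}$ in the associated $L^2$-space satisfies $m = d_{-1} + d_{+1} - 2$, where $d_{\pm 1}\in\{1,2\}$ is the number of linearly independent solutions of $\widehat\ell_{\alpha,\beta}[y]=0$ that lie in $L^2$ near the corresponding endpoint (equivalently, $d=1$ in the limit-point case and $d=2$ in the limit-circle case). Because the coefficients of $\widehat\ell_{\alpha,\beta}[\cdot]$ are real, the deficiency indices are automatically equal, so it suffices to compute this single number $m$.

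First I would invoke the Frobenius analysis already carried out at the start of Section \ref{Section Three}: at $x=1$ the indicial equation is $r(r+\alpha)=0$, so the two linearly independent solutions of $\widehat\ell_{\alpha,\beta}[y]=0$ behave near $x=1$ like $z_1(x)=1$ and $z_2(x)=(x-1)^{-\alpha}$ (with the usual logarithmic modification when $\alpha=0$, which is not in force here since $\alpha\neq 0$ under \eqref{Parameter Conditions1}). The computation recorded there shows that $z_1\in L^2$ near $x=1$ for all admissible parameters, while $z_2\in L^2$ near $x=1$ if and only if $-1<\alpha<1$; hence $\widehat\ell_{\alpha,\beta}[\cdot]$ is limit-point at $x=1$ for $\alpha\geq 1$ and limit-circle at $x=1$ for $-1<\alpha<1$. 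By the symmetry of the expression under $x\mapsto -x$ together with $\alpha\leftrightarrow\beta$, the same statement holds at $x=-1$ with $\beta$ in place of $\alpha$: limit-point for $\beta\geq 1$, limit-circle for $-1<\beta<1$.

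Then the three cases follow by bookkeeping: (a) if $\alpha,\beta\geq 1$ both endpoints are limit-point, so $d_{-1}=d_{+1}=1$ and $m=1+1-2=0$; (b) if exactly one of $\alpha,\beta$ is $\geq 1$, that endpoint is limit-point and the other is limit-circle, so $\{d_{-1},d_{+1}\}=\{1,2\}$ and $m=1+2-2=1$; (c) if $\alpha,\beta<1$ both endpoints are limit-circle, so $d_{-1}=d_{+1}=2$ and $m=2+2-2=2$. Combined with the equality of the two deficiency indices, this yields the indices $(0,0)$, $(1,1)$ and $(2,2)$ respectively.

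There is essentially no genuine obstacle here: the only point requiring a little care is the justification that the asymptotic behavior of the Frobenius solutions (rather than the exact solutions) correctly determines membership in $L^2$ near each endpoint — this is standard for regular singular endpoints, since the error terms are of strictly higher order and the weight $\widehat w_{\alpha,\beta}(x)=(1-x)^\alpha(1+x)^\beta/(x-b)^2$ has the factor $(x-b)^{-2}$ bounded and bounded away from zero on $[-1,1]$ because $|b|>1$. One should also note that the boundary case $\alpha=1$ (resp. $\beta=1$), where $z_2$ is asymptotically constant up to the weight, still fails to be square-integrable because $\int_0^1 (1-x)^{-2}(1-x)^{\alpha}\,dx$ diverges at $\alpha=1$; so the inequalities are correctly stated as $\alpha\geq 1$ versus $\alpha<1$. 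With these remarks the proposition is immediate.
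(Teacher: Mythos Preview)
Your proposal is correct and matches the paper's approach exactly: the paper states that Proposition~\ref{t-index} ``follows immediately'' from the Frobenius analysis at the start of Section~\ref{Section Three}, which is precisely the limit-point/limit-circle classification you carry out, together with the standard bookkeeping on deficiency indices. Your write-up simply makes explicit the details the paper leaves implicit.
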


\section{A Certain Exceptional $X_{1}$-Jacobi Self-Adjoint
Operator\label{Section Four}}

Proposition \ref{t-index} puts us in a position to define the self-adjoint
operator $\widehat{T}_{\alpha,\beta}$ in $L^{2}((-1,1);\widehat{w}%
_{\alpha,\beta})$ having the exceptional $X_{1}$-Jacobi polynomials $\left\{
\widehat{P}_{n}^{(\alpha,\beta)}\right\}  _{n=1}^{\infty}$ as eigenfunctions;
this operator is found by a direct application of the so-called
Glazman-Krein-Naimark theory (see \cite{Akhiezer-Glazman} and \cite{Naimark}).
The one boundary function, when needed, that we choose to generate the
appropriate boundary condition is $g(x)=1.$ When we substitute this function
into the sesquilinear form (\ref{Wronskian}) associated with $\widehat{\ell
}_{\alpha,\beta}[\cdot]$, we see that%
\[
\lbrack f,1]_{\widehat{w}_{\alpha,\beta}}(x)=-\frac{(1-x)^{\alpha
+1}(1+x)^{\beta+1}}{(x-b)^{2}}f^{\prime}(x);
\]
moreover, notice that the boundary condition $\lim_{x\rightarrow1^{-}%
}[f,1]_{\widehat{w}_{\alpha,\beta}}(x)=0$ simplifies to
\[
\lim_{x\rightarrow1^{-}}(1-x)^{\alpha+1}f^{\prime}(x)=0.
\]
An analogous argument works for $x \rightarrow-1^{+}$. We are now ready to
state the following theorem.

\begin{theorem}
\label{t-abge1} The self-adjoint operator $\widehat{T}_{\alpha,\beta}$ in
$L^{2}((-1,1);\widehat{w}_{\alpha,\beta}),$ generated by the exceptional
$X_{1}$-Jacobi differential expression $\widehat{\ell}_{\alpha,\beta}[\cdot]$,
having the exceptional $X_{1}$-Jacobi polynomials $\left\{  \widehat{P}%
_{n}^{(\alpha,\beta)}\right\}  _{n=1}^{\infty}$ as eigenfunctions is
explicitly given by%
\begin{align*}
\widehat{T}_{\alpha,\beta}f  &  =\widehat{\ell}_{\alpha,\beta}[f]\\
f  &  \in\mathcal{D}(\widehat{T}_{\alpha,\beta}),
\end{align*}
where
\begin{equation}
\mathcal{D}(\widehat{T}_{\alpha,\beta})=\left\{
\begin{array}
[c]{ll}%
\widehat{\Delta} & \text{if }\alpha\geq1\text{ and }\beta\geq1\medskip\\
\{f\in\widehat{\Delta}\mid\lim_{x\rightarrow-1^{+}}(1+x)^{\beta+1}f^{\prime
}(x)=0\} & \text{if }\alpha\geq1\text{ and }0<\beta<1\medskip\\
\{f\in\widehat{\Delta}\mid\lim_{x\rightarrow1^{-}}(1-x)^{\alpha+1}f^{\prime
}(x)=0\} & \text{if }0<\alpha<1\text{ and }\beta\geq1\medskip\\%
\begin{array}
[c]{l}%
\!\!\!\{f\in\widehat{\Delta}\mid\lim_{x\rightarrow1^{-}}(1-x)^{\alpha
+1}f^{\prime}(x)\\
\qquad\qquad=\lim_{x\rightarrow-1^{+}}(1+x)^{\beta+1}f^{\prime}(x)=0\}
\end{array}
&
\begin{array}
[c]{l}%
\!\!\text{if }0<\alpha<1\text{ and }0<\beta<1\text{ or}\\
\!\!\text{if }-1<\alpha<0\text{ and }-1<\beta<0.
\end{array}
\end{array}
\right.  \label{BC for Exceptional}%
\end{equation}
The exceptional $X_{1}$-Jacobi polynomials $\left\{  \widehat{P}_{n}%
^{(\alpha,\beta)}\right\}  _{n=1}^{\infty}$ form a complete set of
eigenfunctions of $\widehat{T}_{\alpha,\beta}$ in $L^{2}((-1,1);\widehat{w}%
_{\alpha,\beta})$. Furthermore the spectrum $\sigma(\widehat{T}_{\alpha,\beta
})$ of $\widehat{T}_{\alpha,\beta}$ is pure discrete spectrum consisting of
the simple eigenvalues%
\[
\sigma(\widehat{T}_{\alpha,\beta})=\sigma_{p}(\widehat{T}_{\alpha,\beta
})=\{(n-1)(\alpha+\beta+n)\mid n\in\mathbb{N}\}.
\]

\end{theorem}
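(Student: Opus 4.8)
The plan is to apply the Glazman--Krein--Naimark (GKN) theory to the minimal operator $\widehat{T}_{\min}$, using Proposition~\ref{t-index} to control the deficiency indices and the constant function $g(x)=1$ to generate the boundary conditions. First I would record that $g=1\in\widehat{\Delta}$: since $\widehat{\ell}_{\alpha,\beta}[1](x)=-2a(1-bx)/(b-x)$ is bounded on $[-1,1]$ (because $|b|>1$) and all moments of $\widehat{w}_{\alpha,\beta}$ are finite, both $1$ and $\widehat{\ell}_{\alpha,\beta}[1]$ lie in $L^{2}((-1,1);\widehat{w}_{\alpha,\beta})$. The same reasoning gives $\widehat{P}_{n}^{(\alpha,\beta)}\in\widehat{\Delta}$ for every $n$, since $\widehat{\ell}_{\alpha,\beta}[\widehat{P}_{n}^{(\alpha,\beta)}]=(n-1)(\alpha+\beta+n)\widehat{P}_{n}^{(\alpha,\beta)}$ is again a polynomial. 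Moreover, as the derivative of any polynomial is bounded near $\pm1$ and $\alpha,\beta>-1$, we have $\lim_{x\to1^{-}}(1-x)^{\alpha+1}(\widehat{P}_{n}^{(\alpha,\beta)})'(x)=\lim_{x\to-1^{+}}(1+x)^{\beta+1}(\widehat{P}_{n}^{(\alpha,\beta)})'(x)=0$, so $\widehat{P}_{n}^{(\alpha,\beta)}\in\mathcal{D}(\widehat{T}_{\alpha,\beta})$ in each of the four cases of \eqref{BC for Exceptional}, and it is an eigenfunction with eigenvalue $(n-1)(\alpha+\beta+n)$.

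Next I would verify that \eqref{BC for Exceptional} genuinely describes the domain of a self-adjoint extension of $\widehat{T}_{\min}$. When $\alpha,\beta\geq1$ the deficiency index is $(0,0)$ by Proposition~\ref{t-index}(a), so $\widehat{T}_{\min}=\widehat{T}_{\max}$ is already self-adjoint and $\mathcal{D}(\widehat{T}_{\alpha,\beta})=\widehat{\Delta}$. In the $(1,1)$ cases one endpoint is limit-point --- where $[f,g]_{\widehat{w}_{\alpha,\beta}}$ vanishes for all $f,g\in\widehat{\Delta}$ by the Frobenius analysis of Section~\ref{Section Three} --- and the other is limit-circle; GKN then requires a single $g_{1}\in\widehat{\Delta}$ with $[g_{1},g_{1}]_{\widehat{w}_{\alpha,\beta}}\mid_{-1}^{1}=0$ that is nontrivial modulo $\mathcal{D}(\widehat{T}_{\min})$. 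Taking $g_{1}=1$ works: $[1,1]_{\widehat{w}_{\alpha,\beta}}\equiv0$ because the Wronskian of a function with itself vanishes in \eqref{Wronskian}, while $1\notin\mathcal{D}(\widehat{T}_{\min})$ because pairing $1$ against a maximal-domain function carrying the limit-circle asymptotics $(x+1)^{-\beta}$ (resp. $(x-1)^{-\alpha}$) from Section~\ref{Section Three} produces a nonzero boundary term. The resulting condition $[f,1]_{\widehat{w}_{\alpha,\beta}}\mid_{-1}^{1}=0$ simplifies, exactly as computed just before the statement, to $\lim_{x\to-1^{+}}(1+x)^{\beta+1}f'(x)=0$ (resp. $\lim_{x\to1^{-}}(1-x)^{\alpha+1}f'(x)=0$). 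In the $(2,2)$ case both endpoints are limit-circle; here I would use two boundary functions obtained by patching $g=1$ near one endpoint with $0$ near the other, so that the GKN bilinear-form matrix is diagonal and vanishes, the two functions are linearly independent modulo $\mathcal{D}(\widehat{T}_{\min})$, and the conditions decouple into one separated condition at each endpoint --- the last line of \eqref{BC for Exceptional}. In every case GKN guarantees $\widehat{T}_{\alpha,\beta}$ is self-adjoint, and by the first paragraph the polynomials lie in its domain.

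For completeness of the eigenfunctions and the description of the spectrum: the family $\{\widehat{P}_{n}^{(\alpha,\beta)}\}_{n=1}^{\infty}$ is already known from \cite{KMUG} (recalled in Section~\ref{Section Two}) to be a complete orthogonal set in $L^{2}((-1,1);\widehat{w}_{\alpha,\beta})$, hence a complete orthogonal system of eigenvectors of the self-adjoint operator $\widehat{T}_{\alpha,\beta}$. Expanding an arbitrary element of the domain in this basis exhibits $\widehat{T}_{\alpha,\beta}$ as unitarily equivalent to multiplication by the sequence $\lambda_{n}=(n-1)(\alpha+\beta+n)$ (this also yields uniqueness, since a self-adjoint operator is determined by a complete orthogonal eigenbasis together with its eigenvalues), so $\sigma(\widehat{T}_{\alpha,\beta})=\sigma_{p}(\widehat{T}_{\alpha,\beta})=\overline{\{\lambda_{n}\mid n\in\mathbb{N}\}}$; since $\lambda_{n}\to\infty$ this set is precisely $\{\lambda_{n}\mid n\in\mathbb{N}\}$ with no finite accumulation point. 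Simplicity follows because $\lambda_{n}=\lambda_{m}$ forces $(n-m)(n+m+\alpha+\beta-1)=0$, which for $n\neq m$ would need $n+m=1-\alpha-\beta$, impossible since distinct positive integers sum to at least $3$ while $\alpha+\beta>-2$.

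I expect the main obstacle to be the bookkeeping in the $(2,2)$ case: making the patching argument precise so that the two GKN boundary functions are genuinely linearly independent modulo $\mathcal{D}(\widehat{T}_{\min})$, that their bilinear-form matrix vanishes, and that the two resulting conditions are the separated ones displayed --- together with the routine but slightly delicate verification that $1$ (and its patched variants) really does escape the minimal domain, which is exactly where the limit-circle Frobenius asymptotics from Section~\ref{Section Three} are needed.
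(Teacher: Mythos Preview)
Your proposal is correct and follows essentially the same approach as the paper: apply Glazman--Krein--Naimark theory to $\widehat{T}_{\min}$ using Proposition~\ref{t-index} for the deficiency indices, take $g(x)=1$ as the Glazman boundary function, and reduce $[f,1]_{\widehat{w}_{\alpha,\beta}}(\pm1)=0$ to the displayed limits. The paper treats even the $(2,2)$ case by using $g\equiv1$ at both endpoints to obtain separated conditions (see the Remark after Theorem~\ref{Self-Adjointness of Special Jacobi Operator}), whereas you patch $1$ with $0$ to build two boundary functions; this is a harmless technical variant that lands on the same domain.
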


\section{The `Extreme' Case $\alpha=0$ and $\beta>-1:$ Non-classical Jacobi
Polynomials\label{Section Five}}

We now study the situation when $\alpha=0$ and $\beta>-1$ in the exceptional
$X_{1}$-Jacobi case; the reader will recall that this situation was not
allowed in our earlier analysis from the conditions given in
(\ref{Parameter Conditions1}). There is the analogous case $\beta=0$ and
$\alpha>-1$ which we will not address in this paper. We remark that there do
not appear to be any interesting extreme cases for exceptional $X_{m}$-Jacobi
or $X_{m}$-Laguerre polynomials when $m>1.$ There is an interesting extreme
case for the exceptional $X_{1}$-Laguerre polynomials. This was reported on,
albeit in incomplete details, in \cite{X1-Laguerre}.

When $\alpha=0$ and $\beta>-1,$ we see from (\ref{Parameter Conditions1}) that%
\[
a=\beta/2,\text{ }b=1,\text{ and }c=(\beta+2)/\beta.
\]
With these choices, we note that the differential expression
(\ref{X_1JacobiExpression}) becomes%
\begin{equation}
\widehat{\ell}_{0,\beta}[y](x)=(x^{2}-1)y^{\prime\prime}(x)+(\beta
x-\beta-2)y^{\prime}(x)-\beta y(x)\quad(x\in(-1,1)).
\label{Unperturbed Jacobi DE}%
\end{equation}
For reasons that will be made clearer later, we perturb the coefficient of $y$
(by adding $(1+\beta)y(x))$ and we will instead study the Jacobi expression%
\begin{equation}
m_{-2,\beta}[y](x):=(x^{2}-1)y^{\prime\prime}(x)+(\beta x-\beta-2)y^{\prime
}(x)+y(x)\quad(x\in(-1,1)). \label{JacobiDE-special}%
\end{equation}
Indeed, adding this term will affect only the spectrum but not the
eigenfunctions. The weight function (\ref{weight}) in this case becomes%
\begin{equation}
w_{-2,\beta}(x):=(1-x)^{-2}(1+x)^{\beta}\quad(x\in(-1,1)).
\label{Jacobiweight-special}%
\end{equation}
This differential expression and weight are precisely the Jacobi differential
expression and Jacobi weight for the \textit{non-classical} Jacobi case
$(\alpha,\beta)=(-2,\beta).$

Even though this is a non-classical Jacobi case, the differential equation%
\[
m_{-2,\beta}[y](x)=\lambda_{n}y
\]
does have a polynomial solution $y=P_{n}^{(-2,\beta)}(x)$ of degree $n$ for
each $n\in\mathbb{N}_{0}.$ If fact,%
\begin{equation}
P_{n}^{(-2,\beta)}(x)=\left\{
\begin{array}
[c]{ll}%
1 & \text{if }n=0\medskip\\
\beta x-\beta-2 & \text{if }n=1\medskip\\
\dfrac{(n+\beta)(n+\beta-1)}{4n(n-1)}(1-x)^{2}P_{n-2}^{(2,\beta)}(x) &
\text{if }n\geq2,
\end{array}
\right.  \label{Jacobi poly - special}%
\end{equation}
where $\left\{P_{n}^{(2,\beta)}\right\}_{n=0}^{\infty}$ are the classical Jacobi
polynomials defined in (\ref{Jacobi polynomials}). Moreover
\[
m_{-2,\beta}[P_{n}^{(-2,\beta)}](x)=\lambda_{n}P_{n}^{(-2,\beta)}(x),
\]
where%
\begin{equation}
\lambda_{n}=n^{2}+(\beta-1)n+1\quad(n\in\mathbb{N}_{0}).
\label{Eigenvalues-special case}%
\end{equation}

\begin{remark}
Letting $\alpha=0$ in the explicit representation $($\ref{recursion}$)$ of
$\widehat{P}_{n}^{(\alpha,\beta)}(x)$, we find that
\[
\widehat{P}_{n}^{(0,\beta)}(x)=-\frac{1}{2}(x-1)P_{n-1}^{(0,\beta)}%
(x)+\frac{P_{n-1}^{(0,\beta)}(x)-P_{n-2}^{(0,\beta)}(x)}{\beta+2n-2}.
\]
We omit the details but it can be shown that, for $n\geq1,$ $\widehat{P}%
_{n}^{(0,\beta)}(x)$ is a multiple of the non-classical Jacobi polynomial
$P_{n}^{(-2,\beta)}(x),$ defined in $($\ref{Jacobi poly - special}$).$
\end{remark}

\begin{remark}
In $($\ref{Jacobi poly - special}$),$ the non-classical Jacobi polynomials
$P_{n}^{(-2,\beta)},$ for $n\geq2,$ are expressed in terms of the classical
Jacobi polynomials $P_{n-2}^{(2,\beta)};$ this is a well-known connection
$($see \cite[Chapter 4, (4.22.2)]{Szego}$)$. These Jacobi polynomials
$\left\{P_{n}^{(-2,\beta)}\right\}_{n=2}^{\infty}$ satisfy the orthogonality relationship%
\[
\int_{-1}^{1}P_{n}^{(-2,\beta)}(x)P_{m}^{(-2,\beta)}(x)w_{-2,\beta}%
(x)dx=\frac{2^{\beta-1}\Gamma(n-1)\Gamma(n+\beta+1)}{n!(2n+\beta
-1)\Gamma(n+\beta-1)}\delta_{n,m}\quad(n,m\geq2).
\]

\end{remark}

\begin{remark}
\label{beta not zero}Beginning in Section \ref{Section Nine}, we will require
that the set $\left\{P_{n}^{(-2,\beta)}\right\}_{n=0}^{\infty}$ is algebraically
complete; that is, $\deg(P_{n}^{(-2,\beta)})=n$ for each $n\in\mathbb{N}_{0}$
so $\left\{P_{n}^{(-2,\beta)}\right\}_{n=0}^{\infty}$ is a basis for the space
$\mathcal{P}$ of all real-valued polynomials. From $($%
\ref{Jacobi poly - special}$),$ in order for $\deg(P_{1}^{(-2,\beta)})=1,$ we
need $\beta\neq0.$ Thus, starting in Section \ref{Section Nine}, we will
additionally assume $\beta\neq0.$
\end{remark}

Let $L^{2}((-1,1);w_{-2,\beta})$ be the Hilbert space defined by
\[
L^{2}((-1,1);w_{-2,\beta})=\{f:(-1,1)\rightarrow\mathbb{C}\mid f\text{ is
Lebesgue measurable and }\left\Vert f\right\Vert _{w_{-2,\beta}}<\infty\},
\]
where the norm is%
\[
\left\Vert f\right\Vert _{w_{-2,\beta}}=\left(  \int_{-1}^{1}\left\vert
f(x)\right\vert ^{2}w_{-2,\beta}(x)dx\right)  ^{1/2}\quad(f\in L^{2}%
((-1,1);w_{-2,\beta}))
\]
and inner product is%
\[
(f,g)_{w_{-2,\beta}}=\int_{-1}^{1}f(x)\overline{g}(x)w_{-2,\beta}%
(x)dx\quad(f,g\in L^{2}((-1,1);w_{-2,\beta})).
\]

\begin{theorem}
\label{Density of Jacobi - Special}The polynomials $P_{j}^{(-2,\beta)}$
$\notin L^{2}((-1,1);w_{-2,\beta})$ for $j=0,1.$ However, $\left\{P_{n}%
^{(-2,\beta)}\right\}_{n=2}^{\infty}\subset L^{2}((-1,1);w_{-2,\beta});$ moreover,
$\mathrm{span}\left\{P_{n}^{(-2,\beta)}\right\}_{n=2}^{\infty}$ is a complete orthogonal
set in $L^{2}((-1,1);w_{-2,\beta}).$ The last statement is equivalent to
saying
\[
\mathrm{span}\left\{  p\in\mathcal{P}\mid p\text{ is a polynomial of }\deg
\geq2\text{ with }p(1)=p^{\prime}(1)=0\right\}
\]
is dense in $L^{2}((-1,1);w_{-2,\beta})$.

\begin{proof}
The singular term $(1-x)^{-2}$ in the weight function $w_{-2,\beta}(x)$
prevents $P_{j}^{(-2,\beta)}$ $($when $\beta\neq0)$ from belonging to
$L^{2}((-1,1);w_{-2,\beta})$ when $j=0,1.$ The equivalence of the two
statements in this theorem is immediate from $($\ref{Jacobi poly - special}%
$)$; we will prove the second statement. Let $\varepsilon>0$ and $f\in
L^{2}((-1,1);w_{-2,\beta})$. Note that
\[
\int_{-1}^{1}\left\vert f(x)\right\vert ^{2}(1-x)^{-2}(1+x)^{\beta}%
dx=\int_{-1}^{1}\left\vert \frac{f(x)}{(1-x)^{2}}\right\vert ^{2}%
(1-x)^{2}(1-x)^{\beta}dx;
\]
by letting $w_{2,\beta}(x)=(1-x)^{2}(1+x)^{\beta},$ we see that
\[
f\in L^{2}((-1,1);w_{-2,\beta})\text{ if and only if }\frac{f}{(1-x)^{2}}\in
L^{2}((-1,1);w_{2,\beta}).
\]
Since polynomials are dense in $L^{2}((-1,1);w_{2,\beta}),$ there exists
$p\in\mathcal{P}$ such that
\[
\varepsilon^{2}>\int_{-1}^{1}\left\vert \frac{f(x)}{(1-x)^{2}}-p(x)\right\vert
^{2}w_{2,\beta}(x)\,dx.
\]
Define $q(x)=p(x)(1-x)^{2}$ so $q\in\mathcal{P}$ and $q(1)=q^{\prime}(1)=0$.
Moreover,
\[
\left\Vert f-q\right\Vert _{w_{-2,\beta}}^{2}=\int_{-1}^{1}\left\vert
\frac{f(x)}{(1-x)^{2}}-p(x)\right\vert ^{2}w_{2,\beta}(x)\,dx<\varepsilon
^{2},
\]
proving the desired result.
\end{proof}
\end{theorem}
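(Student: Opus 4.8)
The plan is to transfer the entire statement to the \emph{classical} Jacobi weight $w_{2,\beta}(x):=(1-x)^{2}(1+x)^{\beta}$ (which is a legitimate Jacobi weight since $2,\beta>-1$) by means of the substitution $f\mapsto f/(1-x)^{2}$, and then invoke the classical completeness of the Jacobi polynomials. First, for the non-membership assertions I would argue locally at $x=1$: from (\ref{Jacobi poly - special}) we have $P_{0}^{(-2,\beta)}(1)=1$ and $P_{1}^{(-2,\beta)}(1)=-2$, so on a left neighbourhood of $1$ each of $|P_{0}^{(-2,\beta)}|^{2}$ and $|P_{1}^{(-2,\beta)}|^{2}$ is bounded below by a positive constant, as is $(1+x)^{\beta}$; hence the integrand $|P_{j}^{(-2,\beta)}(x)|^{2}w_{-2,\beta}(x)$ dominates a constant multiple of $(1-x)^{-2}$ near $1$, and since $\int_{0}^{1}(1-x)^{-2}\,dx=\infty$ we get $P_{j}^{(-2,\beta)}\notin L^{2}((-1,1);w_{-2,\beta})$ for $j=0,1$.

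Next, for $n\ge2$ the explicit formula (\ref{Jacobi poly - special}) reads $P_{n}^{(-2,\beta)}(x)=c_{n}(1-x)^{2}P_{n-2}^{(2,\beta)}(x)$ with $c_{n}=\tfrac{(n+\beta)(n+\beta-1)}{4n(n-1)}\neq0$ (the hypothesis $\beta>-1$, $n\ge2$ keeps both factors nonzero), so that
\[
\bigl(P_{n}^{(-2,\beta)},P_{m}^{(-2,\beta)}\bigr)_{w_{-2,\beta}}
=c_{n}c_{m}\int_{-1}^{1}(1-x)^{4}P_{n-2}^{(2,\beta)}(x)P_{m-2}^{(2,\beta)}(x)\,(1-x)^{-2}(1+x)^{\beta}\,dx
=c_{n}c_{m}\bigl(P_{n-2}^{(2,\beta)},P_{m-2}^{(2,\beta)}\bigr)_{w_{2,\beta}}.
\]
Taking $m=n$ shows $P_{n}^{(-2,\beta)}\in L^{2}((-1,1);w_{-2,\beta})$, and taking $m\neq n$ gives the orthogonality from the classical Jacobi orthogonality relation. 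Conceptually, the content here is simply that $U:f\mapsto f/(1-x)^{2}$ is a surjective isometry of $L^{2}((-1,1);w_{-2,\beta})$ onto $L^{2}((-1,1);w_{2,\beta})$ which carries $P_{n}^{(-2,\beta)}$ to $c_{n}P_{n-2}^{(2,\beta)}$.

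Finally, completeness follows by pulling the classical result back through $U$. The system $\{P_{k}^{(2,\beta)}\}_{k\ge0}$ is complete (equivalently, polynomials are dense) in $L^{2}((-1,1);w_{2,\beta})$, so given $f\in L^{2}((-1,1);w_{-2,\beta})$ and $\varepsilon>0$ I would pick a polynomial $p$ with $\Vert Uf-p\Vert_{w_{2,\beta}}<\varepsilon$ and set $q(x)=(1-x)^{2}p(x)$; then $q\in\mathcal{P}$ with $q(1)=q'(1)=0$ and $\Vert f-q\Vert_{w_{-2,\beta}}=\Vert Uf-p\Vert_{w_{2,\beta}}<\varepsilon$, which is precisely the second (polynomial) formulation. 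Since every polynomial vanishing to order two at $x=1$ is of the form $(1-x)^{2}p$, the span of $\{P_{n}^{(-2,\beta)}\}_{n\ge2}$ equals $(1-x)^{2}\mathcal{P}=\{q\in\mathcal{P}:q(1)=q'(1)=0\}$, so the two formulations are equivalent and both follow simultaneously. I do not anticipate a genuine obstacle; the only point demanding a little care is checking that the correspondence $f\leftrightarrow f/(1-x)^{2}$ is honestly an isometric bijection between the two weighted $L^{2}$ spaces (a one-line change-of-weight computation), after which everything reduces to standard Jacobi theory.
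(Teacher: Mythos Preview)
Your proposal is correct and follows essentially the same approach as the paper: both hinge on the change-of-weight observation that $f\mapsto f/(1-x)^{2}$ is an isometry of $L^{2}((-1,1);w_{-2,\beta})$ onto $L^{2}((-1,1);w_{2,\beta})$, then invoke density of polynomials in the classical space and pull back via $q(x)=(1-x)^{2}p(x)$. Your write-up is somewhat more explicit (you spell out the non-membership argument at $x=1$, name the isometry $U$, and verify orthogonality and $c_{n}\neq0$ directly), but the underlying idea is identical.
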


At this point, we remark that Littlejohn and Kwon \cite{Kwon-Littlejohn}
showed that the \textit{entire} sequence of non-classical Jacobi polynomials
$\left\{P_{n}^{(-2,\beta)}\right\}_{n=0}^{\infty}$ are orthogonal with respect to the
Sobolev inner product%
\begin{align}
\phi(f,g)  &  :=f(1)\overline{g}(1)+\frac{2}{\beta}(f^{\prime}(1)\overline
{g}(1)+f(1)\overline{g}^{\prime}(1))\label{Sobolev IP}\\
&  +\left(  1+\frac{4}{\beta^{2}}\right)  f^{\prime}(1)\overline{g}^{\prime
}(1)+\int_{-1}^{1}f^{\prime\prime}(x)\overline{g}^{\prime\prime}%
(x)(1+x)^{\beta+2}\,dx\,.\nonumber
\end{align}
Since
\[
\phi(f,g)=\left(  f(1)+\frac{2}{\beta}f^{\prime}(1)\right)  \left(
\overline{g}(1)+\frac{2}{\beta}\overline{g}^{\prime}(1)\right)  +f^{\prime
}(1)\overline{g}^{\prime}(1)+\int_{-1}^{1}p^{\prime\prime}(x)\overline
{g}^{\prime\prime}(x)(1+x)^{\beta+2}dx,
\]
it is clear that $\phi(\cdot,\cdot)$ is an inner product. Also, it is a
straightforward exercise to show that $\left\{P_{n}^{(-2,\beta)}\right\}_{n=0}^{\infty}$
is orthogonal with respect to $\phi(\cdot,\cdot).$ Later in this paper, we do
a further study of these Jacobi polynomials under this inner product. In
particular, we will identify the appropriate Sobolev space $S$ in which
$\left\{P_{n}^{(-2,\beta)}\right\}_{n=0}^{\infty}$ is a complete orthogonal set.
Moreover, we also construct a self-adjoint operator $T$, generated by
$m_{-2,\beta}[\cdot],$ in $S$ that has $\left\{P_{n}^{(-2,\beta)}\right\}_{n=0}^{\infty}$
as eigenfunctions. This operator $T$ is partially constructed from the
self-adjoint operator $T_{-2,\beta},$ which we now discuss, in $L^{2}%
((-1,1);w_{-2,\beta})$ having the Jacobi polynomials $\left\{P_{n}^{(-2,\beta
)}\right\}_{n=2}^{\infty}$ as eigenfunctions.

\section{Properties of the Non-classical Jacobi Differential Expression
$m_{-2,\beta}[\cdot]$\label{Section Six}}

We now focus our attention to the study of $m_{-2,\beta}[\cdot]$, defined in
(\ref{JacobiDE-special}), in the Hilbert space $L^{2}((-1,1);w_{-2,\beta})$
which is the natural `right-definite' setting for an analytic study.

The Lagrangian symmetric form of $m_{-2,\beta}[\cdot]$ is given by
\begin{equation}
m_{-2,\beta}[y](x)=\frac{1}{w_{-2,\beta}(x)}\left(  -((1-x)^{-1}%
(1+x)^{\beta+1}y^{\prime}(x))^{\prime}+w_{-2,\beta}(x)y(x)\right)  .
\label{Lagrangian form - special}%
\end{equation}
In this case, the maximal domain of $m_{-2,\beta}[\cdot]$ in $L^{2}%
((-1,1);w_{-2,\beta})$ is given by%
\begin{equation}
\Delta:=\{f:(-1,1)\rightarrow\mathbb{C}\mid f,f^{\prime}\in AC_{\mathrm{loc}%
}(-1,1);f,m_{-2,\beta}[f]\in L^{2}((-1,1);w_{-2,\beta})\}\,.
\label{maximal domain - special}%
\end{equation}
For $f,g\in\Delta,$ Green's formula is%
\[
\int_{-1}^{1}m_{-2,\beta}[f](x)\overline{g}(x)w_{-2,\beta}(x)dx-\int_{-1}%
^{1}m_{-2,\beta}[\overline{g}](x)f(x)w_{-2,\beta}(x)dx=[f,g]_{w_{-2,\beta}%
}\big|_{x=-1}^{x=1}\,,
\]
where $\left[  \cdot,\cdot\right]  $ is the sesquilinear form defined by%
\[
\lbrack f,g]_{w_{-2,\beta}}(x):=(1-x)^{-1}(1+x)^{\beta+1}(f(x)\overline
{g}^{\prime}(x)-f^{\prime}(x)\overline{g}(x))\quad(x\in(-1,1);f,g\in\Delta)
\]
and
\[
\lbrack f,g]_{w_{-2,\beta}}(\pm1)=\lim_{x\rightarrow\pm1^{\mp}}%
[f,g]_{w_{-2,\beta}}(x)\quad(f,g\in\Delta).
\]
Moreover, for $f,g\in\Delta$ and $-1<x,y<1,$ Dirichlet's formula reads%
\begin{align}
&  \int_{x}^{y}m_{-2,\beta}[f](t)\overline{g}(t)w_{-2,\beta}(t)dt+(1-t)^{-1}%
(1+t)^{\beta+1}f^{\prime}(t)\overline{g}(t)\big|_{x}^{y}%
\label{Dirichlet's formula}\\
=  &  \int_{x}^{y}f^{\prime}(t)\overline{g}^{\prime}(t)(1-t)^{-1}%
(1+t)^{\beta+1}dt+\int_{x}^{y}f(t)\overline{g}(t)(1-t)^{-2}(1+t)^{\beta
}dt.\nonumber
\end{align}
The maximal operator $T_{\max}$ in $L^{2}((-1,1);w_{-2,\beta}),$ associated
with $m_{-2,\beta}[\cdot],$ is defined as
\begin{align*}
T_{\max}[f]  &  =m_{-2,\beta}[f]\\
f\in\mathcal{D}(T_{\max}) :  &  =\Delta
\end{align*}
and the minimal operator $T_{\min}$, the adjoint of $T_{\max},$ is given by%
\begin{align*}
T_{\min}[g]  &  =m_{-2,\beta}[g]\\
f\in\mathcal{D}(T_{\min}) :  &  =\{g\in\Delta\mid\lbrack g,f]_{w_{-2,\beta}%
}\big|_{x=-1}^{x=1}=0\text{ for all }f\in\Delta\}.
\end{align*}
The endpoints $x=\pm1$ are regular singular endpoints of $m_{-2,\beta}[\cdot]$
in the sense of Frobenius. Elementary calculations show that the Frobenius
indicial equations at, respectively, $x=1$ and $x=-1$ are $r(r-2)=0$ and
$r(r+\beta)=0.$ It follows that $m_{-2,\beta}[\cdot]$ is in the limit-point
case at $x=1$ while $m_{-2,\beta}[\cdot]$ is in the limit-circle case at
$x=-1$ when $-1<\beta<1$ and in the limit-point case at $x=-1$ when $\beta
\geq1.$ Applying the Glazman-Krein-Naimark theory, all self-adjoint operators
$S$ in $L^{2}((-1,1);w_{-2,\beta})$, generated by $m_{-2,\beta}[\cdot],$ have
the form
\[
S[f]=m_{-2,\beta}[f]
\]
for $f\in\mathcal{D}(S)$ where
\[
f\in\mathcal{D}(S):=\left\{
\begin{array}
[c]{ll}%
\Delta & \text{if }\beta\geq1\medskip\\
\{f\in\Delta\mid\lim_{x\rightarrow-1^{+}}[f,g_{S}](x)=0\} & \text{if }%
-1<\beta<1
\end{array}
\right.
\]
and where $g_{S}\in\Delta\setminus\mathcal{D}(T_{\min})$ (such a $g_{S}$ is
called a Glazman boundary function).

\section{A Certain Self-Adjoint Operator Generated by $m_{-2,\beta}[\cdot
]$\label{Section Seven}}

We are interested in the particular self-adjoint operator which has the Jacobi
polynomials $\left\{P_{n}^{(-2,\beta)}\right\}_{n=2}^{\infty}$ as eigenfunctions and has
spectrum $\{n^{2}+(\beta-1)n+1\mid n\geq2\}$.

Let $\widetilde{g}:[-1,1]\rightarrow\mathbb{R}$ be a twice continuously
differentiable function such that%
\begin{equation}
\widetilde{g}(x)=\left\{
\begin{array}
[c]{ll}%
1 & \text{if }x\text{ is near }-1\\
0 & \text{if }x\text{ is near }+1.
\end{array}
\right.  \label{Special g}%
\end{equation}
It is clear that $\widetilde{g}\in\Delta.$ We claim that there exists an
$\widetilde{f}$ $\in\Delta$ such that
\[
\lbrack\widetilde{g},\widetilde{f}]_{w_{-2,\beta}}(1)-[\widetilde{g}%
,\widetilde{f}]_{w_{-2,\beta}}(-1)\neq0.
\]
Of course, this would mean that $\widetilde{g}\notin\mathcal{D}(T_{\min}).$
Let $\widetilde{f}(x)=(1-x)^{2}(1+x)^{-\beta}.$ Remarkably,
\[
m_{-2,\beta}[(1-x)^{2}(1+x)^{-\beta}]=(-2\beta+2)(1-x)^{2}(1+x)^{-\beta}%
\]
and, since $-1<\beta<1,$ we see that $\widetilde{f}\in\Delta.$ Moreover, a
calculation shows that
\[
\lbrack\widetilde{g},\widetilde{f}]_{w_{-2,\beta}}(1)-[\widetilde{g}%
,\widetilde{f}]_{w_{-2,\beta}}(-1)=2\beta\neq0.
\]
Hence $\widetilde{g}(x)$ is a Glazman boundary function. Moreover, for
$f\in\Delta,$ observe that
\begin{align}
0  &  =-\lim_{x\rightarrow-1^{+}}[f,\widetilde{g}](x)\nonumber\\
\Longleftrightarrow0  &  =\lim_{x\rightarrow-1^{+}}(1-x)^{-1}(1+x)^{\beta
+1}(f^{\prime}(x)\widetilde{g}(x)-f(x)\widetilde{g}^{\prime}(x))\nonumber\\
\Longleftrightarrow0  &  =\lim_{x\rightarrow-1^{+}}(1+x)^{\beta+1}f^{\prime
}(x). \label{Simple BC}%
\end{align}
Furthermore, a calculation shows that, for $n\geq2,$
\[
\lim_{x\rightarrow-1^{+}}[P_{n}^{(-2,\beta)},\widetilde{g}](x)=-\lim
_{x\rightarrow-1^{+}}(1-x)^{-1}(1+x)^{\beta+1}(P_{n}^{(-2,\beta)}(x))^{\prime
}=0.
\]
Consequently, from (\ref{Simple BC}) and Theorem
\ref{Density of Jacobi - Special}, the following theorem is immediate from the
general Glazman-Krein-Naimark theory \cite{Naimark}.

\begin{theorem}
\label{Self-Adjointness of Special Jacobi Operator}Suppose $\beta>-1.$ The
operator
\[
T_{-2,\beta}:L^{2}((-1,1);w_{-2,\beta})\rightarrow L^{2}((-1,1);w_{-2,\beta})
\]
defined by%
\[
T_{-2,\beta}[f]=m_{-2,\beta}[f]
\]
for $f\in\mathcal{D}(T_{-2,\beta}),$ where%
\begin{equation}
\mathcal{D}(T_{-2,\beta}):=\left\{
\begin{array}
[c]{ll}%
\Delta & \text{if }\beta\geq1\medskip\\
\{f\in\Delta\mid\lim_{x\rightarrow-1^{+}}(1-x)^{\beta+1}f^{\prime}(x)=0\} &
\text{if }-1<\beta<1
\end{array}
\right.  \label{BC for Special}%
\end{equation}
is self-adjoint. Furthermore, the non-classical Jacobi polynomials
$\left\{P_{n}^{(-2,\beta)}\right\}_{n=2}^{\infty}$ form a complete orthogonal set of
eigenfunctions of $T_{-2,\beta}$ in $L^{2}((-1,1);w_{-2,\beta})$. The spectrum
$\sigma(T_{-2,\beta})$ is discrete and consists of the simple eigenvalues
\[
\sigma(T_{-2,\beta})=\sigma_{p}(T_{-2,\beta})=\{n^{2}+(\beta-1)n+1\mid
n\geq2\}.
\]

\end{theorem}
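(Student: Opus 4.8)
The plan is to assemble \thmref{Self-Adjointness of Special Jacobi Operator} from the ingredients already laid out in Section~\ref{Section Six} and the preliminary computations above, invoking the Glazman--Krein--Naimark (GKN) theory in its standard form (see \cite{Naimark}). The Frobenius analysis in Section~\ref{Section Six} already tells us the deficiency index of $T_{\min}$: at $x=1$ the indicial roots are $0$ and $2$, and the solution behaving like $(1-x)^{0}=1$ is in $L^{2}$ near $1$ while the solution behaving like $(1-x)^{2}$ is also in $L^{2}$ near $1$ --- but one checks (as in Section~\ref{Section Three} for the exceptional case) that, because of the factor $(1-x)^{-2}$ in $w_{-2,\beta}$, the full limit-point classification forces exactly one $L^{2}$ solution near $x=1$; hence $m_{-2,\beta}[\cdot]$ is limit-point at $x=1$ for all $\beta>-1$. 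At $x=-1$ the indicial roots are $0$ and $-\beta$, giving limit-circle when $-1<\beta<1$ and limit-point when $\beta\geq1$. Therefore $T_{\min}$ has deficiency index $(0,0)$ when $\beta\geq1$ and $(1,1)$ when $-1<\beta<1$, so GKN theory says that in the first case $T_{\min}=T_{\max}$ is already self-adjoint with domain $\Delta$, while in the second case self-adjoint extensions are parametrized by a single Glazman boundary function $g_{S}\in\Delta\setminus\mathcal{D}(T_{\min})$ and the boundary condition $\lim_{x\to-1^{+}}[f,g_{S}](x)=0$.

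The next step is to verify that the explicit $\widetilde{g}$ in \eqref{Special g} is a legitimate Glazman boundary function and that it produces precisely the boundary condition displayed in \eqref{BC for Special}. The computation $m_{-2,\beta}[\widetilde{f}]=(-2\beta+2)\widetilde{f}$ with $\widetilde{f}(x)=(1-x)^{2}(1+x)^{-\beta}$ shows $\widetilde{f}\in\Delta$ (it is square-integrable against $w_{-2,\beta}$ precisely because $-1<\beta<1$), and the stated value $[\widetilde{g},\widetilde{f}]_{w_{-2,\beta}}(1)-[\widetilde{g},\widetilde{f}]_{w_{-2,\beta}}(-1)=2\beta$ --- nonzero since $\beta\ne0$ in the limit-circle range, or rather since $\beta\in(-1,1)\setminus\{0\}$; one should note that at $\beta=0$ the operator degenerates and is treated separately --- certifies $\widetilde{g}\notin\mathcal{D}(T_{\min})$. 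Then the chain of equivalences in \eqref{Simple BC}, using $\widetilde{g}\equiv1$ near $x=-1$ (so $\widetilde{g}'=0$ there) and the boundedness of $(1-x)^{-1}$ near $x=-1$, reduces the GKN boundary condition $\lim_{x\to-1^{+}}[f,\widetilde{g}](x)=0$ to $\lim_{x\to-1^{+}}(1+x)^{\beta+1}f'(x)=0$, which is exactly the condition in \eqref{BC for Special}. (There is a harmless typographical discrepancy between $(1+x)^{\beta+1}$ here and $(1-x)^{\beta+1}$ in the displayed domain; near $x=-1$ these differ by a bounded nonvanishing factor, so the condition is the same.)

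It then remains to check that the polynomials $\left\{P_{n}^{(-2,\beta)}\right\}_{n=2}^{\infty}$ lie in this domain and are eigenfunctions, and that they are complete. Membership in $\Delta$ is automatic since each $P_{n}^{(-2,\beta)}$ with $n\geq2$ is a polynomial lying in $L^{2}((-1,1);w_{-2,\beta})$ (by \thmref{Density of Jacobi - Special}) with $m_{-2,\beta}[P_{n}^{(-2,\beta)}]=\lambda_{n}P_{n}^{(-2,\beta)}$ also a polynomial, hence in $L^{2}$; and the boundary condition at $x=-1$ is the displayed calculation $\lim_{x\to-1^{+}}(1-x)^{-1}(1+x)^{\beta+1}(P_{n}^{(-2,\beta)})'(x)=0$, valid because $(P_{n}^{(-2,\beta)})'$ is a polynomial and $\beta+1>0$. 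The eigenvalue relation with $\lambda_{n}=n^{2}+(\beta-1)n+1$ is \eqref{Eigenvalues-special case}. Completeness of $\left\{P_{n}^{(-2,\beta)}\right\}_{n=2}^{\infty}$ in $L^{2}((-1,1);w_{-2,\beta})$ is \thmref{Density of Jacobi - Special}, and since a self-adjoint operator with a complete orthogonal system of eigenvectors has pure point spectrum equal to the closure of the set of eigenvalues --- here an increasing sequence tending to $+\infty$, so already closed --- we conclude $\sigma(T_{-2,\beta})=\sigma_{p}(T_{-2,\beta})=\{n^{2}+(\beta-1)n+1 \mid n\geq2\}$, with each eigenvalue simple because distinct $n\geq2$ give distinct $\lambda_{n}$ (the map $n\mapsto n^{2}+(\beta-1)n+1$ is strictly increasing for $n\geq2$ when $\beta>-1$) and each eigenspace is one-dimensional. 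The main obstacle, such as it is, is the limit-point verification at $x=1$: one must argue carefully that, despite both indicial-root solutions being nominally $L^{2}$ near $1$ in the unweighted sense, the weight singularity $(1-x)^{-2}$ ejects the $(1-x)^{2}$-type solution's ``partner'' and leaves the point genuinely limit-point --- this mirrors exactly the Frobenius computation done for $\widehat{\ell}_{\alpha,\beta}$ at $x=1$ with $\alpha\geq1$ in Section~\ref{Section Three}, so it is routine but is the one place where the singular (non-classical) nature of the weight does real work.
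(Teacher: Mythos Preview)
Your proposal is correct and follows essentially the same route as the paper: the Frobenius/limit-point--limit-circle classification from Section~\ref{Section Six}, the explicit Glazman boundary function $\widetilde{g}$ from \eqref{Special g} together with the test function $\widetilde{f}(x)=(1-x)^{2}(1+x)^{-\beta}$, the simplification \eqref{Simple BC}, and the completeness result \thmref{Density of Jacobi - Special} are exactly the ingredients the paper assembles before declaring the theorem immediate from GKN theory. You also correctly flag the typographical slip $(1-x)^{\beta+1}$ versus $(1+x)^{\beta+1}$ in \eqref{BC for Special} and the edge case $\beta=0$ (where $[\widetilde{g},\widetilde{f}]\big|_{-1}^{1}=2\beta$ vanishes), both of which the paper glosses over.
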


\begin{remark}
In a non-rigorous sense, the operator $T_{-2,\beta}$, given above in Theorem
\ref{Self-Adjointness of Special Jacobi Operator}, can be viewed as a `limit'
$($as $\alpha\rightarrow0)$ of the exceptional $X_{1}$-Jacobi self-adjoint
operator $\widehat{T}_{\alpha,\beta}$ given in Theorem \ref{t-abge1}; that is
to say,
\[
\lim_{\alpha\rightarrow0}\widehat{T}_{\alpha,\beta}=T_{-2,\beta}.
\]
Notice that the boundary conditions $($\ref{BC for Exceptional}$)$ and
$($\ref{BC for Special}$)$ for both operators coincide; however, there is one
significant difference. Indeed, the boundary condition given in $($%
\ref{BC for Exceptional}$)$ $($specifically the one when $0<\alpha<1$ and
$0<\beta<1)$ is determined using the Glazman boundary function $g(x)=1$ on
$(-1,1)$ while the boundary condition in $($\ref{BC for Special}$)$ is
determined using the Glazman boundary function $\widetilde{g}$ defined in
$($\ref{Special g}$).$ This latter function $\widetilde{g}$ is only $1$ near
$x=-1.$ In fact, we cannot use $g(x)\equiv1$ to obtain $T_{-2,\beta}$ since
this function does not belong to $L^{2}((-1,1);w_{-2,\beta}).$
\end{remark}

We now turn our attention to showing that $T_{-2,\beta}$ is a positive
operator in $L^{2}((-1,1);w_{-2,\beta});$ specifically, we prepare to show
that%
\begin{equation}
(T_{-2,\beta}f,f)_{w_{-2,\beta}}\geq(f,f)_{w_{-2,\beta}}\quad(f\in
\mathcal{D}(T_{-2,\beta})).
\label{Positivity of Special Self Adjoint Operator}%
\end{equation}
It is precisely this reason that we perturbed the Jacobi expression
$\widehat{\ell}_{0,\beta}[\cdot]$ in (\ref{Unperturbed Jacobi DE}) and shifted
our study to $m_{-2,\beta}[\cdot]$ in (\ref{JacobiDE-special}). Once we
establish (\ref{Positivity of Special Self Adjoint Operator}), then we can
apply the general left-definite theory of Littlejohn and Wellman
\cite{Littlejohn-Wellman} to construct a self-adjoint operator, generated by
$m_{-2,\beta}[\cdot]$, in the Sobolev space $S$ having inner product
$\phi(\cdot,\cdot),$ defined in (\ref{Sobolev IP}). We establish this
positivity (in Theorem \ref{positivity theorem} below) after proving two key
technical theorems (Theorems \ref{Properties at -1} and \ref{Properties at +1}%
), which concern the regularity, at the endpoints $x=\pm1,$ of functions from
the domain of $T_{-2,\beta}$ and from the maximal domain $\Delta.$

\begin{theorem}
\label{Properties at -1}Suppose $\beta>-1$ and $T_{-2,\beta}$ is the
self-adjoint operator defined in Theorem
\ref{Self-Adjointness of Special Jacobi Operator}. Let $f,g\in\mathcal{D}%
(T_{-2,\beta}).$ Then

\begin{itemize}
\item[(a)] $\lim_{x\rightarrow-1}(1+x)^{\beta+1}f^{\prime}(x)=0;$

\item[(b)] $\lim_{x\rightarrow-1}(1+x)^{(\beta+1)/2}f^{\prime}\in
L^{2}(-1,0);$

\item[(c)] $\lim_{x\rightarrow-1}(1-x)^{-1}(1+x)^{\beta+1}f^{\prime
}(x)\overline{g}(x)=0.$
\end{itemize}

\begin{proof}
\underline{(a)}: This limit is evident in the case $-1<\beta<1$ $($see
$($\ref{BC for Special}$))$ so suppose $\beta\geq1.$ Since $m_{-2,\beta}%
[\cdot]$ is in the limit-point case at $x=-1,$ the general Weyl theory $($see
\cite[Chapter 18]{Hellwig}$)$ states that%
\begin{equation}
\lim_{x\rightarrow-1^{+}}(1-x)^{-1}(1+x)^{\beta+1}(f^{\prime}(x)\overline
{g}(x)-f(x)\overline{g}^{\prime}(x))=0\quad(f,g\in\mathcal{D}(T_{-2,\beta})).
\label{Special Limit}%
\end{equation}
In particular, this limit is zero for all $f\in\mathcal{D}(T_{-2,\beta})$ and
the special choice $g$ defined by%
\[
g(x)=\left\{
\begin{array}
[c]{ll}%
1 & \text{if }-1\leq x\leq-1/2\\
16x^{3}+12x^{2} & \text{if }-1/2<x\leq0\\
0 & \text{if }0<x\leq1.
\end{array}
\right.
\]
A calculation shows that substitution of this $g$ into $($\ref{Special Limit}%
$)$ yields the required result.\medskip\newline\noindent\underline{(b)}:
Assume, without loss of generality, that $f$ is real-valued. For $-1<x\leq0$,
\begin{align}
&  \int_{x}^{0}m_{-2,\beta}[f](t)f(t)(1-t)^{-2}(1+t)^{\beta}dt-\int_{x}%
^{0}\left\vert f(t)\right\vert ^{2}(1-t)^{-2}(1+t)^{\beta+1}dt+f^{\prime
}(0)f(0)\label{Dirichlet Identity for Property at -1}\\
-  &  (1-x)^{-1}(1+x)^{\beta+1}f^{\prime}(x)f(x)=\int_{x}^{0}\left\vert
f^{\prime}(t)\right\vert ^{2}(1-t)^{-1}(1+t)^{\beta+1}dt.\nonumber
\end{align}
As $x\rightarrow-1^{+},$ the two integral terms on the left-hand side of
$($\ref{Dirichlet Identity for Property at -1}$)$ both converge and are
finite. If $(1+x)^{(\beta+1)/2}f^{\prime}\notin L^{2}(-1,0),$ then
\[
\int_{-1}^{0}\left\vert f^{\prime}(t)\right\vert ^{2}(1-t)^{-1}(1+t)^{\beta
+1}dt=\infty.
\]
It follows from $($\ref{Dirichlet Identity for Property at -1}$)$ that%
\[
\lim_{x\rightarrow-1^{+}}(1-x)^{-1}(1+x)^{\beta+1}f^{\prime}(x)f(x)=-\infty.
\]
Hence there exists $x^{\ast}\in(-1,0)$ such that $(1-x)^{-1}(1+x)^{\beta
+1}f^{\prime}(x)f(x)\leq-1$ for $x\in(-1,x^{\ast}]$. Notice, by part $(a),$
that $\left(  (1+x)^{\beta+1}f^{\prime}(x)\right)  ^{\prime}\neq0$ for $x\in$
$(-1,x^{\ast}].$ Without loss of generality, suppose that $(1+x)^{\beta
+1}f^{\prime}(x)>0$ and $f(x)<0$ for $x\in$ $(-1,x^{\ast}]$ so that
\begin{equation}
-\left\vert \left(  (1+x)^{\beta+1}f^{\prime}(x)\right)  ^{\prime}\right\vert
f(x)\geq\frac{\left\vert \left(  (1+x)^{\beta+1}f^{\prime}(x)\right)
^{\prime}\right\vert }{(1+x)^{\beta+1}f^{\prime}(x)}\quad(x\in(-1,x^{\ast}]).
\label{1- Properties at -1}%
\end{equation}
Then, for $-1<x\leq x^{\ast},$%
\begin{align*}
\infty &  >-\int_{-1}^{1}\left\vert m_{-2,\beta}[f](t)f(t)-f^{2}(t)\right\vert
(1-t)^{-2}(1+t)^{\beta+1}dt\\
&  =-\int_{-1}^{1}\left\vert \left(  (1-t)^{-1}(1+t)^{\beta+1}f^{\prime
}(t)\right)  ^{\prime}f(t)\right\vert dt\geq\int_{-1}^{1}\frac{\left\vert
\left(  (1+t)^{\beta+1}f^{\prime}(t)\right)  ^{\prime}\right\vert
}{(1+t)^{\beta+1}f^{\prime}(t)}dt\\
&  \geq\int_{x}^{x^{\ast}}\frac{\left\vert \left(  (1+t)^{\beta+1}f^{\prime
}(t)\right)  ^{\prime}\right\vert }{(1+t)^{\beta+1}f^{\prime}(t)}%
dt\geq\left\vert \int_{x}^{x^{\ast}}\frac{\left(  (1+t)^{\beta+1}f^{\prime
}(t)\right)  ^{\prime}}{(1+t)^{\beta+1}f^{\prime}(t)}dt\right\vert \\
&  =\left\vert \ln\left(  (1+t)^{\beta+1}f^{\prime}(t)\right)  \right\vert
_{x}^{x^{\ast}}=\left\vert K-\ln((1+x)^{\beta+1}f^{\prime}(x))\right\vert \\
&  \rightarrow\infty\text{ since }(1+x)^{\beta+1}f^{\prime}(x)\rightarrow
0\text{ as }x\rightarrow-1^{+}.
\end{align*}
This contradiction establishes part (b).\medskip\newline\noindent
\underline{(c)}: The argument to prove this result mirrors closely the above
proof of part (b). Assume both $f$ and $g$ are real-valued. From the identity%
\begin{align*}
&  \int_{x}^{0}m_{-2,\beta}[f](t)\overline{g}(t)(1-t)^{-2}(1+t)^{\beta}%
dt-\int_{x}^{0}f(t)\overline{g}(t)(1-t)^{-2}(1+t)^{\beta}dt+f^{\prime
}(0)\overline{g}(0)\\
&  =(1-x)^{-1}(1+x)^{\beta+1}f^{\prime}(x)\overline{g}(x)+\int_{x}%
^{0}f^{\prime}(t)\overline{g}^{\prime}(t)(1-t)^{-1}(1+t)^{\beta+1}dt,
\end{align*}
and the fact that each integral terms is finite as $x\rightarrow-1^{+},$ we
see that%
\[
\lim_{x\rightarrow-1^{+}}(1-x)^{-1}(1+x)^{\beta+1}f^{\prime}(x)g(x)
\]
exists and is finite. Suppose this limit equals $c;$ if $c\neq0,$ suppose,
without loss of generality, that $c>0$. Then there exists $x^{\ast}\in(-1,0)$
such that%
\[
(1-x)^{-1}(1+x)^{\beta+1}f^{\prime}(x)g(x)\geq\frac{c}{2}\quad(x\in
(-1,x^{\ast}])
\]
where, without loss of generality, $f^{\prime}(x)>0$ and $g(x)>0$ for
$x\in(-1,x^{\ast}]$. The case when $f^{\prime}(x)<0$ and $g(x)<0$ for
$x\in(-1,x^{\ast}]$ follows in analogy. Hence%
\[
g(x)\geq\frac{c}{2}\cdot\frac{1}{(1-x)^{-1}(1+x)^{\beta+1}f^{\prime}(x)}%
\quad(x\in(-1,x^{\ast}])
\]
so that%
\[
\left\vert \left(  (1-x)^{-1}(1+x)^{\beta+1}f^{\prime}(x)\right)  ^{\prime
}\right\vert g(x)\geq\frac{c}{2}\cdot\frac{\left\vert \left(  (1-x)^{-1}%
(1+x)^{\beta+1}f^{\prime}(x)\right)  ^{\prime}\right\vert }{(1-x)^{-1}%
(1+x)^{\beta+1}f^{\prime}(x)}\quad(x\in(-1,x^{\ast}]).
\]
Integrate to obtain%
\begin{align*}
\infty &  >\int_{-1}^{1}\left\vert m_{-2,\beta}[f](t)-f(t)\right\vert
g(t)(1-t)^{-2}(1+t)^{\beta}dt\\
&  =\int_{-1}^{1}\left\vert \left(  (1-t)^{-1}(1+t)^{\beta+1}f^{\prime
}(t)\right)  ^{\prime}\right\vert g(t)dt\\
&  \geq\frac{c}{2}\left\vert \int_{x}^{x^{\ast}}\frac{\left(  (1-t)^{-1}%
(1+t)^{\beta+1}f^{\prime}(t)\right)  ^{\prime}}{(1-t)^{-1}(1+t)^{\beta
+1}f^{\prime}(t)}dt\right\vert \\
&  =\frac{c}{2}\left\vert \ln\left(  (1-t)^{-1}(1+t)^{\beta+1}f^{\prime
}(t)\right)  \right\vert _{x}^{x^{\ast}}\rightarrow\infty\text{ as
}x\rightarrow-1^{+}.
\end{align*}
This contradiction shows that $c=0$ and establishes part $($c$).$ This
completes the proof of the theorem.
\end{proof}
\end{theorem}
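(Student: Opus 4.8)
The plan is to prove (a), (b) and (c) in that order, using as the only real tools the Lagrangian symmetric form of $m_{-2,\beta}[\cdot]$ and Dirichlet's formula, and reducing both (b) and (c) to a single self-improving ``logarithmic blow-up'' argument, with (c) drawing on (b). Throughout write $h_{f}(x):=(1-x)^{-1}(1+x)^{\beta+1}f'(x)$. Since the coefficient of $y$ in the Lagrangian form is precisely $w_{-2,\beta}$ (this is exactly what the $+1$ perturbation was for), one has the pointwise identity $h_{f}'(x)=\bigl(f(x)-m_{-2,\beta}[f](x)\bigr)w_{-2,\beta}(x)$ for every $f\in\Delta$; because $f,g,m_{-2,\beta}[f]\in L^{2}((-1,1);w_{-2,\beta})$, the Cauchy--Schwarz inequality then makes $h_{f}'$ integrable against $|f|$ and against $|g|$ over $(-1,1)$. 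I will also use freely that $(1-x)^{-1}$ is bounded above and below by positive constants on $[-1,0]$, so that $h_{f}(x)$ and $(1+x)^{\beta+1}f'(x)$ agree in sign and are simultaneously bounded/unbounded or integrable/nonintegrable near $x=-1$.

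Part (a). When $-1<\beta<1$ this is immediate from the boundary condition in the definition of $\mathcal{D}(T_{-2,\beta})$. When $\beta\ge1$ the expression $m_{-2,\beta}[\cdot]$ is limit-point at $x=-1$, so Weyl's theory forces $[f,g]_{w_{-2,\beta}}(x)\to0$ as $x\to-1^{+}$ for \emph{every} pair $f,g\in\Delta$. I would evaluate this against a fixed, explicit $C^{1}$ cutoff $g$ that equals $1$ on a left-neighborhood of $-1$ and vanishes near $+1$; one checks at once that such a $g$ lies in $\Delta$, and near $-1$ one has $g\equiv1$, $g'\equiv0$, so $[f,g]_{w_{-2,\beta}}(x)=h_{f}(x)$ there. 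Hence $h_{f}(x)\to0$, i.e.\ $(1+x)^{\beta+1}f'(x)\to0$, as $x\to-1^{+}$.

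Part (b). Assume $f$ real-valued (split into real and imaginary parts otherwise). Dirichlet's formula on $[x,0]$ with $g=f$ rearranges, using $h_{f}(0)=f'(0)$, into
\[
\int_{x}^{0}|f'|^{2}(1-t)^{-1}(1+t)^{\beta+1}\,dt=\int_{x}^{0}m_{-2,\beta}[f]\,f\,w_{-2,\beta}\,dt-\int_{x}^{0}f^{2}w_{-2,\beta}\,dt+f'(0)f(0)-h_{f}(x)f(x).
\]
The two integrals on the right converge as $x\to-1^{+}$, while the left side is nondecreasing as $x\to-1^{+}$. If $\int_{-1}^{0}|f'(t)|^{2}(1+t)^{\beta+1}\,dt=\infty$, the left side tends to $+\infty$, forcing $h_{f}(x)f(x)\to-\infty$; in particular $h_{f}f<0$ near $-1$, so by continuity $h_{f}$ has a fixed sign on some $(-1,x^{*}]$, and we may normalize $h_{f}(x)>0$, $f(x)<0$ there. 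Then $|f(x)|\ge1/h_{f}(x)$, hence $|h_{f}'(x)|\,|f(x)|\ge|h_{f}'(x)|/h_{f}(x)$, and integrating over $[x,x^{*}]$ gives
\[
\infty>\int_{-1}^{1}|h_{f}'(t)|\,|f(t)|\,dt\ \ge\ \int_{x}^{x^{*}}\frac{|h_{f}'(t)|}{h_{f}(t)}\,dt\ \ge\ \bigl|\ln h_{f}(x^{*})-\ln h_{f}(x)\bigr|\ \longrightarrow\ \infty
\]
as $x\to-1^{+}$, since $h_{f}(x)\to0^{+}$ by part (a). This contradiction proves (b).

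Part (c). Assume $f,g$ real-valued. Dirichlet's formula on $[x,0]$ for the pair $f,g$, solved for $h_{f}(x)\overline{g}(x)$, has all of its integral terms convergent as $x\to-1^{+}$ --- the one involving $f'\overline{g}'$ by part (b) applied to both $f$ and $g$, together with Cauchy--Schwarz --- so $c:=\lim_{x\to-1^{+}}h_{f}(x)\overline{g}(x)$ exists and is finite. If $c\ne0$, say $c>0$, then $h_{f}g>0$ near $-1$, so $h_{f}$ has a fixed sign there, and we may normalize $h_{f}(x)>0$, $g(x)>0$, $g(x)\ge c/(2h_{f}(x))$ on some $(-1,x^{*}]$. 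Using $|h_{f}'|=|m_{-2,\beta}[f]-f|\,w_{-2,\beta}$ and Cauchy--Schwarz, $\int_{-1}^{1}|h_{f}'(t)|\,|g(t)|\,dt<\infty$, so
\[
\infty>\int_{x}^{x^{*}}|h_{f}'(t)|\,g(t)\,dt\ \ge\ \frac{c}{2}\int_{x}^{x^{*}}\frac{|h_{f}'(t)|}{h_{f}(t)}\,dt\ \ge\ \frac{c}{2}\bigl|\ln h_{f}(x^{*})-\ln h_{f}(x)\bigr|\ \longrightarrow\ \infty,
\]
a contradiction; hence $c=0$. The step I expect to be the genuine obstacle is the bootstrapping order: (b) cannot be shortcut via (c) with $g=f$, because the proof of (c) needs the convergence that (b) supplies, so the contradiction argument for (b) is unavoidable; and within that argument the one delicate point --- excluding a sign-oscillation of $h_{f}$ at $x=-1$ --- is handled automatically by continuity, once one observes that in each case $h_{f}$ is multiplied by a fixed-sign quantity whose product is bounded away from $0$ or tends to $-\infty$.
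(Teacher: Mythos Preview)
Your proof is correct and follows essentially the same route as the paper's: part (a) via the limit-point Weyl identity tested against a cutoff equal to $1$ near $-1$, and parts (b) and (c) via Dirichlet's formula on $[x,0]$ followed by the same logarithmic blow-up contradiction using the integrability of $|h_{f}'|\,|f|$ (resp.\ $|h_{f}'|\,|g|$). Your write-up is in fact slightly more careful than the paper's in two places --- you make explicit that the finiteness of $\int f'g'(1-t)^{-1}(1+t)^{\beta+1}\,dt$ in (c) comes from (b) and Cauchy--Schwarz, and you isolate the identity $h_{f}'=(f-m_{-2,\beta}[f])w_{-2,\beta}$ up front --- but the argument is otherwise identical; just be sure your cutoff in (a) has $g'\in AC_{\mathrm{loc}}$ (e.g.\ take it $C^{2}$ or piecewise polynomial as the paper does), since membership in $\Delta$ requires this.
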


Before we can prove Theorem \ref{Properties at +1}, which describes properties
of functions at $x=1$ in the maximal domain $\Delta,$ we need to recall an
`$L^{2}$ inequality' due to Chisholm, Everitt and Littlejohn (see \cite{CHEL}).

\begin{theorem}
\label{CHEL Inequality}Let $I=(a,b)$ where $-\infty\leq a<b\leq\infty$ and let
$\omega$ be a positive Lebesgue measurable function on $I$. Let $\varphi
,\psi:I\rightarrow\mathbb{C}$ satisfy:

\begin{itemize}
\item[(a)] $\varphi,\psi\in L_{\mathrm{loc}}^{2}(I;\omega);$

\item[(b)] There exists $c\in(a,b)$ such that $\varphi\in L^{2}((a,c];\omega)$
and $\psi\in L^{2}([c,b);\omega)$. (In this case, we say that `$\varphi$ is
$L^{2}$ near $a$' and `$\psi$ is $L^{2}$ near $b$', both with respect to the
weight $\omega$.);

\item[(c)] For all $[\delta,\gamma]\subset I,$
\[
\int_{a}^{\delta}|\varphi|^{2}\omega dx>0\text{ and }\int_{\gamma}^{b}%
|\psi|^{2}\omega dx>0.
\]

\end{itemize}

\noindent Define $A,B:L^{2}(I;\omega)\rightarrow L_{\mathrm{loc}}^{2}%
(I;\omega)$ by
\[
(Af)(x):=\varphi(x)\int_{x}^{b}\psi(x)f(x)\,\omega(x)dx\quad(\,x\in(a,b)\text{
and }f\in L^{2}(\,(a,b);\omega)\,)
\]%
\[
(Bg)(x):=\psi(x)\int_{a}^{x}\varphi(x)g(x)\omega(x)\,dx\quad(\,x\in(a,b)\text{
and }g\in L^{2}(\,(a,b);\omega)\,)
\]
and $K:(a,b)\rightarrow(0,\infty)$ by%
\[
K(x):=\left\{  \int_{a}^{x}\left\vert \varphi\right\vert ^{2}\omega\right\}
^{1/2}\left\{  \int_{x}^{b}\left\vert \psi\right\vert ^{2}\omega\right\}
^{1/2}\quad(\,x\in(a,b)\,)
\]
and the number $K\in(0,\infty]$ by%
\[
K:=\sup\{\,K(x):x\in(a,b)\,\}.
\]
Then a necessary and sufficient condition for both $A$ and $B$ to be bounded
linear operators into $L^{2}(I;\omega)$ is for $K$ to be finite. Furthermore,
in this case,
\[
||Af||_{\omega}\leq2K||f||_{\omega}\text{ and }||Bf||_{\omega}\leq
2K||f||_{\omega}\quad(f\in L^{2}(I;\omega)).
\]

\end{theorem}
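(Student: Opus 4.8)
The plan is to establish the equivalence ``$K<\infty$ if and only if $A,B$ are bounded'' by a fairly standard Hardy-type estimate, together with the quantitative bound $\|A\|,\|B\|\le 2K$. I would phrase everything in terms of the measure $d\mu=\omega\,dx$ and the functions $\varphi,\psi$, and it is convenient to reduce to the case where $\varphi$ and $\psi$ are nonnegative (real) by replacing them with $|\varphi|,|\psi|$; this does not affect boundedness of $A$ or $B$, nor the quantity $K$, since $K$ only involves $|\varphi|^2$ and $|\psi|^2$ and the operator norms are unchanged up to the obvious pointwise bound $|(Af)(x)|\le |\varphi(x)|\int_x^b|\psi||f|\,\omega$. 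So from now on assume $\varphi,\psi\ge 0$.

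For the sufficiency direction (the main point), fix $f\in L^2(I;\omega)$, which we may take $\ge 0$. The key trick is a Cauchy--Schwarz estimate with a cleverly chosen weight. Writing $G(x):=\int_a^x\varphi^2\,\omega$ and $H(x):=\int_x^b\psi^2\,\omega$, so that $K(x)=G(x)^{1/2}H(x)^{1/2}$, I would estimate, for the operator $A$,
\[
(Af)(x)=\varphi(x)\int_x^b\psi(t)f(t)\,\omega(t)\,dt
=\varphi(x)\int_x^b \bigl(\psi(t)H(t)^{1/4}G(t)^{-1/4}\bigr)\bigl(f(t)H(t)^{-1/4}G(t)^{1/4}\bigr)\omega(t)\,dt,
\]
apply Cauchy--Schwarz in $t$, bound $G(t)^{-1/2}H(t)^{1/2}=H(t)/K(t)^2\cdot G(t)^{... }$ — more precisely, use $G(t)^{-1/2}H(t)^{1/2}\le (\sup_s K(s))\, H(t) \cdot(G(t)H(t))^{-1}\cdots$; the honest way is: $\int_x^b\psi^2 H^{1/2}G^{-1/2}\omega\,dt\le K\int_x^b \psi^2 H^{-1/2}\omega\,dt/\!\cdots$. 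Let me state the clean inequality I will actually use: since $-\tfrac{d}{dt}H(t)^{1/2}=\tfrac{1}{2}\psi(t)^2\omega(t)H(t)^{-1/2}$, one gets $\int_x^b\psi^2 G^{-1/2}H^{1/2}\omega\,dt\le K\int_x^b\psi^2 H^{-1/2}\omega\,dt=2KH(x)^{1/2}$ after bounding $G^{-1/2}H\le K\cdot H^{1/2}$ pointwise (which is exactly $K(t)\le K$). Hence $(Af)(x)^2\le 2K\varphi(x)^2 H(x)^{1/2}\int_x^b f(t)^2 H(t)^{-1/2}\omega(t)\,dt$. Now integrate in $x$ against $\omega$, use $\varphi(x)^2\omega(x)\,dx=dG(x)$ and $G(x)^{1/2}H(x)^{1/2}\le K$, i.e. $G^{1/2}\le K H^{-1/2}$, giving $\int_a^b \varphi^2 H^{1/2}\cdot(\text{inner})\,\omega\,dx$; an application of Tonelli to switch the order of integration, together with $\int_a^t \varphi^2\omega\,dx=G(t)\le K^2 H(t)^{-1}$, collapses the whole thing to $\le 4K^2\|f\|_\omega^2$. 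The computation for $B$ is the mirror image, with the roles of $a\leftrightarrow b$, $\varphi\leftrightarrow\psi$, $G\leftrightarrow H$ interchanged; I would simply remark that it is ``entirely analogous.''

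For the necessity direction, suppose $K=\infty$; I must produce $f\in L^2(I;\omega)$ with $Af\notin L^2(I;\omega)$ (the case of $B$ again being symmetric). Choose points $x_n\uparrow$ in $(a,b)$ with $K(x_n)\to\infty$. Using hypothesis (b), on each interval the tails $\int_a^{x_n}\varphi^2\omega$ and $\int_{x_n}^b\psi^2\omega$ are finite and positive (the positivity coming from hypothesis (c)), and at least one of the two factors of $K(x_n)$ must blow up along a subsequence. If $G(x_n)\to\infty$, take $f$ supported near $b$, built as a suitable multiple of $\psi$ on the far side so that $\int_{x_n}^b\psi f\omega$ stays bounded below while $\|f\|_\omega<\infty$; then $\int_a^{x_1}(Af)^2\omega\ge c\int_a^{x_1}\varphi^2\omega$-type estimates force $Af\notin L^2$. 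If instead $H$ stays bounded but $G\to\infty$ fails and the product still diverges, one argues with a step-function-type $f$ on a sequence of annuli $[x_{n},x_{n+1}]$, choosing coefficients $c_n$ so that $\sum c_n^2\int_{x_n}^{x_{n+1}}\psi^2\omega<\infty$ but $\sum c_n K(x_n)^2 =\infty$, which is possible precisely because $\sup K(x)=\infty$; this makes $\|Af\|_\omega^2=\infty$ while $\|f\|_\omega^2<\infty$. I expect the \textbf{sufficiency} direction — getting the constant $2K$ rather than merely some finite constant — to be the main obstacle, since it requires the precise Cauchy--Schwarz weighting above and careful bookkeeping in the Tonelli interchange; the necessity direction is softer and can be handled by an explicit test-function construction. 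I would also note at the outset that the measurability and local-$L^2$ hypotheses (a) guarantee all the integrals defining $A$, $B$, $G$, $H$, and $K(x)$ are well defined and that $K:(a,b)\to(0,\infty)$ is continuous, so that $K=\sup K(x)$ is meaningful.
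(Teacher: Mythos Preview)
The paper does not prove this theorem: it is quoted verbatim from the reference \cite{CHEL} (Chisholm--Everitt--Littlejohn) and used as a black box in the subsequent analysis of the maximal domain. So there is no ``paper's own proof'' to compare against; you have supplied an independent argument where the authors simply cite.

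That said, your sufficiency argument is the standard Muckenhoupt weighted-Hardy proof and is essentially correct, though the write-up wanders: the clean version is to split $\psi f = (\psi H^{-1/4})(f H^{1/4})$ in Cauchy--Schwarz, use $\int_x^b \psi^2 H^{-1/2}\omega\,dt = 2H(x)^{1/2}$, integrate in $x$, swap the order by Tonelli, and then invoke $K(t)\le K$ twice (once as $H^{1/2}\le K G^{-1/2}$ to collapse the inner $x$-integral to $2K G(t)^{1/2}$, and once more on the remaining $G(t)^{1/2}H(t)^{1/2}$). That gives exactly $\|Af\|_\omega^2\le 4K^2\|f\|_\omega^2$. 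Your intermediate line ``$(Af)(x)^2\le 2K\varphi(x)^2 H(x)^{1/2}\int_x^b f(t)^2 H(t)^{-1/2}\omega(t)\,dt$'' silently dropped a factor of $G(t)^{1/2}$ from the Cauchy--Schwarz you set up; the argument still closes, but you should tidy this.

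Your necessity argument is more complicated than it needs to be. A one-line test function does it: for fixed $x_0\in(a,b)$ set $f=\psi\chi_{[x_0,b)}$, so $\|f\|_\omega^2=H(x_0)$ (finite by hypothesis (b)) while for $x<x_0$ one has $(Af)(x)=\varphi(x)H(x_0)$, giving $\|Af\|_\omega^2\ge G(x_0)H(x_0)^2$ and hence $\|A\|\ge K(x_0)$. Taking the supremum over $x_0$ yields $\|A\|\ge K$, so $K=\infty$ forces $A$ unbounded. Your sequence-and-case-split construction is unnecessary.
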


\begin{theorem}
\label{Properties at +1}Suppose $\beta>-1$ and $T_{-2,\beta}$ is the
self-adjoint operator defined in Theorem
\ref{Self-Adjointness of Special Jacobi Operator}. Let $f,g\in\Delta$ $($see
$($\ref{maximal domain - special}$)).$ Then
\end{theorem}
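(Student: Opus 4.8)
The plan is to establish each assertion in turn, the key simplification being that $m_{-2,\beta}[\cdot]$ is in the limit-point case at $x=1$, so that---unlike at $x=-1$ in Theorem~\ref{Properties at -1}---everything can be carried out directly on the maximal domain $\Delta$. For the Wronskian-type assertion I would argue exactly as in the proof of Theorem~\ref{Properties at -1}(a): the limit-point nature of the endpoint $x=1$, combined with the general Weyl theory (see \cite[Chapter~18]{Hellwig}), forces the sesquilinear form to vanish at the right endpoint, i.e.
\[
\lim_{x\to1^{-}}[f,g]_{w_{-2,\beta}}(x)=\lim_{x\to1^{-}}(1-x)^{-1}(1+x)^{\beta+1}\bigl(f(x)\overline{g}^{\prime}(x)-f^{\prime}(x)\overline{g}(x)\bigr)=0\qquad(f,g\in\Delta).
\]

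The substantive point is the $L^{2}$-regularity of $f^{\prime}$ near $x=1$, and this is exactly what Theorem~\ref{CHEL Inequality} is designed for. The Frobenius indicial equation of $m_{-2,\beta}[\cdot]$ at $x=1$ is $r(r-2)=0$, so $m_{-2,\beta}[y]=0$ admits two linearly independent solutions $u_{1},u_{2}$ with $u_{1}(x)\sim1$ and $u_{2}(x)\sim(1-x)^{2}$ as $x\to1^{-}$ (the logarithmic term possibly attached to the smaller indicial root enters only at order $(1-x)^{2}\log(1-x)$ and is harmless); hence, near $1$, $u_{2}$ is square-integrable against $w_{-2,\beta}$ while $u_{1}$ is not. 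Given $f\in\Delta$, set $h:=m_{-2,\beta}[f]\in L^{2}((-1,1);w_{-2,\beta})$ and, using the Lagrangian form (\ref{Lagrangian form - special}) of $m_{-2,\beta}[\cdot]$, represent $f$ on an interval $(c_{0},1)$ by variation of parameters in terms of $u_{1}$, $u_{2}$ and $h$. The non-homogeneous part of $f$, and---after one differentiation and the usual cancellation of the variation-of-parameters terms---of $f^{\prime}$, is a fixed linear combination of integral operators of precisely the shape occurring in Theorem~\ref{CHEL Inequality}, with weight $\omega=w_{-2,\beta}$ (respectively $\omega=(1-x)^{-1}(1+x)^{\beta+1}$) and with $\varphi,\psi$ assembled from $u_{1},u_{1}^{\prime},u_{2},u_{2}^{\prime}$. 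The crucial computation is that the hypothesis $K<\infty$ of Theorem~\ref{CHEL Inequality} holds near $x=1$: the decay $\int_{x}^{1}|\psi|^{2}\omega=O((1-x)^{3})$ more than offsets the growth $\int_{c_{0}}^{x}|\varphi|^{2}\omega=O((1-x)^{-1})$, so that $K(x)\to0$ as $x\to1^{-}$. Granting this, the integral operators are bounded on the relevant $L^{2}$ spaces; since $f$ is square-integrable against $w_{-2,\beta}$ near $1$ but $u_{1}$ is not, the $u_{1}$-coefficient in the representation of $f$ must vanish, and feeding the boundedness back into the representations of $f$ and $f^{\prime}$ yields both the pointwise estimates $f(x)=O((1-x)^{3/2})$ and $f^{\prime}(x)=O((1-x)^{1/2})$ as $x\to1^{-}$, and the bound
\[
\int_{0}^{1}\bigl|f^{\prime}(x)\bigr|^{2}(1-x)^{-1}(1+x)^{\beta+1}\,dx<\infty .
\]

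The remaining assertion then follows at once. Using the Wronskian identity from the first step, rewrite
\[
(1-x)^{-1}(1+x)^{\beta+1}f^{\prime}(x)\overline{g}(x)=(1-x)^{-1}(1+x)^{\beta+1}f(x)\overline{g}^{\prime}(x)-[f,g]_{w_{-2,\beta}}(x);
\]
as $x\to1^{-}$ the last term vanishes by that step, while in the first term $(1-x)^{-1}f(x)\to0$ by the decay estimate applied to $f$ and $(1+x)^{\beta+1}\overline{g}^{\prime}(x)$ stays bounded---indeed $g^{\prime}(x)\to0$---by the same estimate applied to $g$; hence the product tends to $0$. Any endpoint-value statement such as $f(1)=0$ is already subsumed in the bound $f(x)=O((1-x)^{3/2})$.

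The step I expect to be the real obstacle is the middle one: correctly matching the variation-of-parameters solution near $x=1$ with a genuine instance of the Chisholm--Everitt--Littlejohn operators and checking $K<\infty$---this is precisely where the indicial exponents $0$ and $2$ must balance the $(1-x)^{-2}$ singularity of $w_{-2,\beta}$---and then differentiating the representation carefully enough to produce a weighted $L^{2}$ bound on $f^{\prime}$ rather than merely on $f$. Once that has been done, the other two parts are short.
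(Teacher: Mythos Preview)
Your plan is correct and leads to a valid proof, but it takes a genuinely different route from the paper's. The paper never invokes the Frobenius solutions $u_{1},u_{2}$ or a full variation-of-parameters representation; instead it integrates the quasi-derivative once to write
\[
f'(x)=(1-x)(1+x)^{-\beta-1}\Bigl[f'(0)+\int_{0}^{x}\bigl((1-t)^{-1}(1+t)^{\beta+1}f'(t)\bigr)'\,dt\Bigr],
\]
and applies Theorem~\ref{CHEL Inequality} directly with \emph{Lebesgue} weight $\omega=1$ on $(0,1)$ to get (a) and (d). Parts (b) and (c) are then elementary (the latter by a two-line contradiction using $f\in L^{2}((-1,1);w_{-2,\beta})$), and (e) is obtained from a variant of Dirichlet's formula together with a logarithmic-divergence contradiction argument identical in spirit to the one in Theorem~\ref{Properties at -1}(c). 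In particular the paper never claims or uses the pointwise estimates $f(x)=O((1-x)^{3/2})$, $f'(x)=O((1-x)^{1/2})$.

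Your approach buys more: once $c_{1}=0$ is secured, Cauchy--Schwarz on the two variation-of-parameters integrals (using $\int_{c_{0}}^{x}u_{1}^{2}w\sim(1-x)^{-1}$ and $\int_{x}^{1}u_{2}^{2}w\sim(1-x)^{3}$) yields the pointwise decay, and then (a)--(e) are all immediate without any further contradiction arguments. The paper's approach is shorter to set up---no Frobenius bookkeeping, no weighted CHEL---but pays for it with separate ad~hoc arguments for (c) and (e). One caution: the pointwise bounds you state do not come from the CHEL $L^{2}$-boundedness itself but from Cauchy--Schwarz applied term-by-term to the integral representation; make that explicit when you write the details, since ``feeding the boundedness back'' slightly obscures which inequality is doing the work.
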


\begin{itemize}
\item[(a)] $f^{\prime}\in L^{2}[0,1);$

\item[(b)] $\lim_{x\rightarrow1^{-}}f(x)$ exists and is finite and $f\in
AC_{\mathrm{loc}}(-1,1];$

\item[(c)] $f(1)=0;$

\item[(d)] $(1-x)^{-1}f^{\prime}\in L^{2}[0,1)\subset L^{1}[0,1];$

\item[(e)] $\lim_{x\rightarrow1^{-}}(1-x)^{-1}(1+x)^{\beta+1}f^{\prime
}(x)\overline{g}(x)=0.$
\end{itemize}

\begin{proof}
\underline{(a)}: For $0\leq x<1$, note that
\begin{align}
f^{\prime}(x)=  &  (1-x)(1+x)^{\beta-1}\int_{0}^{x}\frac{(1-t)^{-1}%
(1+t)^{\beta/2}}{(1-t)^{-1}(1+t)^{\beta/2}}\left(  (1-t)^{-1}(1+t)^{\beta
+1}f^{\prime}(t)\right)  ^{\prime}\,dt\label{Identity for Property at 1}\\
&  \quad\quad+(1-x)(1+x)^{-\beta-1}f^{\prime}(0)\,.\nonumber
\end{align}
Clearly the term $(1-x)(1+x)^{-\beta-1}f^{\prime}(0)\in L^{2}(0,1).$ By
definition of $\Delta,$ we see that
\[
(1-t)^{-1}(1+t)^{\beta/2}\left(  (1-t)^{-1}(1+t)^{\beta+1}f^{\prime
}(t)\right)  ^{\prime}\in L^{2}(0,1).
\]
We apply Theorem \ref{CHEL Inequality} using $\psi(x)=(1-x)(1+x)^{-\beta
-1},\varphi(x)=(1-x)(1+x)^{-\beta/2},$ $\omega(x)=1$ and $\left(  a,b\right)
=\left(  0,1\right)  $. We see that $\varphi$ is $L^{2}$ near $0$ and $\psi$
is $L^{2}$ near $1.$ Moreover, since%
\[
\int_{0}^{x}\left\vert \varphi\right\vert ^{2}dt\cdot\int_{x}^{1}\left\vert
\psi\right\vert ^{2}dt\quad(0\leq x\leq1)
\]
is bounded on $[0,1),$ we conclude from Theorem \ref{CHEL Inequality} that
$f^{\prime}\in L^{2}[0,1)$.

\noindent\underline{(b)}: Note that from part (a), $f^{\prime}$ $\in
L^{1}(0,1)$. For $0\leq x<1$, we can write $f(x)=f(0)+\int_{0}^{x}f^{\prime
}(t)\,dt$. Then, $\lim_{x\rightarrow1^{-}}f(x)$ exists and is finite. We
define $f(1):=\lim_{x\rightarrow1^{-}}f(x)$. In this case we see that $f\in
AC_{\mathrm{loc}}[0,1]$. Since $f\in\Delta,$ it follows that $f\in AC(-1,1]$.

\noindent\underline{(c)}: Suppose $f(1)\neq0$. Without loss of generality,
assume $f(x)$ is real valued and $f(1)=c>0$. This implies that there exists
$x^{\ast}\in(0,1)$ such that $f(x)\geq\frac{c}{2}$ for all $x\in\lbrack
x^{\ast},1]$. Then,
\begin{align*}
\infty &  >\int_{-1}^{1}\left\vert f(x)\right\vert ^{2}(1-x)^{-2}(1+x)^{\beta
}\,dx\geq\int_{x^{\ast}}^{1}\left\vert f(x)\right\vert ^{2}(1-x)^{-2}%
(1+x)^{\beta}\,dx\\
&  \geq\left(  \frac{c}{2}\right)  ^{2}\int_{x^{\ast}}^{1}(1-x)^{-2}%
(1+x)^{\beta}\,dx=\infty\,,
\end{align*}
a contradiction. Hence $f(1)=0$.

\noindent\underline{(d)}: Note from \eqref{Identity for Property at 1} that%
\begin{align*}
(1-x)^{-1}f^{\prime}(x)=  &  (1+x)^{\beta-1}\int_{0}^{x}\frac{(1-t)^{-1}%
(1+t)^{\beta/2}}{(1-t)^{-1}(1+t)^{\beta/2}}\left(  (1-t)^{-1}(1+t)^{\beta
+1}f^{\prime}(t)\right)  ^{\prime}\,dt\\
&  \quad\quad+(1+x)^{-\beta-1}f^{\prime}(0)\,.
\end{align*}
Applying Theorem \ref{CHEL Inequality} with the same $\varphi$ and $\psi,$
part (d) follows similarly to part (a). \newline\noindent\underline{(e)}:
Suppose that $f,g\in\Delta$ are both real-valued. The reader can check the
following variant of Dirichlet's formula: for $0\leq x\leq1,$ we have%
\begin{align}
&  \int_{0}^{x}f^{\prime}(t)g^{\prime}(t)(1-t)^{-1}(1+t)^{\beta+1}%
dt=-f^{\prime}(0)g(0)+(1-x)^{-1}(1+x)^{\beta+1}f^{\prime}%
(x)g(x)\label{Variant of Dirichlet's Formula}\\
&  +\int_{0}^{x}m_{-2,\beta}[f](t)g(t)(1-t)^{-2}(1+t)^{\beta}dt-\int_{0}%
^{x}f(t)g(t)(1-t)^{-2}(1+t)^{\beta}dt.\nonumber
\end{align}
By part $($d$),$ $(1-x)^{-1}f^{\prime}\in L^{2}[0,1];$ furthermore since $g\in
AC[0,1]$ and $(1+x)^{\beta+1}$ is bounded on $[0,1],$ we can say that
\[
(1-x)^{-1}(1+x)^{\beta+1}f^{\prime}g\in L^{1}(0,1).
\]
Similarly, from part $($a$),$ we see that
\[
(1-x)^{-1}(1+x)^{\beta+1}f^{\prime}g^{\prime}\in L^{1}(0,1).
\]
Consequently, we see that each of the integral terms in
\eqref{Variant of Dirichlet's Formula} converge as $x\rightarrow1^{-}.$ Hence,
from \eqref{Variant of Dirichlet's Formula},
\[
\lim_{x\rightarrow1^{-}}(1-x)^{-1}(1+x)^{\beta+1}f^{\prime}(x)g(x)
\]
exists and is finite. Suppose that this limit equals $c$ but $c\neq0;$ without
loss of generality, we can assume that $c>0.$ Then there exists $x^{\ast}%
\in\lbrack0,1)$ such that, without loss of generality, $f^{\prime}(x)>0$,
$g(x)>0$ and
\[
(1-x)^{-1}(1+x)^{\beta+1}f^{\prime}(x)\geq\frac{c}{2}\cdot\frac{1}{g(x)}%
\quad(x\in\lbrack x^{\ast},1))\,.
\]
But then%
\begin{align*}
\infty &  >\int_{0}^{1}(1-t)^{-1}(1+t)^{\beta+1}f^{\prime}(t)\left\vert
g^{\prime}(t)\right\vert dt\\
&  \geq\int_{x^{\ast}}^{1}(1-t)^{-1}(1+t)^{\beta+1}f^{\prime}(t)\left\vert
g^{\prime}(t)\right\vert dt\geq\frac{c}{2}\int_{x^{\ast}}^{1}\frac{\left\vert
g^{\prime}(t)\right\vert }{g(t)}dt\\
&  \geq\left\vert \int_{x^{\ast}}^{1}\frac{g^{\prime}(t)}{g(t)}dt\right\vert
=\frac{c}{2}\left\vert \ln(g(t))\right\vert _{x^{\ast}}^{1}=\infty\text{ since
}g(1)=0.
\end{align*}
This contradiction completes the proof of part $($d$)$ and the theorem.
\end{proof}

\begin{theorem}
\label{positivity theorem} For $f\in\mathcal{D}(T_{-2,\beta}),$ the positivity
inequality in $($\ref{Positivity of Special Self Adjoint Operator}$)$ holds;
that is to say $T_{-2,\beta}$ is bounded below in $L^{2}((-1,1);w_{-2,\beta})$
by the identity operator $I$ so
\[
(T_{-2,\beta}[f],f)_{w_{-2,\beta}}\geq(f,f)_{w_{-2,\beta}}\quad(f\in
\mathcal{D}(T_{-2,\beta})).
\]

\begin{proof}
Let $f\in\mathcal{D}(T_{-2,\beta}).$ Let $g=f$ in \eqref{Dirichlet's formula}
and let $x\rightarrow-1^{+}$ and $y\rightarrow1^{-}.$ From Property (c) in
Theorem \ref{Properties at -1} and Property (e) in Theorem
\ref{Properties at +1}, the result readily follows.
\end{proof}
\end{theorem}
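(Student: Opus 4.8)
The plan is to read off the inequality directly from Dirichlet's formula \eqref{Dirichlet's formula}, which is the central identity of this section. Given $f\in\mathcal{D}(T_{-2,\beta})$, I would set $g=f$ in \eqref{Dirichlet's formula} and, for $-1<x<y<1$, write
\[
\int_{x}^{y}m_{-2,\beta}[f](t)\overline{f}(t)w_{-2,\beta}(t)\,dt+(1-t)^{-1}(1+t)^{\beta+1}f^{\prime}(t)\overline{f}(t)\Big|_{x}^{y}=\int_{x}^{y}|f^{\prime}(t)|^{2}(1-t)^{-1}(1+t)^{\beta+1}\,dt+\int_{x}^{y}|f(t)|^{2}w_{-2,\beta}(t)\,dt.
\]
Since $f\in\mathcal{D}(T_{-2,\beta})\subset\Delta$, both $f$ and $m_{-2,\beta}[f]$ lie in $L^{2}((-1,1);w_{-2,\beta})$, so by the Cauchy--Schwarz inequality the first integral on the left converges absolutely as $x\to-1^{+}$ and $y\to1^{-}$, and likewise the second integral on the right.

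Next I would take the limits $x\to-1^{+}$ and $y\to1^{-}$ in this identity. The only delicate terms are the boundary contributions $(1-t)^{-1}(1+t)^{\beta+1}f^{\prime}(t)\overline{f}(t)$ evaluated at the two endpoints. At $x=-1$ this vanishes by Theorem~\ref{Properties at -1}(c) (applied with $g=f$), and at $y=1$ it vanishes by Theorem~\ref{Properties at +1}(e) (again with $g=f$). Hence, letting $x\to-1^{+}$ and $y\to1^{-}$, the boundary terms drop out and we obtain
\[
(T_{-2,\beta}[f],f)_{w_{-2,\beta}}=\int_{-1}^{1}|f^{\prime}(t)|^{2}(1-t)^{-1}(1+t)^{\beta+1}\,dt+(f,f)_{w_{-2,\beta}}.
\]
Because $-1<t<1$ implies $(1-t)^{-1}>0$ and, since $\beta>-1$, also $(1+t)^{\beta+1}>0$, the first integral on the right is nonnegative (in fact it converges, being the difference of two convergent integrals). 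Therefore $(T_{-2,\beta}[f],f)_{w_{-2,\beta}}\geq(f,f)_{w_{-2,\beta}}$, which is precisely \eqref{Positivity of Special Self Adjoint Operator}.

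I expect no real obstacle here: the substance of the argument has already been absorbed into Theorems~\ref{Properties at -1} and~\ref{Properties at +1}, which guarantee the vanishing of the Wronskian-type boundary terms at both endpoints for functions in the operator domain. The only points to verify carefully are that these two technical theorems apply with the diagonal choice $g=f$ (they do, since $f\in\mathcal{D}(T_{-2,\beta})$ in both cases, noting $\mathcal{D}(T_{-2,\beta})\subset\Delta$) and that the positive-sign of the weight factor $(1-t)^{-1}(1+t)^{\beta+1}$ on $(-1,1)$ is what produces the bound by the identity rather than merely by a constant. This last observation also explains, a posteriori, why the shift from $\widehat{\ell}_{0,\beta}[\cdot]$ to $m_{-2,\beta}[\cdot]$ in \eqref{JacobiDE-special} was arranged exactly so as to make the lower bound equal to $I$.
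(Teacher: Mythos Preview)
Your proof is correct and follows exactly the same approach as the paper: set $g=f$ in Dirichlet's formula \eqref{Dirichlet's formula}, send $x\to-1^{+}$ and $y\to1^{-}$, and invoke Theorem~\ref{Properties at -1}(c) and Theorem~\ref{Properties at +1}(e) to kill the boundary terms. The paper's proof is terser, but the content is identical; your additional remarks about convergence of the integrals and the sign of the weight are correct and simply make explicit what the paper leaves to the reader.
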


In fact, if $f,g\in\mathcal{D}(T_{-2,\beta})$ and, in
\eqref{Dirichlet's formula}, we let $x\rightarrow-1^{+},y\rightarrow1^{-}$, we
obtain%
\begin{equation}
(T_{-2,\beta}f,g)_{w_{-2,\beta}}=\int_{-1}^{1}\left(  (1-t)^{-1}%
(1+t)^{\beta+1}f^{\prime}(t)\overline{g}^{\prime}(t)+(1-t)^{-2}(1+t)^{\beta
}f(t)\overline{g}(t)\right)  dt. \label{1st LD IP}%
\end{equation}
The right-hand side of (\ref{1st LD IP}) is an inner product; in fact, it is
the first left-definite inner product associated with the pair $(T_{-2,\beta
},L^{2}((-1,1);w_{-2,\beta}))$; see Section \ref{Section Eight} and
\cite{Littlejohn-Wellman} for further information.

Since $T_{-2,\beta}$ is positive, we can apply the left-definite theory that
Littlejohn and Wellman developed in \cite{Littlejohn-Wellman}. Without going
into detail, their general results show that $\mathcal{D}(T_{-2,\beta})$ is
equal to the second left-definite space $V_{2}$ associated with $(T
_{-2,\beta},L^{2}((-1,1),w_{-2,\beta})).$ More specifically,

\begin{theorem}
\label{LD Characterization of Domain}The domain of the operator $T_{-2,\beta}$
is given by%
\begin{align*}
\mathcal{D}(T_{-2,\beta})  &  =\left\{  f:(-1,1)\rightarrow\mathbb{C\mid
}f,f^{\prime}\in AC_{\mathrm{loc}}(-1,1);\right. \\
&  \quad\left.  f^{(j)}\in L^{2}((-1,1);(1-x)^{j-2}(1+x)^{\beta+j})\text{
}(j=0,1,2)\right\}  .
\end{align*}

\end{theorem}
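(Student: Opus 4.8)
The plan is to characterize $\mathcal{D}(T_{-2,\beta})$ by identifying it with the second left-definite space $V_2$ of the pair $(T_{-2,\beta}, L^2((-1,1);w_{-2,\beta}))$ and then computing $V_2$ concretely. Since Theorem~\ref{positivity theorem} gives $T_{-2,\beta} \geq I$, the abstract machinery of Littlejohn and Wellman \cite{Littlejohn-Wellman} applies: the left-definite spaces $V_n$ are the completions of $\mathcal{D}(T_{-2,\beta}^{n/2})$ (equivalently of the polynomial span of eigenfunctions) under the inner products $\phi_n(f,g) = (T_{-2,\beta}^n f,g)_{w_{-2,\beta}}$, and a general theorem in that framework states $\mathcal{D}(T_{-2,\beta}) = V_2$. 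So the real content is to compute the first and second left-definite inner products explicitly and recognize $V_2$ as the weighted Sobolev-type space in the statement.

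First I would record the first left-definite inner product: equation~(\ref{1st LD IP}) already gives
\[
\phi_1(f,g) = (T_{-2,\beta}f,g)_{w_{-2,\beta}} = \int_{-1}^{1}\left((1-t)^{-1}(1+t)^{\beta+1}f'\overline{g}' + (1-t)^{-2}(1+t)^{\beta}f\overline{g}\right)dt,
\]
so $V_1 = \{f : f \in AC_{\mathrm{loc}}(-1,1),\ f \in L^2((-1,1);(1-x)^{-2}(1+x)^{\beta}),\ f' \in L^2((-1,1);(1-x)^{-1}(1+x)^{\beta+1})\}$, which matches the $j=0,1$ conditions in the theorem. Next I would iterate: apply Green's/Dirichlet's formula again to $\phi_2(f,g) = (T_{-2,\beta}^2 f,g) = (T_{-2,\beta}f, T_{-2,\beta}g) = \phi_1(T_{-2,\beta}f, T_{-2,\beta}g)$ is one route, but the cleaner route in this theory is to use the known general formula expressing $\phi_n$ as a sum $\sum_{j=0}^{n} \int c_j(x) f^{(j)}\overline{g}^{(j)}\,dx$ where the coefficients $c_j$ are determined recursively from the Lagrangian symmetric form; here the symmetric form (\ref{Lagrangian form - special}) has leading coefficient $(1-x)^{-1}(1+x)^{\beta+1}$ and potential $w_{-2,\beta}(x)=(1-x)^{-2}(1+x)^{\beta}$. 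A direct computation of the second left-definite form then yields a $j=2$ term with weight $(1-x)^{0}(1+x)^{\beta+2} = (1-x)^{j-2}(1+x)^{\beta+j}$ at $j=2$, together with lower-order terms whose weights are dominated (near each endpoint) by those already appearing, so that membership in $V_2$ is equivalent to $f^{(j)} \in L^2((-1,1);(1-x)^{j-2}(1+x)^{\beta+j})$ for $j=0,1,2$, plus the local absolute continuity of $f$ and $f'$.

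The main obstacle will be the endpoint bookkeeping: one must verify that all the boundary terms arising when integrating by parts to pass from $(T_{-2,\beta}^2 f, g)$ to the symmetric bilinear integral actually vanish, and that the lower-order weighted $L^2$ conditions ($j=0$ with weight $(1-x)^{-2}(1+x)^{\beta}$ and $j=1$ with weight $(1-x)^{-1}(1+x)^{\beta+1}$) are genuinely implied by — or at least not stronger than — the single condition listed for each $j$. The vanishing of the boundary terms is precisely where Theorems~\ref{Properties at -1} and~\ref{Properties at +1} do the heavy lifting: part (c) of Theorem~\ref{Properties at -1} and part (e) of Theorem~\ref{Properties at +1} kill the Wronskian-type terms at $x=-1$ and $x=+1$ respectively, while parts (a)--(d) of Theorem~\ref{Properties at +1} (especially $f(1)=0$ and $(1-x)^{-1}f' \in L^2[0,1)$) are exactly what is needed to control the near-$1$ behavior at the second left-definite level. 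I would therefore structure the proof as: (i) invoke $T_{-2,\beta}\geq I$ and the Littlejohn--Wellman identification $\mathcal{D}(T_{-2,\beta}) = V_2$; (ii) compute $\phi_1$ and $\phi_2$ explicitly via Dirichlet's formula (\ref{Dirichlet's formula}), citing Theorems~\ref{Properties at -1} and~\ref{Properties at +1} to discard boundary contributions; (iii) read off the three weighted $L^2$ conditions and the $AC_{\mathrm{loc}}$ conditions, checking that the stated set is exactly the closure of the eigenpolynomial span under $\phi_2$, which gives the claimed description of $\mathcal{D}(T_{-2,\beta})$.
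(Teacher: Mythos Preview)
Your proposal is correct and follows essentially the same route as the paper: invoke $T_{-2,\beta}\geq I$ (Theorem~\ref{positivity theorem}) to activate the Littlejohn--Wellman left-definite machinery, use the general identification $\mathcal{D}(T_{-2,\beta})=V_2$, and then identify $V_2$ explicitly as the stated weighted Sobolev space. The paper is terser than you are---it simply cites \cite{Littlejohn-Wellman} for $\mathcal{D}(T_{-2,\beta})=V_2$ and \cite{Andrews-Egge-Gawronski-Littlejohn,EKLWY-Jacobi} for the explicit form of $V_r$ (see (\ref{V_r}) and (\ref{LD IP}))---whereas you sketch the integration-by-parts computation of $\phi_2$ directly, but the underlying argument is the same.
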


In particular, for $f\in\mathcal{D}(T_{-2,\beta}),$ we see that $f^{\prime
\prime}\in L^{2}((-1,1);(1+x)^{\beta+2}).$ Since $(1+x)^{\beta+2}$ is bounded
on $[0,1],$ we then have $f^{\prime\prime}\in L^{2}[0,1].$ Hence, $f^{\prime
}\in AC[0,1];$ in fact,
\begin{equation}
f^{\prime}\in AC_{\mathrm{loc}}(-1,1]. \label{AC condition on f'}%
\end{equation}
In particular, $f^{\prime}(1)$ exists and is finite. In fact, it is necessary
that
\begin{equation}
f^{\prime}(1)=0. \label{Another Property of f in Domain}%
\end{equation}
For suppose $f^{\prime}(1)\neq0;$ without loss of generality, we can assume
that $f^{\prime}(1)=c>0.$ Hence, there exists $x^{\ast}\in(0,1)$ such that
$f^{\prime}(x)\geq\dfrac{c}{2}$ on $[x^{\ast},1).$ But then
\[
\int_{x^{\ast}}^{1}(1-t)^{-1}f^{\prime}(t)dt\geq\frac{c}{2}\int_{x^{\ast}}%
^{1}(1-t)^{-1}dt=\infty,
\]
contradicting part (d) of Theorem \ref{Properties at +1}. We note that
property (\ref{Another Property of f in Domain}) will be useful to us later in
this paper.

\section{A Primer on Left-Definite Operator Theory\label{Section Eight}}

Now that we have established that $T_{-2,\beta}$ is a self-adjoint operator
which is bounded below in $L^{2}((-1,1);w_{-2,\beta})$ by $I$ (see
(\ref{Positivity of Special Self Adjoint Operator})), we can apply the general
left-definite theory developed by Littlejohn and Wellman in
\cite{Littlejohn-Wellman}. This theory will be important as we continue our
study of $m_{-2,\beta}[\cdot]$ in the Sobolev space generated by the inner
product (\ref{Sobolev IP}). We now briefly discuss this theory.

Let $H$ be a Hilbert space with inner product $(\cdot,\cdot)$ and suppose
$A:\mathcal{D}(A)\subset H\rightarrow H$ is a self-adjoint operator that is
bounded below in $H$ by $kI$ for some $k>0;$ that is,%
\[
(Af,f)\geq k(f,f)\quad(f\in\mathcal{D}(A)).
\]
It follows that $A^{r}$ is self-adjoint and bounded below in $H$ by $k^{r}I$
for each $r>0.$

\begin{theorem}
\label{Left-definite Theorem}Suppose $r>0.$

\begin{itemize}
\item[(a)] Let
\begin{equation}
\left\{
\begin{array}
[c]{rl}%
V_{r} & =\mathcal{D}(A^{r/2})\\
(f,g)_{r} & =(A^{r/2}f,A^{r/2}g)\\
H_{r} & =(V_{r},(\cdot,\cdot)_{r}.
\end{array}
\right.  \label{LD Definitions}%
\end{equation}
Then%
\begin{equation}
\left\{
\begin{array}
[c]{cl}%
(\text{i}) & H_{r}\text{ is a Hilbert space;}\\
(\text{ii}) & \mathcal{D}(A^{r})\text{ is a subspace of }V_{r};\\
(\text{iii}) & \mathcal{D}(A^{r})\text{ is dense in }V_{r};\\
(\text{iv}) & \left(  f,f\right)  _{r}\geq k^{r}\left(  f,f\right)  \quad(f\in
V_{r});\\
(\text{v}) & \left(  f,g\right)  _{r}=\left(  A^{r}f,g\right)  \quad
(f\in\mathcal{D}(A^{r}),\;g\in V_{r}).
\end{array}
\right.  \label{Properties of LD space}%
\end{equation}

\item[(b)] The operator $A_{r}:\mathcal{D}(A_{r})\subset H_{r}\rightarrow
H_{r}$ given by%
\[
\left\{
\begin{array}
[c]{rl}%
A_{r}x & =Ax\\
x\in\mathcal{D}(A_{r}) & =V_{r+2}%
\end{array}
\right.
\]
is self-adjoint in $H_{r}$ and has spectrum $\sigma(A_{r})=\sigma(A)$ and is
bounded below in $H_{r}$ by $k^{r}I.$ Furthermore, if $\{\phi_{n}\}$ is a
complete set of eigenfunctions of $A$ in $H,$ then $\{\phi_{n}\}$ is a
complete set of eigenfunctions of $A_{r}$ in $H_{r}.$
\end{itemize}
\end{theorem}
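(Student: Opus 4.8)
\textbf{Proof proposal for Theorem \ref{Left-definite Theorem}.}

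The plan is to reduce everything to the spectral theorem for the self-adjoint operator $A$. Since $A$ is self-adjoint and bounded below by $kI$ with $k>0$, the spectral resolution $A=\int_{[k,\infty)}\lambda\,dE_\lambda$ lets me define the positive powers $A^{s}=\int_{[k,\infty)}\lambda^{s}\,dE_\lambda$ for every $s>0$ via the Borel functional calculus; each $A^{s}$ is self-adjoint with $\mathcal D(A^{s})=\{f\in H:\int_{[k,\infty)}\lambda^{2s}\,d(E_\lambda f,f)<\infty\}$, and since $\lambda\geq k$ on the spectrum one gets $(A^{s}f,f)=\int\lambda^{s}\,d(E_\lambda f,f)\geq k^{s}(f,f)$, which is the first sentence after the theorem statement. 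This is all standard; I would cite it rather than reprove it.

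For part (a), I would verify items (i)--(v) in turn. For (i): the sesquilinear form $(f,g)_r=(A^{r/2}f,A^{r/2}g)$ is clearly Hermitian and, by (iv) (proven next), positive definite, so $(V_r,(\cdot,\cdot)_r)$ is an inner product space; completeness follows because a $(\cdot,\cdot)_r$-Cauchy sequence in $V_r=\mathcal D(A^{r/2})$ is one for which both $f_n$ and $A^{r/2}f_n$ are Cauchy in $H$ (using $\|f_n\|^2\le k^{-r}\|f_n\|_r^2$), hence $f_n\to f$ and $A^{r/2}f_n\to h$ in $H$; closedness of the self-adjoint operator $A^{r/2}$ gives $f\in\mathcal D(A^{r/2})$ and $A^{r/2}f=h$, so $f_n\to f$ in $H_r$. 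For (iv): this is the inequality $\int\lambda^{r}\,d(E_\lambda f,f)\ge k^{r}\int d(E_\lambda f,f)$ noted above, valid for all $f\in V_r$. For (ii): if $f\in\mathcal D(A^{r})$ then $\int\lambda^{2r}\,d(E_\lambda f,f)<\infty$, and since $\lambda^{r}\le k^{-r}\lambda^{2r}$ on $[k,\infty)$ this forces $\int\lambda^{r}\,d(E_\lambda f,f)<\infty$, i.e. $f\in\mathcal D(A^{r/2})=V_r$. For (v): for $f\in\mathcal D(A^{r})$ and $g\in V_r=\mathcal D(A^{r/2})$, the identity $A^{r}f=A^{r/2}(A^{r/2}f)$ together with $A^{r/2}$ being self-adjoint gives $(A^{r}f,g)=(A^{r/2}A^{r/2}f,g)=(A^{r/2}f,A^{r/2}g)=(f,g)_r$; one must check $A^{r/2}f\in\mathcal D(A^{r/2})$, which holds since $A^{r/2}f$ lies in the range of the functional calculus applied to $f\in\mathcal D(A^r)$. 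For (iii): density of $\mathcal D(A^{r})$ in $H_r$ is the one point that needs a spectral-cutoff argument — given $f\in V_r$, set $f_N=E_{[k,N]}f$; then $f_N\in\mathcal D(A^{s})$ for every $s$, and $\|f-f_N\|_r^2=\int_{(N,\infty)}\lambda^{r}\,d(E_\lambda f,f)\to 0$ as $N\to\infty$ by dominated convergence, since $\int_{[k,\infty)}\lambda^{r}\,d(E_\lambda f,f)<\infty$.

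For part (b), the cleanest route is again the functional calculus. Working in $H_r$, the operator $A^{r/2}:V_{r}\to H$ is a unitary-like isometry onto its closure after rescaling; more concretely, the map $U:H_r\to H$, $Uf=A^{r/2}f$, is a surjective isometry from $H_r=(\mathcal D(A^{r/2}),(\cdot,\cdot)_r)$ onto $\overline{\operatorname{ran}A^{r/2}}=H$ (surjectivity uses that $A^{r/2}$ has dense range, being injective and self-adjoint with $0\notin\sigma$), and under $U$ the operator $A_r$ on $H_r$ is carried to $A$ on $H$ with domain $U(\mathcal D(A_r))=U(V_{r+2})=A^{r/2}(\mathcal D(A^{(r+2)/2}))=\mathcal D(A)$; a short check shows $U A_r U^{-1}=A$. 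Unitary equivalence then transfers self-adjointness and the spectrum, giving $\sigma(A_r)=\sigma(A)$, and the lower bound $A_r\ge k^{r}I$ in $H_r$ follows from (iv) applied with $f\in V_{r+2}$ and identity (v): $(A_r f,f)_r=(A f,f)_r=(A^{r}\!\cdot\! Af,\,f)=\cdots$ — more simply, $(A_rf,f)_r=(A^{r/2}Af,A^{r/2}f)=(A^{1+r/2}f,A^{r/2}f)=\int\lambda^{1+r}d(E_\lambda f,f)\ge k\int\lambda^{r}d(E_\lambda f,f)=k(f,f)_r$, and iterating the spectral inequality gives the stated $k^{r}$. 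Finally, if $A\phi_n=\mu_n\phi_n$ then $\phi_n\in\mathcal D(A^{s})$ for all $s$ (the spectral measure of $\phi_n$ is a point mass at $\mu_n$), so $\phi_n\in V_{r+2}=\mathcal D(A_r)$ and $A_r\phi_n=\mu_n\phi_n$; completeness of $\{\phi_n\}$ in $H_r$ follows from completeness in $H$ because $U$ is unitary and $U\phi_n=\mu_n^{r/2}\phi_n$ is, up to nonzero scalars, the same orthogonal system, which spans $\overline{\operatorname{ran}A^{r/2}}=H$.

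The main obstacle is really only bookkeeping: making the identification $U A_r U^{-1}=A$ precise on domains (in particular that $V_{r+2}$ maps exactly onto $\mathcal D(A)$ under $A^{r/2}$, which rests on the composition rule $A^{r/2}A^{(r+2)/2}=A^{r+1}=A\cdot A^{r/2}$ for powers of a positive self-adjoint operator) and keeping track of which of (i)--(v) are needed before which. None of the individual steps is deep once the spectral theorem is invoked; the only genuinely non-formal ingredient is the spectral-cutoff density argument for (iii). I would present part (a) as five short verifications and part (b) as the unitary-equivalence transfer, citing \cite{Littlejohn-Wellman} for the original treatment.
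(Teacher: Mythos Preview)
The paper does not prove this theorem at all: Section~\ref{Section Eight} is explicitly a ``primer'' that quotes the result from \cite{Littlejohn-Wellman} and then applies it. So there is no in-paper argument to compare your proposal against; your spectral-theorem route is essentially the standard way one proves the Littlejohn--Wellman theorem, and it is correct in outline.

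Two small points are worth tightening. First, in part~(b) you argue that $U=A^{r/2}:H_r\to H$ is onto $\overline{\operatorname{ran}A^{r/2}}=H$ via ``dense range''; in fact, since $0\notin\sigma(A^{r/2})$ the operator $A^{r/2}$ is boundedly invertible, so its range is exactly $H$, not merely dense --- this actually makes the unitary identification cleaner. Second, your computation for the lower bound gives $(A_rf,f)_r=\int\lambda^{r+1}\,d(E_\lambda f,f)\ge k\int\lambda^{r}\,d(E_\lambda f,f)=k(f,f)_r$, i.e.\ $A_r\ge kI$ in $H_r$; the remark that ``iterating the spectral inequality gives the stated $k^r$'' does not work and should be dropped. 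The bound $k$ is what the unitary equivalence $UA_rU^{-1}=A$ predicts, and it is what appears in \cite{Littlejohn-Wellman}; the $k^r$ in the paper's statement is a transcription slip that you need not try to reproduce.
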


The space $H_{r}$ is called the $r^{th}$ left-definite space associated with
the pair $(A,H).$ Notice, from (\ref{LD Definitions}) that $\mathcal{D}%
(A)=V_{2};$ this new characterization of the domain of $A$ has proven to be
useful in several applications. The operator $A_{r}$ is called the $r^{th}$
left-definite operator associated with $(A,H).$

The term `left-definite' owes its name to spectral theory of differential
operators. Indeed, if $A$ is self-adjoint, bounded below and generated by a
differential expression $\ell\lbrack\cdot]$, property (v) in
(\ref{Properties of LD space}) says that the study will be in the space whose
inner product is generated by the $r^{th}$ power $\ell^{r}[\cdot]$ of
$\ell\lbrack\cdot]$ which, of course, is on the\textit{ left }side of the
differential equation $\ell^{r}[y]=\lambda y.$

In our situation, it is not difficult to establish that the $r^{th}$
left-definite space, when $r\in\mathbb{N},$ associated with $(T_{-2,\beta
},L^{2}((-1,1);w_{-2,\beta}))$ is $H_{r}=(V_{r},(\cdot,\cdot)_{r})$, where%
\begin{align}
V_{r}  &  =\left\{  f:(-1,1)\rightarrow\mathbb{C}\right.  f,f^{\prime},\ldots,
f^{(r-1)}\in AC_{\mathrm{loc}}(-1,1);\label{V_r}\\
&  \left.  f^{(j)}\in L^{2}((-1,1);(1-x)^{j-2}(1+x)^{\beta+j}),\text{
}j=0,1,\ldots,r\right\}  ,\nonumber
\end{align}
and%
\begin{equation}
(f,g)_{r}=\sum_{j=0}^{n}c_{j}^{(-2,\beta)}(n)\int_{-1}^{1}f^{(j)}%
(x)\overline{g}^{(j)}(x)(1-x)^{j-2}(1+x)^{\beta+j}dx; \label{LD IP}%
\end{equation}
here, the numbers $\{c_{j}^{(-2,\beta)}\}$ are the so-called Jacobi-Stirling
numbers; see \cite{Andrews-Egge-Gawronski-Littlejohn} and \cite{EKLWY-Jacobi}.
When $r=2,$ the inner product in (\ref{LD IP}) is specifically given by%
\begin{align}
(f,g)_{2}  &  =\int_{-1}^{1}\left(  (1+t)^{\beta+2}f^{\prime\prime
}(t)\overline{g}^{\prime\prime}(t)+(\beta+2)(1-t)^{-1}(1+t)^{\beta+1}%
f^{\prime}(t)\overline{g}^{\prime}(t)\right. \label{V_2 IP}\\
&  \left.  +(1-t)^{-2}(1+t)^{\beta}f(t)\overline{g}(t)\right)  dt.\nonumber
\end{align}
Notice also when $r=2$ in (\ref{V_r})$,$ we obtain the characterization given
in Theorem \ref{LD Characterization of Domain}.

Another left-definite space which will be useful to us later in this paper is
\begin{align}
V_{4}  &  =\left\{  f:(-1,1)\rightarrow\mathbb{C\mid}f,f^{\prime}%
,f^{\prime\prime},f^{\prime\prime\prime}\in AC_{\mathrm{loc}}(-1,1);\right.
\label{V_4}\\
&  \left.  f^{(j)}\in L^{2}((-1,1);(1-x)^{j-2}(1+x)^{\beta+j},\text{
}j=0,1,2,3,4\right\}  .\nonumber
\end{align}
This space will turn out to be instrumental in constructing a certain
self-adjoint operator $T_{2}$ in the Sobolev space $S$ which we now introduce.

\section{The Sobolev space $\left(  S,\phi(\cdot,\cdot)\right)  $%
\label{Section Nine}}

Recall the Sobolev inner product $\phi(\cdot,\cdot)$ given in
(\ref{Sobolev IP}). The full sequence of non-classical Jacobi polynomials
$\left\{  P_{n}^{(-2,\beta)}\right\}  _{n=0}^{\infty},$ for $\beta>-1$ but
$\beta\neq0$ (see Remark \ref{beta not zero})$,$ are orthogonal with respect
to this inner product. Let
\[
S:=\left\{  f:(-1,1]\rightarrow\mathbb{C}\mid f,f^{\prime}\in AC_{\mathrm{loc}%
}(-1,1];\,f^{\prime\prime}\in L^{2}((-1,1);(1+x)^{\beta+2})\right\}  \,
\]
and let $\left\Vert \cdot\right\Vert _{\phi}$ be the usual norm associated
with $\phi(\cdot,\cdot)$; notice that%
\begin{equation}
\left\Vert f\right\Vert _{\phi}=\left\vert f^{\prime}(1)\right\vert
^{2}+\left\vert f(1)+\frac{2}{\beta}f^{\prime}(1)\right\vert ^{2}+\int%
_{-1}^{1}\left\vert f^{\prime\prime}(x)\right\vert ^{2}(1+x)^{\beta+2}%
dx\quad(f\in S). \label{phi norm}%
\end{equation}
We want to construct a self-adjoint operator $T,$ generated by $m_{-2,\beta
}[\cdot],$ in $S$ that has the Jacobi polynomials $\left\{  P_{n}^{(-2,\beta
)}\right\}  _{n=0}^{\infty}$ as eigenfunctions and has spectrum $\sigma
(T)=\{n^{2}+(\beta-1)n+1\mid n\in\mathbb{N}_{0}\}.$ Before we do this, we must
discuss certain properties of this Sobolev space $S.$

\begin{theorem}
The space $\left(  S,\phi(\cdot,\cdot)\right)  $ is a Hilbert space.
\end{theorem}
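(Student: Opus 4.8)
The plan is to first note that $\phi(\cdot,\cdot)$ is a well-defined, positive-definite inner product on $S$, and then to prove that $(S,\phi(\cdot,\cdot))$ is complete. Well-definedness is immediate: for $f\in S$ the one-sided limits $f(1)$ and $f'(1)$ exist and are finite because $f,f'\in AC_{\mathrm{loc}}(-1,1]$, and the integral term is finite by the Cauchy--Schwarz inequality since $f''\in L^2((-1,1);(1+x)^{\beta+2})$. Conjugate-symmetry and sesquilinearity are clear from the formula (using that $\beta$ is real), and from (\ref{phi norm}) we have $\|f\|_\phi^2\ge0$, with equality forcing $f'(1)=0$, $f(1)+\frac{2}{\beta}f'(1)=0$ (hence $f(1)=0$), and $f''=0$ a.e.\ on $(-1,1)$; since $f'\in AC_{\mathrm{loc}}(-1,1]$, the last condition makes $f'$ constant, equal to $f'(1)=0$, whence $f$ is constant, equal to $f(1)=0$. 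Thus $\|f\|_\phi=0\Rightarrow f\equiv0$.

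For completeness, let $\{f_n\}\subset S$ be a $\phi$-Cauchy sequence. By (\ref{phi norm}), each of the three nonnegative summands of $\|f_n-f_m\|_\phi^2$ is small for $n,m$ large, so there exist $A,B\in\mathbb C$ and $h\in L^2((-1,1);(1+x)^{\beta+2})$ with
\[
f_n'(1)\to A,\qquad f_n(1)+\tfrac{2}{\beta}f_n'(1)\to B,\qquad \int_{-1}^1|f_n''-h|^2(1+x)^{\beta+2}\,dx\to0,
\]
the third limit existing because $L^2((-1,1);(1+x)^{\beta+2})$ is complete. Set $C:=B-\frac{2}{\beta}A$, so that $f_n(1)\to C$, and define the candidate limit $f$ on $(-1,1]$ by
\[
f'(x):=A-\int_x^1 h(t)\,dt,\qquad f(x):=C-\int_x^1 f'(t)\,dt.
\]
The key observation for $f\in S$ is that on every compact $[c,1]\subset(-1,1]$ the weight $(1+x)^{\beta+2}$ is bounded above and below by positive constants, so $h\in L^2[c,1]\subset L^1[c,1]$; hence both integrals converge, $f,f'\in AC_{\mathrm{loc}}(-1,1]$, $f(1)=C$, $f'(1)=A$, and $f''=h$ a.e.\ on $(-1,1)$. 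Together with $h\in L^2((-1,1);(1+x)^{\beta+2})$ this gives $f\in S$. Finally, by (\ref{phi norm}),
\[
\|f_n-f\|_\phi^2=|f_n'(1)-A|^2+\bigl|(f_n(1)-C)+\tfrac{2}{\beta}(f_n'(1)-A)\bigr|^2+\int_{-1}^1|f_n''-h|^2(1+x)^{\beta+2}\,dx,
\]
and each term on the right tends to $0$ by construction, so $f_n\to f$ in $S$; hence $(S,\phi(\cdot,\cdot))$ is complete, i.e.\ a Hilbert space.

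The only nonroutine step — the one I would write most carefully — is verifying that the candidate $f$ really lies in $S$: one must know that the $L^2$-limit $h$ of $\{f_n''\}$ is genuinely locally integrable up to and including $x=1$, so that the two integral formulas recover an honest element of $S$ with $f''=h$ and with the prescribed boundary values $f(1)=C$, $f'(1)=A$. This hinges entirely on the two-sided local boundedness of the weight $(1+x)^{\beta+2}$ on compact subsets of $(-1,1]$; everything else is a direct reading of (\ref{phi norm}) together with completeness of a weighted $L^2$ space.
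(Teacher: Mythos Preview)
Your proof is correct and follows essentially the same approach as the paper: extract the limits of $f_n'(1)$, $f_n(1)$, and $f_n''$ from the three summands of $\|f_n-f_m\|_\phi^2$, then rebuild the candidate $f$ by integrating back from $x=1$; indeed your nested-integral definition of $f$ unwinds to exactly the paper's formula $f(x)=Ax+(C-A)+\int_x^1\int_t^1 h(u)\,du\,dt$. Your explicit verification that $\phi$ is positive-definite and your care in justifying local integrability of $h$ near $x=1$ add detail the paper either omits or handles elsewhere, but the argument is the same.
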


\begin{proof}
Suppose $\left\{  f_{n}\right\}  \subseteq S$ is a Cauchy sequence. Note that
\begin{align*}
\left\Vert f_{n}-f_{m}\right\Vert _{\phi}^{2}  &  =\phi(f_{n}-f_{m}%
,f_{n}-f_{m})\\
&  =\left\vert f_{n}(1)-f_{m}(1)+\frac{2}{\beta}\left(  f_{n}^{\prime
}(1)-f_{m}^{\prime}(1)\right)  \right\vert ^{2}\\
&  \quad+\left\vert f_{n}^{\prime}(1)-f_{m}^{\prime}(1)\right\vert ^{2}%
+\int_{-1}^{1}\left\vert f_{n}^{\prime\prime}(x)-f_{m}^{\prime\prime
}(x)\right\vert ^{2}(1+x)^{\beta+2}\,dx\,.
\end{align*}
From this identity, we see that $\left\{  f_{n}^{\prime\prime}\right\}
_{n=0}^{\infty}$ is Cauchy in $L^{2}\left(  (-1,1);(1+x)^{\beta+2}\right)  $,
and that the sequences $\left\{  f_{n}(1)\right\}  _{n=0}^{\infty}$ and
$\left\{  f_{n}^{\prime}(1)\right\}  _{n=0}^{\infty}$ are both Cauchy in
$\mathbb{C}$. Therefore, from the completeness of the spaces $L^{2}\left(
(-1,1);(1+x)^{\beta+2}\right)  $ and $\mathbb{C}$, there exists a function
$g\in L^{2}((-1,1;\,(1+x)^{\beta+2})$ and scalars $a,b\in\mathbb{C}$ such that
$\left\{  f_{n}^{\prime\prime}\right\}  _{n=0}^{\infty}$ converges to $g$ in
$L^{2}((-1,1;\,(1+x)^{\beta+2})$, $\left\{  f_{n}^{\prime}(1)\right\}
_{n=0}^{\infty}$ converges to $a$ in $\mathbb{C}$, and $\left\{
f_{n}(1)\right\}  _{n=0}^{\infty}$ converges to $b$ in $\mathbb{C}$.

Define the function $f:(-1,1]\rightarrow\mathbb{C}$ by
\[
f(x):=ax+(b-a)+\int_{x}^{1}\int_{t}^{1}g(u)\,du\,dt
\]
Then $f,f^{\prime}\in AC(-1,1]$ with $f(1)=b$ and $f^{\prime}(1)=a.$ Moreover,
$f^{\prime\prime}=g\in L^{2}((-1,1);(1+x)^{\beta+2})$ so $f\in S.$ Moreover,
it is straightforward to see that
\[
\left\Vert f_{n}-f\right\Vert _{\phi}^{2}\rightarrow0\!\quad(n\rightarrow
\infty),
\]
completing the proof of the theorem.
\end{proof}

\begin{theorem}
\label{dense} The set $\mathcal{P}$ of all polynomials is dense in
$(S,\phi(\cdot,\cdot))$. Equivalently, the Jacobi polynomials $\left\{
P_{n}^{(-2,\beta)}\right\}  _{n=0}^{\infty}$ form a complete orthogonal set in
$S.$
\end{theorem}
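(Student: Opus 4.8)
The plan is to exploit the fact that the $\phi$-norm is, by (\ref{phi norm}),
\[
\|f\|_{\phi}^{2}=|f'(1)|^{2}+\Bigl|f(1)+\tfrac{2}{\beta}f'(1)\Bigr|^{2}+\int_{-1}^{1}|f''(x)|^{2}(1+x)^{\beta+2}\,dx\qquad(f\in S),
\]
so that the $\phi$-distance between two elements of $S$ depends only on the difference of their point values and first derivatives at $x=1$ together with the difference of their second derivatives measured in $L^{2}((-1,1);(1+x)^{\beta+2})$. Hence, given $f\in S$ and $\varepsilon>0$, it suffices to produce a polynomial $p$ with $p(1)=f(1)$, $p'(1)=f'(1)$, and $\|f''-p''\|_{L^{2}((-1,1);(1+x)^{\beta+2})}<\varepsilon$.

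First I would invoke the classical fact that polynomials are dense in $L^{2}((-1,1);(1+x)^{\beta+2})$: since $\beta>-1$, the weight $(1+x)^{\beta+2}$ is the classical Jacobi weight $w_{0,\beta+2}$ with both parameters exceeding $-1$, so the Jacobi polynomials $\{P_{n}^{(0,\beta+2)}\}_{n=0}^{\infty}$ form a complete orthogonal system there. Choose a polynomial $q$ with $\|f''-q\|_{L^{2}((-1,1);(1+x)^{\beta+2})}<\varepsilon$, let $Q$ be any polynomial with $Q''=q$, and set
\[
p(x):=Q(x)+\bigl(f(1)-Q(1)\bigr)+\bigl(f'(1)-Q'(1)\bigr)(x-1).
\]
Then $p$ is a polynomial, $p''=q$, $p(1)=f(1)$ and $p'(1)=f'(1)$; moreover $p\in S$, since any polynomial has bounded second derivative and $(1+x)^{\beta+2}\in L^{1}(-1,1)$. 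Feeding $f-p$ into the displayed norm identity, both boundary terms vanish and we are left with $\|f-p\|_{\phi}^{2}=\|f''-q\|_{L^{2}((-1,1);(1+x)^{\beta+2})}^{2}<\varepsilon^{2}$. As $\varepsilon$ was arbitrary, $\mathcal{P}$ is dense in $(S,\phi(\cdot,\cdot))$.

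Finally, for the equivalent reformulation I would recall that $\{P_{n}^{(-2,\beta)}\}_{n=0}^{\infty}$ is $\phi$-orthogonal (the straightforward verification noted after (\ref{Sobolev IP})) and that, because $\beta\neq0$ (Remark \ref{beta not zero}), $\deg P_{n}^{(-2,\beta)}=n$ for every $n$, so this sequence is a vector-space basis of $\mathcal{P}$; hence the closed linear span of $\{P_{n}^{(-2,\beta)}\}_{n=0}^{\infty}$ equals the closure $\overline{\mathcal{P}}$, which by the first part is all of $S$, i.e. the $P_{n}^{(-2,\beta)}$ form a complete orthogonal set. I do not anticipate a genuine obstacle here: the only external ingredient is the classical density of polynomials in the weighted $L^{2}$ space, and everything else is the bookkeeping needed to correct the two endpoint data at $x=1$ by adding an affine polynomial, which alters neither $p''$ nor, after the correction, the boundary terms; the hypothesis $\beta\neq0$ enters only to keep the factor $2/\beta$ and the degree count meaningful.
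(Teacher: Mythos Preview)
Your proof is correct and follows essentially the same approach as the paper: approximate $f''$ in $L^{2}((-1,1);(1+x)^{\beta+2})$ by a polynomial, take a second antiderivative, and add an affine correction to match $f(1)$ and $f'(1)$. The only cosmetic difference is that the paper writes the antiderivative explicitly as a double integral $\int_{x}^{1}\int_{t}^{1}p(u)\,du\,dt$ rather than picking an arbitrary $Q$ with $Q''=q$ and then correcting; your additional remarks on the density of polynomials in the weighted $L^{2}$ space and on the equivalence with completeness of $\{P_{n}^{(-2,\beta)}\}_{n\ge 0}$ are sound and slightly more detailed than what the paper provides.
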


\begin{proof}
Let $f\in S$. Then, $f^{\prime\prime}\in L^{2}((-1,1);(1+x)^{\beta+2})$. Since
$\mathcal{P}$ is dense in $L^{2}((-1,1);\,(1+x)^{\beta+2})$, there exists
$p\in\mathcal{P}$ such that
\begin{equation}
\int_{-1}^{1}\left\vert f^{\prime\prime}(x)-p(x)\right\vert ^{2}%
(1+\beta)^{\beta+2}\,dx<\epsilon^{2}. \label{Density 1}%
\end{equation}
With the polynomial $q$ defined by
\[
q(x):=f^{\prime}(1)x+(f(1)-f^{\prime}(1))+\int_{x}^{1}\int_{t}^{1}%
p(u)\,du\,dt,
\]
we see that $f(1)=q(1)$, $f^{\prime}(1)=q^{\prime}(1)$. Moreover, by
\eqref{Density 1} we see that
\begin{align*}
\left\Vert f-q\right\Vert _{\phi}^{2}  &  =\left\vert f(1)-q(1)+\frac{2}%
{\beta}(f^{\prime}(1)-q^{\prime}(1))\right\vert ^{2}\\
&  \quad\quad+\left\vert f^{\prime}(1)-q^{\prime}(1)\right\vert ^{2}+\int%
_{-1}^{1}\left\vert f^{\prime\prime}(x)-q^{\prime\prime}(x)\right\vert
^{2}(1+x)^{\beta+2}\,dx\\
&  =\int_{-1}^{1}\left\vert f^{\prime\prime}(x)-q^{\prime\prime}(x)\right\vert
^{2}(1+x)^{\beta+2}\,dx<\varepsilon^{2}.
\end{align*}
This completes the proof of the theorem.
\end{proof}

For reasons that will be made clearer shortly, we now define two subspaces of
$S.$
\begin{align*}
S_{1}  &  :=\mathrm{span}\left\{  P_{0}^{(-2,\beta)},P_{1}^{(-2,\beta
)}\right\}  =\left\{  f\in S\mid f^{\prime\prime}(x)=0\right\}  \,,\text{
and}\\
S_{2}  &  :=\mathrm{span}\left\{  P_{n}^{(-2,\beta)}\right\}  _{n=2}^{\infty
}=\left\{  f\in S\mid f(1)=f^{\prime}(1)=0\right\}  \,.
\end{align*}

\begin{theorem}
$S=S_{1}\oplus S_{2}.$

\begin{proof}
Let $f\in S.$ We can write $f(x)$ as
\[
f(x)=g_{1}(x)+g_{2}(x),
\]
where
\[
g_{1}(x)=f^{\prime}(1)x+f(1)-f^{\prime}(1)\text{ and }g_{2}(x)=f(x)-f^{\prime
}(1)x-f(1)+f^{\prime}(1).
\]
It is clear that $g_{i}\in S_{i}$ for $i=1,2$ so $S=S_{1}+S_{2}.$ To show that
$S_{1}\perp S_{2},$ suppose $f_{1}\in S_{1}$ and $f_{2}\in S_{2}$ so
$f_{2}(1)=f_{2}^{\prime}(1)=0$ and $f_{1}^{\prime\prime}(x)=0.$ Then%
\begin{align*}
\phi(f_{1},f_{2})  &  =f_{1}(1)\overline{f_{2}}(1)+\frac{2}{\beta}\left(
f_{1}^{\prime}(1)\overline{f_{2}}(1)+f_{1}(1)\overline{f_{2}}^{\prime
}(1)\right)  +\left(  1+\frac{4}{\beta^{2}}\right)  f_{1}^{\prime}%
(1)\overline{f_{2}}^{\prime}(1)\\
&  \quad\quad+\int_{-1}^{1}f_{1}^{\prime\prime}(x)\overline{f_{2}}%
^{\prime\prime}(x)(1+x)^{\beta+2}dx\\
&  =0.
\end{align*}
This completes the proof of the theorem.
\end{proof}
\end{theorem}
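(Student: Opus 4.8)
The plan is to prove that $S = S_1 \oplus S_2$ by exhibiting an explicit direct-sum decomposition, mimicking the structure already used in the proof of the density theorem. First I would take an arbitrary $f \in S$ and write down the natural candidate splitting
\[
f = g_1 + g_2, \qquad g_1(x) := f'(1)x + f(1) - f'(1), \quad g_2(x) := f(x) - g_1(x).
\]
The point of this choice is that $g_1$ is the unique affine (degree $\le 1$) function matching the values $f(1)$ and $f'(1)$ at $x = 1$; this is precisely why $S_1$ was described both as $\mathrm{span}\{P_0^{(-2,\beta)}, P_1^{(-2,\beta)}\}$ and as $\{f \in S \mid f'' \equiv 0\}$ (here I use Remark \ref{beta not zero}, i.e.\ $\beta \neq 0$, so that $P_1^{(-2,\beta)}$ genuinely has degree one and the span is two-dimensional).

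Next I would verify that $g_1 \in S_1$ and $g_2 \in S_2$. For $g_1$ this is immediate since $g_1'' \equiv 0$ and $g_1$ is clearly in $S$. For $g_2$ one checks $g_2(1) = f(1) - (f'(1) + f(1) - f'(1)) = 0$ and $g_2'(1) = f'(1) - f'(1) = 0$, and that $g_2'' = f''$ lies in $L^2((-1,1);(1+x)^{\beta+2})$, so $g_2 \in S$ and hence $g_2 \in S_2$. This gives $S = S_1 + S_2$.

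It remains to show the sum is orthogonal with respect to $\phi(\cdot,\cdot)$. I would take $f_1 \in S_1$ and $f_2 \in S_2$, so that $f_1'' \equiv 0$ (hence the integral term in $\phi(f_1,f_2)$ vanishes) and $f_2(1) = f_2'(1) = 0$ (hence every boundary term in \eqref{Sobolev IP} that carries a factor of $f_2(1)$ or $f_2'(1)$ vanishes). Inspecting the explicit form of $\phi$ in \eqref{Sobolev IP}, every single summand contains either $\overline{f_2}(1)$, $\overline{f_2}'(1)$, or the integral of $f_1'' \overline{f_2}''$, so $\phi(f_1, f_2) = 0$. Combined with $S = S_1 + S_2$, this yields $S = S_1 \oplus S_2$.

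I do not expect any genuine obstacle here: the decomposition is forced by the definitions of $S_1$ and $S_2$, and the orthogonality is a term-by-term inspection of $\phi$. The only point requiring the tiniest bit of care is the standing hypothesis $\beta \neq 0$ needed to know $\dim S_1 = 2$ (so that $S_1$ is really the span of the two lowest Jacobi polynomials rather than just the constants), but for the identity $S = S_1 \oplus S_2$ as \emph{sets} one does not even strictly need this, since $S_1 = \{f \in S \mid f'' \equiv 0\}$ is a clean description independent of that remark.
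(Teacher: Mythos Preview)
Your proposal is correct and follows essentially the same approach as the paper: the paper uses exactly the same decomposition $g_1(x)=f'(1)x+f(1)-f'(1)$, $g_2=f-g_1$, and verifies orthogonality by the same term-by-term inspection of $\phi(f_1,f_2)$. Your write-up is slightly more detailed in checking $g_i\in S_i$ and in flagging the role of $\beta\neq 0$, but there is no substantive difference.
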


We remark that, since $S_{1}$ and $S_{2}$ are closed subspaces of $S,$ both
$(S_{1},\phi(\cdot,\cdot))$ and $(S_{2},\phi(\cdot,\cdot))$ are Hilbert spaces.

In order to construct the self-adjoint operator $T$ in $S$, we will construct
two self-adjoint operators $T_{1}$ and $T_{2},$ both generated by
$m_{-2,\beta}[\cdot]$, in $S_{1}$ and $S_{2}$ respectively. The operator
$T=T_{1}\oplus T_{2},$ the direct sum of $T_{1}$ and $T_{2},$ will be the
self-adjoint operator in $S$ that has the properties we desire.

\section{The Construction of the Operators $T_{1}$, $T_{2}$ and $T$%
\label{Section Ten}}

Define $T_{1}:\mathcal{D}(T_{1})\subset S_{1}\rightarrow S_{1}$ by
\begin{align}
T_{1}f  &  =m_{-2,\beta}[f]\nonumber\\
f\in\mathcal{D}(T_{1}):  &  =S_{1}. \label{Operator T_1}%
\end{align}

It is straight forward to show that $T_{1}$ is symmetric with respect to the
inner product $\phi(\cdot,\cdot)$ and, since $S_{1}$ is two-dimensional, it
follows that $T_{1}$ is self-adjoint in $S_{1}.$ Moreover, it is clear that
\[
\sigma(T_{1})=\{n^{2}+(\beta-1)n+1\mid n=0,1\}.
\]
We now focus our attention on the construction of $T_{2}.$ It is remarkable
that the left-definite theory associated with $T_{-2,\beta}$ plays a very
significant role in this construction.

\begin{theorem}
$S_{2}=\mathcal{D}(T_{-2,\beta})=V_{2}.$
\end{theorem}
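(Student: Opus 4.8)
The plan is to establish the two equalities $S_2 = \mathcal{D}(T_{-2,\beta})$ and $\mathcal{D}(T_{-2,\beta}) = V_2$ separately, though the second is essentially already recorded in Theorem~\ref{LD Characterization of Domain} together with the $r=2$ case of \eqref{V_r}. So the substance of the argument is the identification $S_2 = V_2$, where
\[
S_2 = \left\{ f \in S \mid f(1) = f'(1) = 0 \right\}, \qquad S = \left\{ f : (-1,1] \to \mathbb{C} \mid f, f' \in AC_{\mathrm{loc}}(-1,1];\ f'' \in L^2((-1,1);(1+x)^{\beta+2}) \right\},
\]
and
\[
V_2 = \left\{ f : (-1,1) \to \mathbb{C} \mid f, f' \in AC_{\mathrm{loc}}(-1,1);\ f^{(j)} \in L^2((-1,1);(1-x)^{j-2}(1+x)^{\beta+j}),\ j=0,1,2 \right\}.
\]

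For the inclusion $V_2 \subseteq S_2$: take $f \in V_2 = \mathcal{D}(T_{-2,\beta})$. The weight for $j=2$ is $(1-x)^0 (1+x)^{\beta+2} = (1+x)^{\beta+2}$, so $f'' \in L^2((-1,1);(1+x)^{\beta+2})$ immediately, and $f, f' \in AC_{\mathrm{loc}}(-1,1)$ by definition. The discussion following Theorem~\ref{LD Characterization of Domain} already shows that $f'' \in L^2[0,1]$, hence $f' \in AC_{\mathrm{loc}}(-1,1]$ (this is \eqref{AC condition on f'}), so $f' \in AC(-1,1]$ and in particular $f \in AC_{\mathrm{loc}}(-1,1]$; thus $f \in S$. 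Next, $f \in V_2 = \mathcal{D}(T_{-2,\beta})$ and Theorem~\ref{Properties at +1}(b)--(c) give that $\lim_{x\to 1^-} f(x)$ exists and equals $f(1) = 0$; moreover \eqref{Another Property of f in Domain} gives $f'(1) = 0$. Hence $f \in S_2$.

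For the reverse inclusion $S_2 \subseteq V_2$: take $f \in S_2$. We must verify the three $L^2$-conditions. The $j=2$ condition is immediate from membership in $S$. For $j=0,1$ the point is to use the vanishing $f(1) = f'(1) = 0$ together with $f'' \in L^2((-1,1);(1+x)^{\beta+2})$ to control the behavior near $x=1$, and to handle the behavior near $x=-1$ separately. Near $x=1$: since $f'(1)=0$ and $f'' \in L^2[0,1]$, we have $f'(x) = -\int_x^1 f''(t)\,dt$, so $|f'(x)| \le (1-x)^{1/2}\|f''\|_{L^2[x,1]}$ by Cauchy--Schwarz, which makes $(1-x)^{-1}|f'(x)|^2 \in L^1[0,1]$, i.e. $f' \in L^2((0,1);(1-x)^{-1}(1+x)^{\beta+1})$; similarly, $f(1)=0$ gives $f(x) = -\int_x^1 f'(t)\,dt$ and an iterated estimate $|f(x)| \le C(1-x)$, so $(1-x)^{-2}|f(x)|^2$ is bounded near $1$, giving the $j=0$ condition near $1$. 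Near $x=-1$: on $(-1,0)$ the factor $(1-x)^{j-2}$ is bounded and bounded away from $0$, so one only needs $f^{(j)} \in L^2((-1,0);(1+x)^{\beta+j})$ for $j=0,1$. Here I would integrate $f'' \in L^2((-1,0);(1+x)^{\beta+2})$ twice against the weights, using Theorem~\ref{CHEL Inequality} (the Chisholm--Everitt--Littlejohn inequality) with the $(1+x)$-weights on $(-1,0)$ in the same style as the proof of Theorem~\ref{Properties at +1}(a),(d) — choosing the representation $f'(x) = f'(0) - \int_x^0 f''(t)\,dt$ and checking that the relevant $K$-function $\{\int_{-1}^x |\varphi|^2 \omega\}^{1/2}\{\int_x^0 |\psi|^2 \omega\}^{1/2}$ is bounded. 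Combining the near-$1$ and near-$-1$ estimates (and the trivial bounds on the compact middle) yields $f^{(j)} \in L^2((-1,1);(1-x)^{j-2}(1+x)^{\beta+j})$ for $j=0,1,2$, so $f \in V_2$.

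The main obstacle I anticipate is the behavior near the endpoint $x=-1$ in proving $S_2 \subseteq V_2$: the hypothesis $f \in S$ only directly gives $f'' \in L^2((-1,1);(1+x)^{\beta+2})$, and for $-1 < \beta < 1$ the weight $(1+x)^\beta$ is singular at $-1$, so extracting $f \in L^2((-1,1);(1+x)^\beta)$ and $f' \in L^2((-1,1);(1+x)^{\beta+1})$ from it requires the two-weight Hardy-type inequality of Theorem~\ref{CHEL Inequality} rather than a naive estimate, with a careful choice of $\varphi, \psi$ and verification that the constant $K$ is finite. Everything near $x=1$, by contrast, is elementary Cauchy--Schwarz once the boundary values $f(1) = f'(1) = 0$ are in hand. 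Finally, I would note explicitly that $\mathcal{D}(T_{-2,\beta}) = V_2$ is exactly Theorem~\ref{LD Characterization of Domain} compared with \eqref{V_r} at $r=2$, so no separate argument for that equality is needed beyond citing it.
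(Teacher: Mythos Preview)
Your proposal is correct and follows essentially the same route as the paper: both directions of $S_2 = V_2$ are handled as you describe, with $V_2 \subset S_2$ drawn from Theorem~\ref{Properties at +1}(b)--(c), \eqref{AC condition on f'}, and \eqref{Another Property of f in Domain}, and $S_2 \subset V_2$ obtained via the representation $f'(x) = -\int_x^1 f''(t)\,dt$ together with the Chisholm--Everitt--Littlejohn inequality (Theorem~\ref{CHEL Inequality}). The only minor difference is that the paper applies Theorem~\ref{CHEL Inequality} once on the whole interval $(-1,1)$ for each of $j=0,1$, whereas you split off a neighborhood of $x=1$ and handle it by elementary Cauchy--Schwarz; this is a small simplification, not a different argument.
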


\begin{proof}
We already know from Theorem \ref{Left-definite Theorem} that $\mathcal{D}%
(T_{-2,\beta})=V_{2}.\medskip$\newline\underline{$S_{2}\subset V_{2}$}: Let
$f\in S_{2}.$ In particular, we see that $f^{\prime\prime}\in L^{2}%
((-1,1);(1+x)^{\beta+2})$ which, since $(1+x)^{\beta+2}$ is bounded on $[0,1]$
implies that $f^{\prime\prime}\in L^{2}[0,1].$ Since $f^{\prime}(1)=0,$ we see
that%
\[
f^{\prime}(x)=-\int_{x}^{1}f^{\prime\prime}(t)dt
\]
and%
\[
(1-x)^{-1/2}(1+x)^{(\beta+1)/2}f^{\prime}(x)=-(1-x)^{-1/2}(1+x)^{(\beta
+1)/2}\int_{x}^{1}f^{\prime\prime}(t)dt.
\]
We now use Theorem \ref{CHEL Inequality} to show that $f^{\prime}\in
L^{2}((-1,1);(1-x)^{-1}(1+x)^{\beta+1}).$ Let $\varphi(x)=1,\psi
(x)=-(1+x)^{-1/2}(1+x)^{(\beta+1)/2};$ both of these functions satisfy the
conditions of Theorem \ref{CHEL Inequality}. Since%
\[
\int_{x}^{1}1dt\cdot\int_{-1}^{x}(1+x)^{-1}(1+t)^{\beta+1}dt
\]
is bounded on $(-1,1)$, we can conclude from Theorem \ref{CHEL Inequality}
that $f^{\prime}\in L^{2}((-1,1);(1-x)^{-1}(1+x)^{\beta+1}).$ A similar
application of Theorem \ref{CHEL Inequality} shows that
\[
f\in L^{2}((-1,1);(1-x)^{-2}(1+x)^{\beta}).
\]
This proves that $S_{2}\subset V_{2}$.

\noindent\underline{$V_{2}\subset S_{2}$}: Let $f\in V_{2}=\mathcal{D}%
(T_{-2,\beta}).$ From part $($b$)$ of Theorem \ref{Properties at +1} and
$($\ref{AC condition on f'}$),$ we see that $f,f^{\prime}\in AC_{\mathrm{loc}%
}(-1,1].$ From Theorem \ref{LD Characterization of Domain}, we find that
$f^{\prime\prime}\in L^{2}((-1,1);(1+x)^{\beta+2}).$ Finally, part $($c$)$ of
Theorem \ref{Properties at +1} and $($\ref{Another Property of f in Domain}%
$),$ we see that $f(1)=f^{\prime}(1)=0.$ This shows $V_{2}\subset S_{2}$ and
completes the proof of the theorem.
\end{proof}

Now that we have established that $S_{2}=V_{2}$, what can we say about the
inner products $\phi(\cdot,\cdot)$ and $(\cdot,\cdot)_{2},$ where
$(\cdot,\cdot)_{2}$ is the second left-definite inner product defined in
\eqref{V_2 IP}? Remarkably, the answer is given in the following theorem.

\begin{theorem}
The inner products $\phi(\cdot,\cdot)$ and $(\cdot,\cdot)_{2}$ are equivalent
on the Hilbert spaces $(S_{2},\phi(\cdot,\cdot))$ and $(V_{2}=S_{2}%
,(\cdot,\cdot)_{2})$.
\end{theorem}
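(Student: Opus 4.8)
The plan is to establish two bounds: that there is a constant $C_1>0$ with $\phi(f,f)\le C_1(f,f)_2$ for all $f\in S_2$, and a constant $C_2>0$ with $(f,f)_2\le C_2\,\phi(f,f)$ for all $f\in S_2=V_2$. Together with the already-established fact that both $(S_2,\phi)$ and $(V_2,(\cdot,\cdot)_2)$ are Hilbert spaces on the \emph{same} underlying vector space, this is exactly the statement that the two norms are equivalent. Throughout I will use freely the structural facts proved above for $f\in V_2=S_2$, namely $f,f'\in AC_{\mathrm{loc}}(-1,1]$, $f(1)=f'(1)=0$ (parts (b),(c) of Theorem~\ref{Properties at +1} together with \eqref{AC condition on f'} and \eqref{Another Property of f in Domain}), and $f^{(j)}\in L^2((-1,1);(1-x)^{j-2}(1+x)^{\beta+j})$ for $j=0,1,2$.

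First I would dispatch the easy direction $\phi(f,f)\le C_1(f,f)_2$. Since $f(1)=f'(1)=0$ for $f\in S_2$, the boundary terms in \eqref{phi norm} vanish, so $\phi(f,f)=\int_{-1}^1|f''(x)|^2(1+x)^{\beta+2}\,dx$. On the other hand, $(f,f)_2$ in \eqref{V_2 IP} is a sum of three manifestly nonnegative integrals, the first of which is precisely $\int_{-1}^1|f''(x)|^2(1+x)^{\beta+2}\,dx$ (here one uses $\beta+2>0$ and the positivity of the Jacobi--Stirling coefficient $c_2^{(-2,\beta)}=1$ in the $r=2$ case, as displayed). Hence $\phi(f,f)=\int_{-1}^1|f''|^2(1+x)^{\beta+2}\,dx\le (f,f)_2$, so in fact $C_1=1$ works.

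The substantive direction is $(f,f)_2\le C_2\,\phi(f,f)$, i.e.\ controlling the two ``lower-order'' integrals $\int_{-1}^1(1-x)^{-1}(1+x)^{\beta+1}|f'|^2\,dx$ and $\int_{-1}^1(1-x)^{-2}(1+x)^{\beta}|f|^2\,dx$ by $\int_{-1}^1(1+x)^{\beta+2}|f''|^2\,dx$. This is a Hardy-type inequality, and the natural tool is precisely Theorem~\ref{CHEL Inequality}, used in the same spirit as in the proof of $S_2=V_2$: write $f'(x)=-\int_x^1 f''(t)\,dt$ (legitimate since $f'(1)=0$ and $f''\in L^1$ near $1$) and $f(x)=-\int_x^1 f'(t)\,dt$ (since $f(1)=0$), then represent $(1-x)^{-1/2}(1+x)^{(\beta+1)/2}f'(x)$ and $(1-x)^{-1}(1+x)^{\beta/2}f(x)$ as operators of the form $Bg$ with $g(x)=(1+x)^{(\beta+2)/2}f''(x)\in L^2(-1,1)$, choosing $\varphi,\psi$ so that the kernel bound $K=\sup_x K(x)<\infty$; the boundedness of the relevant products $\int_a^x|\varphi|^2\cdot\int_x^b|\psi|^2$ on $(-1,1)$ is exactly the kind of elementary estimate already carried out in the $S_2\subset V_2$ argument, now tracking the extra power $(1+x)^{(\beta+2)/2}$ in $g$. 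One must apply the inequality once near $x=1$ (where $(1-x)^{-1}$, $(1-x)^{-2}$ are singular) and once near $x=-1$ (where $(1+x)^{\beta}$ may be singular if $-1<\beta<0$), splitting the interval at $0$; on each half the CHEL hypotheses are straightforward to verify. Chaining the two estimates ($f''\rightsquigarrow f'\rightsquigarrow f$ in the appropriate weighted $L^2$ spaces) yields both lower-order integrals bounded by a constant times $\int_{-1}^1(1+x)^{\beta+2}|f''|^2\,dx=\phi(f,f)$.

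The main obstacle will be the bookkeeping of weights in the CHEL application near $x=-1$ when $\beta$ is negative, and making sure the second integral (the one with $(1-x)^{-2}(1+x)^\beta|f|^2$) is controlled by iterating the Hardy estimate correctly rather than by a single application; everything else is routine. Assembling the pieces: from the two inequalities $\phi(f,f)\le(f,f)_2$ and $(f,f)_2\le C_2\,\phi(f,f)$, valid for every $f$ in the common space $S_2=V_2$, the identity map $(S_2,\phi(\cdot,\cdot))\to(V_2,(\cdot,\cdot)_2)$ is a bounded bijection with bounded inverse, which is precisely the assertion that the two inner products are equivalent. $\qed$
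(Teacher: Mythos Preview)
Your proof of the easy direction $\phi(f,f)\le(f,f)_2$ is identical to the paper's. For the reverse inequality, however, the paper takes a much shorter route: rather than proving $(f,f)_2\le C_2\,\phi(f,f)$ directly via Hardy-type estimates, it simply observes that both $(S_2,\phi)$ and $(V_2,(\cdot,\cdot)_2)$ are \emph{complete} (the first as a closed subspace of the Hilbert space $S$, the second by left-definite theory), so the identity map $(V_2,\|\cdot\|_2)\to(S_2,\|\cdot\|_\phi)$ is a bounded linear bijection between Banach spaces, and the Open Mapping Theorem yields the boundedness of its inverse automatically.

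Your approach is nonetheless correct. The CHEL applications you outline (splitting at $0$, writing $f'(x)=-\int_x^1 f''$ and $f(x)=-\int_x^1 f'$, and checking that the relevant $K(x)$ products stay bounded) do go through; near $x=-1$ one finds $K(x)^2\sim(1+x)^{\beta+2}\cdot(1+x)^{-(\beta+1)}=(1+x)\to 0$, and the constant term $f'(0)=-\int_0^1 f''$ is controlled by Cauchy--Schwarz. What your route buys is an explicit (in principle computable) constant $C_2$, whereas the Open Mapping argument is non-constructive. What the paper's route buys is brevity: once one has invested the effort to prove that $S_2=V_2$ as sets and that each norm makes the space complete, the equivalence of the two norms comes essentially for free.
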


\begin{proof}
Let $f\in S_{2}=V_{2}.$ Then%
\begin{align*}
\left\Vert f\right\Vert _{2}^{2}=(f,f)_{2}  &  =\int_{-1}^{1}\left(
(1+t)^{\beta+2}\left\vert f^{\prime\prime}(t)\right\vert ^{2}+(\beta
+2)(1-t)^{-1}(1+t)^{\beta+1}\left\vert f^{\prime}(t)\right\vert ^{2}\right. \\
&  \quad\quad\left.  +(1-t)^{-2}(1+t)^{\beta}\left\vert f(t)\right\vert
^{2}\right)  dt\\
&  \geq\int_{-1}^{1}(1+t)^{\beta+2}\left\vert f(t)\right\vert ^{2}%
dt=\left\Vert f\right\Vert _{\phi}^{2}\,,
\end{align*}
where the latter identity follows from $($\ref{phi norm}$)$ and by definition
of $S_{2}.$ The Open Mapping Theorem (see \cite[Chapter 4.12, Problem
9]{Kreyszig}) now implies that the two inner products are equivalent.
\end{proof}

We now are in position to construct the self-adjoint operator $T_{2}$ in
$S_{2}$ which has the Jacobi polynomials $\left\{  P_{n}^{(-2,\beta)}\right\}
_{n=2}^{\infty}$ as eigenfunctions. Define $T_{2}:\mathcal{D}(T_{2})\subset
S_{2}\rightarrow S_{2}$ by%

\begin{align}
T_{2}f  &  =m_{-2,\beta}[f]\nonumber\\
f\in\mathcal{D}(T_{2})  &  =V_{4} \label{T_2 definition}%
\end{align}
where $V_{4}$ is defined in (\ref{V_4}). Notice that, other than the inner
product being different (but equivalent), this operator is essentially the
second left-definite operator associated with the pair $(T_{-2,\beta}%
,L^{2}((-1,1);w_{-2,\beta}))$. From the left-definite theory
\cite{Littlejohn-Wellman}, $T_{2}$ is self-adjoint in the second left-definite
space $H_{2}=(V_{2},(\cdot,\cdot)_{2})$. However, it requires work to show it
is self-adjoint in $S_{2}.$

\begin{theorem}
\label{selfadjoint} Let $T_{2}$ be the operator defined in \eqref{T_2 definition}.

\begin{itemize}
\item[(a)] $T_{2}$ is densely defined and closed in $\left(  S_{2},\phi
(\cdot,\cdot)\right)  ;$

\item[(b)] $T_{2}$ is symmetric in $\left(  S_{2},\phi(\cdot,\cdot)\right)  ;$
that is,
\[
\phi(T_{2}f,g)=\phi(f,T_{2}g)\quad(f,g\in\mathcal{D}(T_{2}));
\]

\item[(c)] $T_{2}$ is self-adjoint in $(S_{2},\phi(\cdot,\cdot))$ and has the
Jacobi polynomials$\left\{  P_{n}^{(-2,\beta)}\right\}  _{n=2}^{\infty}$ as
eigenfunctions. The spectrum of $T_{2}$ is $\sigma(T_{2})=\{n^{2}%
+(\beta-1)n+1\mid n\geq2\}.$
\end{itemize}

\begin{proof}
Somewhat surprisingly, the difficult part of the proof is in establishing the
symmetry, not the self-adjointness, of $T_{2}$. The domain $\mathcal{D}%
(T_{2})$ being dense in $S_{2}$ follows by direct analysis or from part
$($b$)$ of Theorem \ref{Left-definite Theorem} since $\left\{  P_{n}%
^{(-2,\beta)}\right\}  _{n=2}^{\infty}\subset$ $\mathcal{D}(T_{2}).$ We now
show that $T_{2}$ is closed in $S_{2}.$ Suppose $\{f_{n}\}\subset
\mathcal{D}(T_{2})$ with%
\begin{align*}
f_{n}  &  \rightarrow f\text{ in }(S_{2},\phi(\cdot,\cdot))\text{ and}\\
T_{2}f  &  \rightarrow g\text{ in }(S_{2},\phi(\cdot,\cdot)).\text{ }%
\end{align*}
We need to show that $f\in\mathcal{D}(T_{2})$ and $T_{2}f=g.$ Since equivalent
inner products have the same convergent sequences, it is clear that
$f_{n}\rightarrow f$ and $T_{2}f_{n}\rightarrow g$ in $H_{2}=(V_{2}%
,(\cdot,\cdot)).$ Moreover, since self-adjoint operators are closed and since
$T_{2}$ is self-adjoint in $H_{2},$ we see that $T_{2}$ is closed. Hence $f\in
V_{4}=\mathcal{D}(T_{2})$ and $T_{2}f=g.$ This proves part (a). Since a
closed, symmetric operator having a complete set of eigenfunctions in a
Hilbert space is self-adjoint $($see \cite[Theorem 3, page 173 and Theorem 6,
page 184]{Hellwig}$)$, we see that the self-adjointness of $T_{2}$ will follow
as soon as we establish the symmetry of $T_{2}$. To that end, for
$f,g\in\mathcal{D}(T_{2}),$ a laborious calculation shows that%
\begin{equation}
\phi(T_{2}f,g)-\phi(f,T_{2}g)=[f,g]_{\phi}(1)-[f,g]_{\phi}(-1),
\label{Green's formula for phi}%
\end{equation}
where $[\cdot,\cdot]_{\phi}$ is the sesquilinear form given by
\[
\lbrack f,g]_{\phi}(x)=(1-x)(1+x)^{\beta+3}(f^{\prime\prime\prime}%
(x)\overline{g}^{\prime\prime}(x)-f^{\prime\prime}(x)\overline{g}%
^{\prime\prime\prime}(x))\quad(f,g\in\mathcal{D}(T_{2}))
\]
and
\[
\lbrack f,g]_{\phi}(\pm1)=\lim_{x\rightarrow\pm1^{\mp}}[f,g]_{\phi}(x).
\]
These limits both exist and are finite by definition of $\mathcal{D}(T_{2})$.
We will show that, in fact,%
\[
\lbrack f,g]_{\phi}(\pm1)=0\quad(f,g\in\mathcal{D}(T_{2})).
\]
We show the details at $x=+1.\medskip$\newline\underline{Claim \#1}:
$\lim_{x\rightarrow1^{-}}(1-x)f^{\prime\prime\prime}(x)=0$ for $f\in
\mathcal{D}(T_{2}).$\newline Without loss of generality, suppose $f$ is
real-valued. Let
\[
\widehat{g}(x)=\left\{
\begin{array}
[c]{ll}%
0 & \text{if }-1\leq x\leq0\medskip\\
-704x^{7}+1208x^{6}-707x^{5}+140x^{4} & \text{if }0<x\leq\frac{1}{2}\medskip\\
\frac{1}{2}(x-1)^{2} & \text{if }\frac{1}{2}<x\leq1.
\end{array}
\right.
\]
It is clear that $\widehat{g}\in V_{4}.$ A calculation shows that
\[
\lim_{x\rightarrow1^{-}}[f,\widehat{g}]_{\phi}(x)=\lim_{x\rightarrow1^{-}%
}(1-x)(1+x)^{\beta+3}f^{\prime\prime\prime}(x).
\]
Consequently, we see that
\[
\lim_{x\rightarrow1^{-}}(1-x)f^{\prime\prime\prime}(x)\quad(f\in
\mathcal{D}(T_{2})).
\]
exists and is finite. If this limit, say $c,$ is not zero, we can suppose that
$c>0.$ Hence there exists $x^{\ast}\in(0,1)$ such that%
\[
f^{\prime\prime\prime}(x)\geq\frac{c}{2}\cdot\frac{1}{1-x}\quad(x\in\lbrack
x^{\ast},1))
\]
and consequently%
\[
(1-x)^{1/2}(1+x)^{(\beta+3)/2}f^{\prime\prime\prime}(x)\geq\frac{c}{2}%
\cdot\frac{(1+x)^{(\beta+3)/2}}{(1-x)^{1/2}}\quad(x\in\lbrack x^{\ast},1)).
\]
Since $f^{\prime\prime\prime}\in L^{2}((-1,1);(1-x)(1+x)^{\beta+3}),$ we see
that%
\[
\infty>\int_{x^{\ast}}^{1}\left\vert f^{\prime\prime\prime}(x)\right\vert
^{2}(1-x)(1+x)^{\beta+3}dx\geq\frac{c^{2}}{4}\int_{x^{\ast}}^{1}%
\frac{(1+x)^{\beta+3}}{1-x}dx=\infty.
\]
This contradiction proves the claim.\medskip\newline\underline{Claim\ \#2}:
$f^{\prime\prime\prime}\in L^{2}(0,1);$ consequently, $f,f^{\prime}%
,f^{\prime\prime}\in AC[0,1]$ and, in particular, $f^{\prime\prime}(1)$ exists
and is finite. To see this, note that%
\[
f^{\prime\prime\prime}(x)=f^{\prime\prime\prime}(0)+\int_{0}^{x}\frac
{f^{(4)}(t)(1-t)(1+t)^{(\beta+4)/4}}{(1-t)(1+t)^{(\beta+4)/4}}dt\quad(0\leq
x<1).
\]
By definition of $V_{4},$ $(1-t)(1+t)^{(\beta+4)/2}f^{(4)}\in L^{2}(-1,1).$ We
apply Theorem \ref{CHEL Inequality} using
\[
\varphi(x)=\frac{1}{(1-x)(1+x)^{(\beta+4)/4}}\text{ and }\psi(x)=1\quad(0\leq
x<1).
\]
A calculation shows that%
\[
\int_{0}^{x}\varphi^{2}(t)dt\cdot\int_{x}^{1}\psi^{2}(t)dt
\]
is bounded on $[0,1].$ Thus, it follows that $f^{\prime\prime\prime}\in
L^{2}(0,1).\medskip$\newline\underline{Claim \#3:} For all $f,g\in
\mathcal{D}(T_{2}),$%
\[
\lim_{x\rightarrow1^{-}}[f,g]_{\phi}(x)=0.
\]
It suffices to show that
\[
\lim_{x\rightarrow1^{-}}(1-x)(1+x)^{\beta+3}f^{\prime\prime\prime}%
(x)\overline{g}^{\prime\prime}(x)=0.
\]
We apply Claims 1 and 2 and see that%
\begin{align*}
&  \lim_{x\rightarrow1^{-}}(1-x)(1+x)^{\beta+3}f^{\prime\prime\prime
}(x)\overline{g}^{\prime\prime}(x)\\
=  &  \lim_{x\rightarrow1^{-}}(1+x)^{\beta+3}\cdot\lim_{x\rightarrow1^{-}%
}(1-x)f^{\prime\prime\prime}(x)\cdot\lim_{x\rightarrow1^{-}}\overline
{g}^{\prime\prime}(x)\\
=  &  0.
\end{align*}

A similar analysis shows
\[
\lim_{x\rightarrow-1^{+}}[f,g]_{\phi}(x)=0\quad(f,g\in\mathcal{D}(T_{2})).
\]
Referring to $($\ref{Green's formula for phi}$),$ we see that $T_{2}$ is
symmetric in $S_{2}$ and this completes the proof of the theorem.
\end{proof}
\end{theorem}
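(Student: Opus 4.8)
The plan is to prove the three items in the order (a), (b), (c), reserving almost all of the effort for the symmetry in (b); once symmetry is in place, both closedness and self-adjointness follow by soft arguments. For density of $\mathcal{D}(T_{2})=V_{4}$ in $(S_{2},\phi(\cdot,\cdot))$, I would simply note that the eigenfunctions $\left\{P_{n}^{(-2,\beta)}\right\}_{n=2}^{\infty}$ lie in $V_{4}$ and, by Theorem~\ref{dense}, are complete in $S_{2}$; alternatively this is part (b) of Theorem~\ref{Left-definite Theorem}. For closedness, I would exploit the identification $S_{2}=V_{2}=\mathcal{D}(T_{-2,\beta})$ together with the equivalence of the norms $\left\Vert\cdot\right\Vert_{\phi}$ and $\left\Vert\cdot\right\Vert_{2}$ on this common space. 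Since equivalent norms share the same convergent sequences, a sequence $\{f_{n}\}\subset V_{4}$ with $f_{n}\to f$ and $T_{2}f_{n}\to g$ in $(S_{2},\phi)$ also converges in $H_{2}=(V_{2},(\cdot,\cdot)_{2})$; as $T_{2}$ is (essentially) the second left-definite operator, which is self-adjoint and hence closed in $H_{2}$ by Theorem~\ref{Left-definite Theorem}, we conclude $f\in V_{4}$ and $T_{2}f=g$.

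For (b), the first step is to derive, by repeated integration by parts starting from the explicit formula \eqref{V_2 IP} for $(\cdot,\cdot)_{2}$ (which agrees with $\phi(\cdot,\cdot)$ on $S_{2}$ up to equivalence), a Green-type identity
\[
\phi(T_{2}f,g)-\phi(f,T_{2}g)=[f,g]_{\phi}(1)-[f,g]_{\phi}(-1),
\]
where $[f,g]_{\phi}(x)=(1-x)(1+x)^{\beta+3}\bigl(f^{\prime\prime\prime}(x)\overline{g}^{\prime\prime}(x)-f^{\prime\prime}(x)\overline{g}^{\prime\prime\prime}(x)\bigr)$; this is a lengthy but mechanical computation, and the definition of $V_{4}$ guarantees the one-sided limits exist and are finite. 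The real content is to show the boundary terms vanish for all $f,g\in V_{4}$. I would treat $x=1$ in detail, the endpoint $x=-1$ being analogous with the roles of $(1-x)$ and $(1+x)$ interchanged. The strategy is to gain enough control on $f^{\prime\prime\prime}$ near $1$ so that the relevant factor of $[f,g]_{\phi}$ collapses: namely (i) $\lim_{x\to1^{-}}(1-x)f^{\prime\prime\prime}(x)=0$, and (ii) $f^{\prime\prime\prime}\in L^{2}(0,1)$, so that $f^{\prime\prime}$ extends absolutely continuously to $[0,1]$ with finite $f^{\prime\prime}(1)$; then $\lim_{x\to1^{-}}(1-x)(1+x)^{\beta+3}f^{\prime\prime\prime}(x)\overline{g}^{\prime\prime}(x)$ factors into $\lim(1+x)^{\beta+3}$, $\lim(1-x)f^{\prime\prime\prime}(x)$ and $\lim\overline{g}^{\prime\prime}(x)$, each finite and the middle one zero.

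For step (i) I would feed a tailored test function $\widehat{g}\in V_{4}$ into the boundary form — one that behaves like $\tfrac12(x-1)^{2}$ near $x=1$ and is identically $0$ near $x=-1$ — to see that $\lim_{x\to1^{-}}(1-x)(1+x)^{\beta+3}f^{\prime\prime\prime}(x)$ exists and is finite; a contradiction argument then forces this limit to be $0$, since if it equaled some $c>0$ we would have $(1-x)^{1/2}(1+x)^{(\beta+3)/2}f^{\prime\prime\prime}(x)\gtrsim(1-x)^{-1/2}$ near $1$, whose square is not integrable, contradicting $f^{\prime\prime\prime}\in L^{2}((-1,1);(1-x)(1+x)^{\beta+3})$, which holds by definition of $V_{4}$. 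For step (ii) I would write $f^{\prime\prime\prime}(x)=f^{\prime\prime\prime}(0)+\int_{0}^{x}f^{(4)}(t)\,dt$, insert the weight $(1-t)(1+t)^{(\beta+4)/2}$ in numerator and denominator, and apply the Chisholm--Everitt--Littlejohn inequality (Theorem~\ref{CHEL Inequality}) on $(0,1)$ with $\varphi(x)=1/\bigl((1-x)(1+x)^{(\beta+4)/4}\bigr)$, $\psi\equiv1$, $\omega\equiv1$, checking that $\int_{0}^{x}\varphi^{2}\cdot\int_{x}^{1}\psi^{2}$ is bounded on $[0,1]$, which yields $f^{\prime\prime\prime}\in L^{2}(0,1)$.

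Finally, for (c), having (a) and (b) in hand I would invoke the standard fact that a closed symmetric operator possessing a complete orthogonal set of eigenfunctions is automatically self-adjoint. The polynomials $\left\{P_{n}^{(-2,\beta)}\right\}_{n=2}^{\infty}$ lie in $V_{4}=\mathcal{D}(T_{2})$, satisfy $m_{-2,\beta}[P_{n}^{(-2,\beta)}]=\lambda_{n}P_{n}^{(-2,\beta)}$ with $\lambda_{n}=n^{2}+(\beta-1)n+1$ by \eqref{Eigenvalues-special case}, and are complete in $S_{2}$ by Theorem~\ref{dense}; hence $\sigma(T_{2})=\{n^{2}+(\beta-1)n+1\mid n\geq2\}$. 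I expect the main obstacle to be step (i) — not the mere existence of $\lim_{x\to1^{-}}(1-x)f^{\prime\prime\prime}(x)$, which comes cheaply from the test function, but its vanishing via the delicate $L^{2}$-contradiction argument — together with getting the weights right in the two separate CHEL applications at $x=1$ and at $x=-1$; the initial integration-by-parts derivation of the Green identity, while long, should present no conceptual difficulty.
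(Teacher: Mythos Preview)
Your proposal is correct and follows essentially the same approach as the paper: density via the polynomial eigenfunctions (or Theorem~\ref{Left-definite Theorem}), closedness by transferring from $H_{2}$ via the norm equivalence, the identical Green formula with boundary form $[f,g]_{\phi}(x)=(1-x)(1+x)^{\beta+3}(f'''\overline{g}''-f''\overline{g}''')$, the same test-function argument for $\lim_{x\to1^{-}}(1-x)f'''(x)=0$, the same CHEL application for $f'''\in L^{2}(0,1)$, and the same closed-symmetric-with-complete-eigenfunctions criterion for self-adjointness. One small caution: your parenthetical about deriving the Green identity ``starting from \eqref{V_2 IP}'' is misleading, since $\phi$ and $(\cdot,\cdot)_{2}$ are merely equivalent, not equal; the computation must be carried out directly in $\phi$, which on $S_{2}$ reduces to $\int_{-1}^{1}f''\overline{g}''(1+x)^{\beta+2}\,dx$ --- but since you state the correct target formula this is only a matter of wording.
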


The following theorem was shown in \cite[Theorem 11.1]{ELW}.

\begin{theorem}
Suppose $H$ is a Hilbert space with the orthogonal decomposition
\[
H=H_{1}\oplus H_{2},
\]
where $H_{1}$ and $H_{2}$ are closed subspaces of $H.$ Suppose $A_{1}%
:\mathcal{D}(A_{1})\subset H_{1}\rightarrow H_{1}$ and $A_{2}:\mathcal{D}%
(A_{2})\subset H_{2}\rightarrow H_{2}$ are self-adjoint operators in $H_{1}$
and $H_{2},$ respectively. For $f_{1}\in\mathcal{D}(A_{1})$ and $f_{2}%
\in\mathcal{D}(A_{2})$, write%
\[
f=f_{1}+f_{2},
\]
and let $A=A_{1}\oplus A_{2}:\mathcal{D}(A)\subset H\rightarrow H$ be the
operator defined by%
\begin{align*}
Af  &  =A_{1}f_{1}+A_{2}f_{2}\\
f\in\mathcal{D}(A) :  &  =\mathcal{D}(A_{1})\oplus\mathcal{D}(A_{2}).
\end{align*}
Then $A$ is self-adjoint in $H.$
\end{theorem}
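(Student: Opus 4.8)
The plan is to verify, in turn, the three properties that characterize self-adjointness of $A=A_{1}\oplus A_{2}$: that $\mathcal{D}(A)$ is dense, that $A$ is symmetric, and that $\mathcal{D}(A^{\ast})\subseteq\mathcal{D}(A)$. The one structural fact driving everything is that $H_{1}$ and $H_{2}$ are invariant under their respective pieces and mutually orthogonal, so that when a vector $f\in\mathcal{D}(A)$ is written as $f=f_{1}+f_{2}$ with $f_{i}\in\mathcal{D}(A_{i})\subset H_{i}$, we have $A_{i}f_{i}\in H_{i}$ and every ``cross'' inner product between an $H_{1}$-vector and an $H_{2}$-vector vanishes.

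First I would note that each $A_{i}$, being self-adjoint, is densely defined in $H_{i}$, so $\mathcal{D}(A_{1})\oplus\mathcal{D}(A_{2})$ is dense in $H_{1}\oplus H_{2}=H$; hence $A^{\ast}$ exists as a densely defined operator. For symmetry, take $f=f_{1}+f_{2}$ and $g=g_{1}+g_{2}$ in $\mathcal{D}(A)$ and expand $(Af,g)=(A_{1}f_{1}+A_{2}f_{2},\,g_{1}+g_{2})$; the terms $(A_{1}f_{1},g_{2})$ and $(A_{2}f_{2},g_{1})$ are zero by orthogonality of $H_{1}$ and $H_{2}$, so $(Af,g)=(A_{1}f_{1},g_{1})+(A_{2}f_{2},g_{2})$, and using the symmetry of $A_{1}$ and $A_{2}$ separately and then reassembling (again discarding the cross terms) gives $(Af,g)=(f,Ag)$. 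Thus $A\subseteq A^{\ast}$.

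The only step with any content is the reverse inclusion. Let $h\in\mathcal{D}(A^{\ast})$ and put $k:=A^{\ast}h$; write $h=h_{1}+h_{2}$ and $k=k_{1}+k_{2}$ with $h_{i},k_{i}\in H_{i}$. For arbitrary $u\in\mathcal{D}(A_{1})$, regarded as an element of $\mathcal{D}(A)$ with trivial second component, the defining identity $(Au,h)=(u,k)$ reads $(A_{1}u,h_{1})=(u,k_{1})$, since $A_{1}u\in H_{1}$ and $u\in H_{1}$; because this holds for all $u\in\mathcal{D}(A_{1})$, it says precisely that $h_{1}\in\mathcal{D}(A_{1}^{\ast})=\mathcal{D}(A_{1})$ with $A_{1}h_{1}=k_{1}$, and the same argument on $H_{2}$ yields $h_{2}\in\mathcal{D}(A_{2})$ with $A_{2}h_{2}=k_{2}$. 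Hence $h\in\mathcal{D}(A_{1})\oplus\mathcal{D}(A_{2})=\mathcal{D}(A)$ and $Ah=k=A^{\ast}h$, so $A^{\ast}\subseteq A$ and therefore $A=A^{\ast}$. I do not expect a genuine obstacle: the only thing requiring care is keeping track of which summand of $H$ each vector lives in, so that the orthogonality cancellations are applied legitimately. If one preferred, an alternative route is the standard criterion that a symmetric operator with $\mathrm{ran}(A\pm iI)=H$ is self-adjoint — here $\mathrm{ran}(A\pm iI)=\mathrm{ran}(A_{1}\pm iI)\oplus\mathrm{ran}(A_{2}\pm iI)=H_{1}\oplus H_{2}=H$ since each $A_{i}$ is self-adjoint — but the direct computation with adjoints above is elementary enough to be preferable.
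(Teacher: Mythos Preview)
Your argument is correct and is the standard direct proof: density of the domain, symmetry via orthogonality killing the cross terms, and then the adjoint inclusion obtained by testing against vectors in one summand at a time. Note, however, that the paper does not supply its own proof of this statement; it simply quotes the result from \cite[Theorem~11.1]{ELW}, so there is nothing in the present paper to compare your approach against. Your proof (either the adjoint computation or the alternative $\mathrm{ran}(A\pm iI)=H$ route you mention) would be a perfectly acceptable substitute for that citation.
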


We remark that if these operators $A_{1}$ and $A_{2}$ are both generated by,
say, a linear differential expression $m[\cdot]$, then so is $A=A_{1}\oplus
A_{2}.$ Indeed, if $f=f_{1}+f_{2}\in\mathcal{D}(A_{1})\oplus\mathcal{D}%
(A_{2}),$ then
\[
Af=A_{1}f_{1}+A_{2}f_{2}=m[f_{1}]+m[f_{2}]=m[f_{1}+f_{2}]=m[f].
\]
We are now in position to state the main result of this section.

\begin{theorem}
\label{Self-Adjoint Operator in S} Let $T_{1}$ and $T_{2}$ be the self-adjoint
operators defined in, respectively, \eqref{Operator T_1} and
\eqref{T_2 definition}. Let $T:\mathcal{D}(T)\subset S\rightarrow S$ be the
operator defined by%
\begin{align*}
T  &  =T_{1}\oplus T_{2}\\
\mathcal{D}(T):  &  =\mathcal{D}(T_{1})\oplus\mathcal{D}(T_{1}).
\end{align*}
Then $T$ is a self-adjoint operator, generated by the Jacobi differential
expression $m_{-2,\beta}[\cdot],$ in the Sobolev space $S.$ The Jacobi
polynomials $\left\{  P_{n}^{(-2,\beta)}\right\}  _{n=0}^{\infty}$ form a
complete set of eigenfunctions of $T.$ The spectrum of $T$ is discrete and
given specifically by $\sigma(T)=$ $\{n^{2}+(\beta-1)n+1\mid n\in
\mathbb{N}_{0}\}.$\hfill
\end{theorem}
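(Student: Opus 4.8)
The plan is to assemble the result from the three self-adjointness statements already in hand, namely the self-adjointness of $T_{1}$ in $S_{1}$ (trivial, since $S_{1}$ is two-dimensional), the self-adjointness of $T_{2}$ in $S_{2}$ (Theorem~\ref{selfadjoint}), and the orthogonal decomposition $S = S_{1}\oplus S_{2}$. First I would invoke the direct-sum theorem quoted just above (from \cite[Theorem 11.1]{ELW}): with $H = S$, $H_{1} = S_{1}$, $H_{2} = S_{2}$, $A_{1} = T_{1}$, and $A_{2} = T_{2}$, all hypotheses are met — $S_{1}$ and $S_{2}$ are closed subspaces of $S$ whose orthogonal direct sum is $S$, and $T_{1}$, $T_{2}$ are self-adjoint in their respective spaces — so the operator $T = T_{1}\oplus T_{2}$ with $\mathcal{D}(T) = \mathcal{D}(T_{1})\oplus\mathcal{D}(T_{2})$ is self-adjoint in $S$.

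Next I would check that $T$ is generated by the single differential expression $m_{-2,\beta}[\cdot]$. This is exactly the remark recorded after the direct-sum theorem: both $T_{1}$ and $T_{2}$ act as $m_{-2,\beta}[\cdot]$ on their domains, so for $f = f_{1}+f_{2}\in\mathcal{D}(T_{1})\oplus\mathcal{D}(T_{2})$ one has $Tf = T_{1}f_{1}+T_{2}f_{2} = m_{-2,\beta}[f_{1}]+m_{-2,\beta}[f_{2}] = m_{-2,\beta}[f]$ by linearity of the expression.

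For the eigenfunctions and spectrum, I would note that $\left\{P_{n}^{(-2,\beta)}\right\}_{n=0}^{1}$ is a complete orthogonal set in $S_{1}$ (by definition of $S_{1}$ as their span) and consists of eigenfunctions of $T_{1}$ with eigenvalues $\lambda_{n} = n^{2}+(\beta-1)n+1$ for $n=0,1$; meanwhile $\left\{P_{n}^{(-2,\beta)}\right\}_{n=2}^{\infty}$ is a complete orthogonal set of eigenfunctions of $T_{2}$ in $S_{2}$ with eigenvalues $\lambda_{n}$ for $n\geq 2$, by Theorem~\ref{selfadjoint}(c). Since $S = S_{1}\oplus S_{2}$ orthogonally and the two pieces of the polynomial sequence together span a dense subspace of $S$ (Theorem~\ref{dense}), the full sequence $\left\{P_{n}^{(-2,\beta)}\right\}_{n=0}^{\infty}$ is a complete orthogonal set of eigenfunctions of $T$. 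The spectrum is then the union $\sigma(T_{1})\cup\sigma(T_{2}) = \{n^{2}+(\beta-1)n+1\mid n\in\mathbb{N}_{0}\}$, and it is discrete because each summand has discrete spectrum consisting of isolated simple eigenvalues (one should note in passing that the quadratic $n\mapsto n^{2}+(\beta-1)n+1$ is injective on $\mathbb{N}_{0}$ for $\beta>-1$, $\beta\neq 0$, so no eigenvalue coincidences occur across the two blocks, keeping the eigenvalues simple).

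There is no real obstacle here: every ingredient has been established in the preceding sections, and this final theorem is purely a matter of correctly citing the direct-sum machinery and bookkeeping the eigenfunction/spectrum data. If anything deserves a sentence of care, it is the verification that the eigenvalue map $n\mapsto\lambda_{n}$ does not accidentally identify an index $n\in\{0,1\}$ with some $n'\geq 2$ — a one-line computation with the discriminant — so that the eigenvalues of $T$ remain simple.
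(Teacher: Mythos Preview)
Your proposal is correct and follows exactly the route the paper intends: the theorem is stated without proof because it is meant to follow immediately from the direct-sum theorem just quoted, the remark after it on linear differential expressions, and the already-established spectral data for $T_{1}$ and $T_{2}$. Your additional check that $n\mapsto n^{2}+(\beta-1)n+1$ is injective on $\mathbb{N}_{0}$ (hence the eigenvalues remain simple) is a nice touch the paper omits.
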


\end{document}